%% file: main-sphere.tex
\documentclass[a4paper,12pt]{amsart}
\usepackage{lipsum}
\usepackage{amssymb,amsthm}
\usepackage[foot]{amsaddr}
\usepackage{amsmath}
\usepackage{tikz}
\usetikzlibrary{quotes,angles}
\usepackage{graphicx}
\usepackage{subcaption}
\usepackage[shortlabels]{enumitem}
\usetikzlibrary{arrows}
\usepackage{ulem}
\usepackage{float}
\usepackage{graphicx}
\usepackage{subcaption}
\usepackage{hyperref}
\usepackage[section]{placeins}
\graphicspath{{sphere_figs/}}
\usetikzlibrary{decorations.pathmorphing}
\tikzset{snake it/.style={decorate, decoration=snake}}
\usepackage{pgfplots}

\usepackage{xspace}%

\usepackage{ulem} 
\pgfplotsset{compat=1.18}
\makeatletter
\newcommand*{\rom}[1]{\expandafter\@slowromancap\romannumeral #1@}
\makeatother

\numberwithin{equation}{section}

\theoremstyle{plain}
\newtheorem{theorem}{Theorem}
\numberwithin{theorem}{section}
\newtheorem{proposition}[theorem]{Proposition}
\newtheorem{lemma}[theorem]{Lemma}
\newtheorem{corollary}[theorem]{Corollary}
\theoremstyle{definition}
\newtheorem{definition}[theorem]{Definition}
\theoremstyle{remark}
\newtheorem{remark}[theorem]{Remark}
\newtheorem{example}[theorem]{Example}
\theoremstyle{remark}

\theoremstyle{remark}

\newcommand{\eps}{\epsilon}
\newcommand{\smo}{\setminus \mathbf{0}}
\newcommand{\zero}{\mathbf{0}}

\newcommand{\Norm}[1]{\left\lVert#1\right\rVert}      
\newcommand{\norm}[1]{\left\lvert#1\right\rvert}      

\newcommand{\abs}[1]{\left|#1\right|}                 
\newcommand{\paren}[1]{\left(#1\right)}               
\newcommand{\sparen}[1]{\left\{#1\right\}}      

\renewcommand{\d}{\,\mathrm{d}}  

\newcommand{\dd}{\mathrm{d}}  

\newcommand{\supp}{\operatorname{supp}} 
\newcommand{\vp}{\varphi}

\newcommand{\Ac}{\mathcal{A}}

\newcommand{\Cc}{\mathcal{C}}

\newcommand{\Dc}{\mathcal{D}}
\newcommand{\Ec}{\mathcal{E}}
\newcommand{\Fc}{\mathcal{F}}

\newcommand{\Nc}{\mathcal{N}}

\newcommand{\Pc}{\mathcal{P}}

\newcommand{\Sc}{\mathcal{S}}

\newcommand{\WF}{\mathrm{WF}}                         
\newcommand{\wf}{\mathrm{WF}}                         

\newcommand{\partyf}[2]{\frac{\partial #2}{\partial y_{#1}}}

\newcommand{\vv}{{\mathbf{v}}}

\newcommand{\bpm}{\begin{pmatrix}}
\newcommand{\epm}{\end{pmatrix}}

\newcommand{\vx}{{\mathbf{x}}}
\newcommand{\vxo}{\mathbf{x}_0}
\newcommand{\xio}{\xi_0}

\newcommand{\vy}{{\mathbf{y}}}
\newcommand{\vz}{{\mathbf{z}}}
\newcommand{\vs}{\mathbf{s}}
\newcommand{\vd}{\mathbf{d}}

\newcommand{\vxi}{{\boldsymbol{\xi}}}
\newcommand{\vxio}{{\boldsymbol{\xi}_0}}

\newcommand{\om}{\omega}

\newcommand{\vsig}{{\boldsymbol{\sigma}}} 

\newcommand{\snm}{S^{n-1}}

\newcommand{\ip}[1]{\left\langle {#1}\right\rangle}
\newcommand{\nablay}{\nabla_\vy}

\newcommand{\rfr}{\mathfrak{r}}

\newcommand{\psido}{$\Psi$do\xspace}

\newcommand{\rr}{{{\mathbb R}}}

\newcommand{\rtwo}{{{\mathbb R}^2}}
\newcommand{\rthree}{{{\mathbb R}^3}}
\newcommand{\rn}{{{\mathbb R}^n}}

\newcommand{\hvx}{\hat{\vx}}
\newcommand{\hxi}{\hat{\xi}}
\newcommand{\hlambda}{\hat{\lambda}}
\newcommand{\homega}{\hat{\omega}}
\newcommand{\hsigma}{\hat{\sigma}}

\newcommand{\hAc}{\widehat{\mathcal{A}}}

\newcommand{\CcO}{\Cc_\Omega}
\newcommand{\YO}{Y_\Omega}

\newcommand{\dR}{\dot{\mathbb{R}}}
\newcommand{\drn}{\dot{\mathbb{R}^n}}

\newcommand{\naby}{\nabla_\vy}

\newcommand{\st}{\hskip 0.3mm : \hskip 0.3mm}

\newcommand{\be}{\begin{equation}}
\newcommand{\bea}{\begin{eqnarray}}
\newcommand{\eea}{\end{eqnarray}}
\newcommand{\bean}{\begin{eqnarray*}}
\newcommand{\eean}{\end{eqnarray*}}

\newcommand{\bel}[1]{\begin{equation}\label{#1}}
\newcommand{\ee}{\end{equation}}
\newcommand{\eel}[1]{{\label{#1}\end{equation}}}
\newcommand{\intt}{{\operatorname{int}}}
\newcommand{\cl}{{\operatorname{cl}}}
\newcommand{\bd}{{\operatorname{bd}}}

\DeclareMathOperator*{\argmin}{arg\,min}

\usepackage{fullpage}

\usepackage{kbordermatrix}
\renewcommand{\kbldelim}{(}
\renewcommand{\kbrdelim}{)}

\usepackage{datetime}
\usetikzlibrary{quotes, angles}

\newcommand\irregularcircle[2]{
  \pgfextra {\pgfmathsetmacro\len{(#1)+rand*(#2)}}
  +(0:\len pt)
  \foreach \a in {10,20,...,350}{
    \pgfextra {\pgfmathsetmacro\len{(#1)+rand*(#2)}}
    -- +(\a:\len pt)
  } -- cycle
}

\title[short]{Spherical Radon transforms with smoothly varying radii\\{\footnotesize\ddmmyyyydate\today~\currenttime}}
\author{James W. Webber\textsuperscript{$\dagger$}}
\author{Eric Todd Quinto$^\ddagger$}
\address[James W. Webber (corresponding author)]{Cleveland Clinic Lerner College of Medicine of Case Western Reserve University School of Medicine, EC-10 Cleveland Clinic, 9501 Euclid Ave, Cleveland, OH 44195}
\address[Eric Todd Quinto]{Department of Mathematics, Tufts
University, 177 College Ave, Medford, MA 02155}
\email[A1,A2]{jwebber5@bwh.harvard.edu\textsuperscript{$\dagger$}, todd.quinto@tufts.edu\textsuperscript{$\ddagger$}}

\providecommand{\keywords}[1]
{
  \small	
  \textbf{\textit{Keywords---}} #1
}

\begin{document}

\begin{abstract}
We present an analysis of a novel spherical Radon transform, $R$,  which defines the integrals of a function, $f$, in $\mathbb{R}^n$ over spheres with arbitrary center ($\vy$) and radii, $r(\vy)$, which vary smoothly with $\vy$. We first establish sufficient and necessary conditions on $r$ and $\text{supp}(f)$ so that $R$ satisfies the Bolker condition, and further conditions which allow $f$ to be recovered stably from $Rf$. We then apply this theory to a number of example applications in Compton Scatter Tomography (CST) and Ultrasound Reflection Tomography (URT). For each application considered, we also provide injectivity proofs and explicit inversion formulae, some of which are based on the generalized theory presented by Palamodov \cite{palamodov2012uniform}. We then combine our microlocal theory and injectivity results to prove stability estimates for our transforms. In addition, to validate our theory, we provide simulated image reconstructions.
\end{abstract}
\maketitle

\keywords{{\it{\textbf{Keywords}}}} - spherical Radon transforms, inversion methods, microlocal analysis


\input{introduction.tex}

\input{definitions.tex}
\input{bolker.tex}

\input{example4.1.tex}

\input{example4.2-v2.tex}

\input{example4.3.tex}

\input{images_new.tex}
\input{conclusion.tex}

\section*{Acknowledgements:} 
The authors thank Jan Boman for the proof of Proposition
\ref{prop:real-valued}. The first author wishes to acknowledge funding
support from Brigham Ovarian Cancer Research Fund, The V Foundation,
Abcam Inc.,  Aspira Women's Health, The Honorable Tina Brozman Foundation, The Cleveland Clinic Foundation, and The National Cancer Institute R03CA283252-01. The second author thanks the
Simons Foundation for grant 708556 which partially supported this
research. 

\bibliographystyle{abbrv} 
\bibliography{RefRevolution}

\end{document}

%% file: introduction.tex
\section{Introduction} In this paper, we present microlocal and
injectivity analyses of a new spherical Radon transform, $R$, which
integrates a function, $f \in L^2(\mathbb{R}^n)$, of compact support
over spheres with arbitrary center, $\vy \in \mathbb{R}^n$, and
radius, $r(\vy) \in C^{\infty}(\mathbb{R}^n)$, which varies smoothly
with $\vy$. In particular, we provide simple to compute conditions of
$r$ so that $R$ satisfies the Bolker condition, and we give further
geometric requirements which allow for stable inversion of $R$.
Spherical Radon transforms have been covered extensively in the
literature \cite{andersson1988determination, kunyansky2007explicit,
rubin2002inversion, haltmeier2017spherical, Nguyen-Pham, Q2006:supp,
klein2003inverting, ambartsoumian2010inversion, SU:SAR2013, Caday:SAR,
Co1963, KrQu2011, AFKNQ:common-midpoint, felea2013microlocal,
agranovsky1996injectivity}. To the best of our knowledge, however, we
do not believe the proposed transform has been addressed from a
microlocal perspective. For example, much of the literature focuses on
geometries where the $\vy$ are constrained to an $(n-1)$-dimensional
surface, and $r$ varies independently of $\vy$, e.g., see
\cite{Nguyen-Pham}. Special cases of our geometry have been studied
\cite{norton,cormack1980radon}, e.g., in those papers, using our
notation, $r(\vy) = |\vy|$ although $r$ is not smooth at $\zero$
(Example \ref{ex:CQ} provides a closely related transform that does
satisfy most of our theory). However, the general case is not covered.
We aim to address this here.

In \cite{Nguyen-Pham}, a microlocal analysis of a spherical transform, $\mathcal{R}$, is considered. In that paper, the sphere centers lie on a smooth $(n-1)$-dimensional surface, $\mathcal{S}$, in $\mathbb{R}^n$, and the sphere radii are unconstrained. It is proven that $\mathcal{R}$ is a Fourier Integral Operator (FIO) away from $\mathcal{S}$ and that the left projection of $\mathcal{R}$, $\Pi_L$, drops rank on hyperplanes tangent to $\mathcal{S}$. The authors also show that artifacts exist in the reconstruction (e.g., using filtered backprojection), and these are generated by reflections through planes tangent to $\mathcal{S}$. They go one to consider examples of non-smooth $\mathcal{S}$ (e.g., surfaces with corners) and identify the image artifacts. This theory, in the case of smooth $\mathcal{S}$, was later generalized to ellipsoid an hyperboloid integral surfaces in \cite{webber2023ellipsoidal}.

In \cite{agranovsky1996injectivity} microlocal analysis is used to prove injectivity results for a spherical Radon transform in $\mathbb{R}^2$. In this paper, the sphere radii, $r$, vary freely and independently of $\vy$, and the authors identify injectivity sets for the $\vy$. It is shown using analytic wavefront set theory that the solution is unique as long as the set of sphere centers is not constrained to either a finite set or a special union of lines (Coxeter system) through the origin. In \cite{Q2006:supp}, injectivity proofs are presented for a Radon transform which integrates a function over spheres with centers on a smooth surface, $S$, in $\mathbb{R}^n$. More specifically, the authors provide sufficient conditions on $S$ (e.g., $S$ is a real analytic surface) and $\text{supp}(f)$ so that the solution is unique. In this paper, similar to \cite{agranovsky1996injectivity} , the sphere radii vary independently of $\vy$. In section \ref{examples}, we present injectivity proofs for $R$ for some specific example cases of interest in CST and URT. It is noted that the theory of \cite{agranovsky1996injectivity} and \cite{Q2006:supp} does not apply here since in our case, $r(\vy)$ is dependent on $\vy$.

Spherical Radon transforms and microlocal analysis have been applied extensively in Synthetic Aperture Radar (SAR) \cite{SU:SAR2013, Caday:SAR, Co1963, KrQu2011, AFKNQ:common-midpoint, felea2013microlocal}. In SAR, the set of $\vy$ (which represent a plane flight path) is usually constrained to a 1-D curve in $\mathbb{R}^2$. For example, in \cite{SU:SAR2013}, the authors compare curved and straight scanning paths, $\gamma$, from a microlocal perspective. Specifically, the authors identify mirror point type artifacts in the reconstruction, which happen through lines tangent to $\gamma$, making $\WF(f)$ hard to identify. They compare the artifacts with differing $\gamma$ and show that it is possible for the singularities of $f$ to cancel in the reconstruction, for both curved and straight flight paths. However, in the special case when $\gamma$ is the boundary of a convex domain, $\Omega$, and $\text{supp}(f) \subset \Omega$, the authors prove that $\WF(f)$ can be recovered. In these works, the sphere radii vary independently of $\vy$. Thus, such SAR applications and microlocal theory does not fit our framework. 

In section \ref{examples}, we present multiple applications of our theory to CST and URT. In particular, we introduce a novel rotational scanning modality in CST. This geometry is of note as we are able to prove that the reconstruction is stable in Sobolev space. In the CST literature, a number of scanning geometries have been proposed \cite{norton, truong2019compton, RigaudComptonSIIMS2017, webber2019compton, cebeiro2021three}. In those papers, there is inversion instability due to, e.g., limited wavefront coverage (i.e., not all the image edges are detectable) or additional singularities appearing in the reconstruction (artifacts) due to failure of the Bolker condition. For example, in \cite{norton} a Radon transform, $\mathcal{I}$, is considered which integrates $f$ over all spheres which pass through the origin. Here, while the Bolker condition is satisfied for $\text{supp}(f)$ bounded away from the origin, not all wavefronts of $f$ are detectable in the data, e.g., elements of the form $(\vx,\vx^{\perp}) \in \WF(f)$, where $\vx^{\perp}$ satisfies $\vx^{\perp}\cdot \vx = 0$. To the best of our knowledge, we present here the first scanning geometry in CST which yields a stable solution, and we formalize this idea using Sobolev spaces. We also provide explicit, closed-form inversion formulae using the generalized theory of \cite{palamodov2012uniform}. Thus, the proposed geometry, from a purely theoretical perspective (e.g., not accounting for physical effects), is optimized for stability when compared with other modalities from the literature.

In addition to CST application, we will also explore the case when
$r(\vy) = r$ is constant, which may have applications in URT where
spherical waves are used for imaging. Here, $r(\vy) = r$ represents
the scanning depth, and $\vy$ is the location of the sound wave
emitter/receiver. Similar examples have been explored previously in
the literature \cite[Chapter VI]{john2004plane}. Indeed, if the $\vy$ are
allowed to vary freely in $\mathbb{R}^n$, then the recovery of $f$
from $Rf$ in this case is a deconvolution problem. More detailed
support theorems can be found in \cite{agranovsky2011support},
although in that paper the authors consider spheres with centers on $\mathbb{R}^n\backslash K$, where $K$ is a compact region of interest. With this in mind, we provide more limited sphere center sets (represented by a subset $\Omega \subset \mathbb{R}^n$) which are sufficient for stable reconstruction, given a scanning target size. Using spherical harmonic decomposition and Volterra integral equations, we also provide novel injectivity proofs and inversion methods for $R$ in $n$ dimensions when the $\vy$ are constrained to a subset of $\{|\vy| > r\}$ and $\text{supp}(f) \subset \{|\vx| < r\}$. In particular, we only require that the $\vy$ be on one side of the scanning region (full $360^{\circ}$ scanning is not needed) which may be beneficial in some applications, e.g., in URT using a handheld scanner.

The remainder of this paper is organized as follows. In section \ref{sect:defns}, we recite some key definitions and prior theorems that will be used in our proofs. In section \ref{bolker}, we present our main microlocal theorems, and we provide necessary and sufficient conditions on $r(\vy)$ so that $R$ satisfies the Bolker condition. We then use this theory to derive conditions for stable reconstruction. In section \ref{examples}, we present a number of example applications of our analysis to CST and URT. Here, injectivity proofs and global Sobolev space estimates are also provided. In section \ref{images}, we present simulated image reconstructions to validate our theory. In particular, we focus on the scanning geometries related to CST and URT proposed in section \ref{examples}.

%% file: definitions.tex
\section{Definitions}\label{sect:defns}

In this section, we review some theory from microlocal analysis which will be used in our theorems. We first provide some
notation and definitions.  Let $X$ and $Y$ be open subsets of
{$\mathbb{R}^{n_X}$ and $\mathbb{R}^{n_Y}$, respectively.}  Let $\Dc(X)$ be the space of smooth functions compactly
supported on $X$ with the standard topology and let $\mathcal{D}'(X)$
denote its dual space, the vector space of distributions on $X$.  Let
$\Ec(X)$ be the space of all smooth functions on $X$ with the standard
topology and let $\mathcal{E}'(X)$ denote its dual space, the vector
space of distributions with compact support contained in $X$. Finally,
let $\Sc(\rn)$ be the space of Schwartz functions, that are rapidly
decreasing at $\infty$ along with all derivatives. See \cite{Rudin:FA}
for more information. 

We now list some notation conventions that will be used throughout this paper:
\begin{enumerate}
\item If $A\subset \rn$, then $\intt(A)$ and $\bd(A)$ are, respectively, the interior of $A$ and the boundary of $A$.
\item For a function $f$ in the Schwartz space $\Sc(\mathbb{R}^{n_X})$, we {write
\[
\mathcal{F}f(\xi) = \int_{\mathbb{R}^{n_X}} e^{-i x\cdot \xi}f(x)\ \dd x,\quad \mathcal{F}^{-1}f(x) = \frac{1}{(2 \pi)^{n_X}}\int_{\mathbb{R}^{n_X}} e^{i x\cdot \xi}f(\xi)\ \dd \xi
\]
for} the Fourier transform and inverse Fourier transform of $f$,
respectively, {and extend these in the usual way to tempered distributions $\Sc'(\mathbb{R}^{n_X})$} (see \cite[Definition 7.1.1]{hormanderI}). 

\item We use the standard  multi-index notation: if
$\alpha=(\alpha_1,\dots,\alpha_n)\in \sparen{0,1,2,\dots}^{n_X}$
is a multi-index and $f$ is a function on $\mathbb{R}^{n_X}$, then
\[\partial^\alpha f=\paren{\frac{\partial}{\partial
x_1}}^{\alpha_1}\paren{\frac{\partial}{\partial
x_2}}^{\alpha_2}\cdots\paren{\frac{\partial}{\partial x_{n_X}}}^{\alpha_{n_X}}
f.\] If $f$ is a function of $(\vy,\vx,\vsig)$ then $\partial^\alpha_\vy
f$ and $\partial^\alpha_\vsig f$ are defined similarly.

\item \label{item:T*Xident} We identify the cotangent
spaces of Euclidean spaces with the underlying Euclidean spaces. For example, the cotangent space, 
$T^*(X)$, of $X$ is identified with $X\times \mathbb{R}^{n_X}$. If $\Phi$ is a function of $(\vy,\vx,\vsig)\in Y\times X\times \rr^N$,
then we define $\dd_{\vy} \Phi = \paren{\partyf{1}{\Phi},
\partyf{2}{\Phi}, \cdots, \partyf{{n_X}}{\Phi} }$, and $\dd_\vx\Phi$ and $
\dd_{\vsig} \Phi $ are defined similarly. Identifying the cotangent space with the Euclidean space as mentioned above, we let $\dd\Phi =
\paren{\dd_{\vy} \Phi, \dd_{\vx} \Phi,\dd_{\vsig} \Phi}$.

\item For $\Omega\subset \rr^m$, we define $\dot{\Omega}
= \Omega\smo$.  When we write $T^*(X)\smo$, we mean the cotangent
space minus its zero section, $X\times \{\zero\}$. 

\end{enumerate}

\noindent The singularities of a function and the directions in which they occur
are described by the wavefront set \cite[page
16]{duistermaat1996fourier}, which we now define.
\begin{definition}
\label{WF} Let $X$ be an open subset of $\mathbb{R}^{n_X}$ and let $f$ be a
distribution in $\mathcal{D}'(X)$.  Let $(\vx_0,\vxi_0)\in X\times
\drn$.  Then $f$ is \emph{smooth at $\vx_0$ in direction $\vxio$} if
there exists a neighborhood $U$ of $\vx_0$ and $V$ of $\vxi_0$ such
that for every $\Phi\in \Dc(U)$ and $N\in\mathbb{R}$ there exists a
constant $C_N$ such that for all $\vxi\in V$ {and $\lambda >1$},
\begin{equation}\label{rapid decay}
\left|\Fc(\Phi f)(\lambda\vxi)\right|\leq C_N(1+\abs{\lambda})^{-N}.
\end{equation}
The pair $(\vx_0,\vxio)$ is in the \emph{wavefront set,} $\wf(f)$, if
$f$ is not smooth at $\vx_0$ in direction $\vxio$.
\end{definition}

Intuitively, the elements 
$(\vxo,\vxio)\in \WF(f)$ are the point-normal vector pairs at
which $f$ has singularities; $\vxo$ is the location of the 
  singularity, and  $\vxio$ is the direction in which the
  singularity occurs.
  A
 geometric example of the wavefront set is given by the characteristic function $f$ of a domain $\Omega \subset \mathbb{R}^{n_X}$ with smooth boundary, which is $1$ on $\Omega$ and $0$ on $\mathbb{R}^{n_X}\backslash\Omega$. Then the wavefront set is
\[
\WF(f) = \{(\vx,t\vv) \ : t \neq 0, \ \vx \in \partial \Omega, \ \mbox{$\vv$ is orthogonal to $\partial \Omega$ at $\vx$}\}.
\]
In other words, the wavefront set is the set of points in the boundary of $\Omega$ together with the nonzero normal vectors to the boundary. {The set of normals of the surface $\partial \Omega$ is a subset of the cotangent bundle $T^*X$, and here we are using the identification of $T^*X$ with $X \times \mathbb{R}^{n_X}$ mentioned above in point (\ref{item:T*Xident}).} The wavefront set is an important consideration in imaging since elements of the wavefront set will correspond to sharp features of an image.





Our next proposition is elementary but useful.


\begin{proposition}\label{prop:real-valued} Let $f$ be a real-valued
distribution, $\vxo\in\rn$, $\xio\in \rn\smo$.  Then, $(\vxo,\xio)\in
\wf(f)$ if and only if $(\vxo,-\xio)\in
\wf(f)$.\end{proposition}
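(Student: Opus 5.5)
The argument rests on one conjugation identity: for a real-valued distribution the Fourier transform satisfies $\overline{\Fc(g)(\vxi)}=\Fc(g)(-\vxi)$. We show that smoothness of $f$ at $\vxo$ in direction $\xio$, in the sense of Definition \ref{WF}, is equivalent to smoothness at $\vxo$ in direction $-\xio$. Taking contrapositives then gives the statement. By symmetry (replace $\xio$ by $-\xio$) only one implication needs proof.

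Assume $f$ is smooth at $\vxo$ in direction $\xio$. Then there are neighborhoods $U$ of $\vxo$ and $V$ of $\xio$ such that for every $\Phi\in\Dc(U)$ the decay estimate \eqref{rapid decay} holds uniformly for $\vxi\in V$ and $\lambda>1$. We take the same $U$ and the reflected neighborhood $-V$ of $-\xio$, and fix $\Phi\in\Dc(U)$. Since $\Phi$ may be complex-valued, we split it as $\Phi=\Phi_1+i\Phi_2$ with $\Phi_1,\Phi_2\in\Dc(U)$ real-valued. Then $\Phi_j f$ is a real-valued compactly supported distribution.

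For real-valued $g\in\Ec'(\rn)$ we have $\Fc(g)(\vxi)=g(e^{-i\vx\cdot\vxi})$, the distribution applied to a smooth function. Real-valuedness gives $\overline{g(\psi)}=g(\bar\psi)$, and hence
\[
\Fc(g)(-\vxi)=\overline{\Fc(g)(\vxi)} .
\]
Applying this with $g=\Phi_j f$, for $\vxi\in V$ and $\lambda>1$ we get
\[
\abs{\Fc(\Phi_j f)(-\lambda\vxi)}=\abs{\Fc(\Phi_j f)(\lambda\vxi)}\leq C_{N,j}(1+\lambda)^{-N},
\]
where the bound comes from the hypothesis applied to $\Phi_j\in\Dc(U)$. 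Linearity, $\Fc(\Phi f)=\Fc(\Phi_1 f)+i\,\Fc(\Phi_2 f)$, and the triangle inequality then give the required decay of $\Fc(\Phi f)(\lambda\boldsymbol{\eta})$ for all $\boldsymbol{\eta}\in -V$, with constant $C_{N,1}+C_{N,2}$. So $f$ is smooth at $\vxo$ in direction $-\xio$.

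The only point needing care is the conjugation identity for distributions rather than functions. It follows from the definition of a real-valued distribution ($f(\bar\psi)=\overline{f(\psi)}$ for all test functions $\psi$) together with the fact that the Fourier transform of a compactly supported distribution is the pairing with $e^{-i\vx\cdot\vxi}$. The splitting $\Phi=\Phi_1+i\Phi_2$ is what lets us apply the identity even though the cutoff $\Phi$ in Definition \ref{WF} is allowed to be complex.
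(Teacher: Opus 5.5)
Your proof is correct and follows the same route as the paper: the identity $\overline{\Fc(g)(\xi)}=\Fc(g)(-\xi)$ for real-valued $g\in\Ec'(\rn)$, applied to the localized distributions in the estimate \eqref{rapid decay}. Your splitting $\Phi=\Phi_1+i\Phi_2$ is a worthwhile extra detail, since the paper applies the identity to $\Phi f$ without noting that $\Phi f$ is real-valued only when the cutoff $\Phi$ is real.
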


\begin{proof}
Let $g$ be a real-valued distribution of compact support on $\rn$.
Then for $\xi\in \rn$, the complex conjugate of $\Fc(g)(\xi)$ is
$\Fc(g)(-\lambda\xi)$ by a simple calculation since $g$ is real. By
applying this to the estimate \eqref{rapid decay} for a distribution
$f$, one sees $f$ is smooth at $\vxo$ in direction $\xio$ if and only if
$f$ is smooth at $\vxo$ in direction $-\xio$.
\end{proof}


 \begin{definition}[{\cite[Definition 7.8.1]{hormanderI}}] \label{ellip}We define
 $S^m(Y \times X, \mathbb{R}^N)$ to be the
set of $a\in \Ec(Y\times X\times \mathbb{R}^N)$ such that for every
compact set $K\subset Y\times X$ and all multi--indices $\alpha,
\beta, \gamma$ the bound
\[
\left|\partial^{\gamma}_{\vy}\partial^{\beta}_{\vx}\partial^{\alpha}_{\vsig}a(\vy,\vx,\vsig)\right|\leq
C_{K,\alpha,\beta,\gamma}(1+\norm{\vsig})^{m-|\alpha|},\ \ \ (\vy,\vx)\in K,\
\vsig\in\mathbb{R}^N,
\]
holds for some constant $C_{K,\alpha,\beta,\gamma}>0$. 

 The elements of $S^m$ are called \emph{symbols} of order $m$.  Note
that this symbol class is  sometimes denoted $S^m_{1,0}$.  The symbol
$a\in S^m(Y \times X,\rr^N)$ is \emph{elliptic} if for each compact set
$K\subset Y\times X$, there is a $C_K>0$ and $M>0$ such that
\bel{def:elliptic} \abs{a(\vy,\vx,\vsig)}\geq C_K(1+\norm{\vsig})^m,\
\ \ (\vy,\vx)\in K,\ \norm{\vsig}\geq M.
\ee 
\end{definition}

\begin{definition}[{\cite[Definition
        21.2.15]{hormanderIII}}] \label{phasedef}
A function $\Phi=\Phi(\vy,\vx,\vsig)\in
\Ec(Y\times X\times(\mathbb{R}^N\smo))$ is a \emph{phase
function} if $\Phi(\vy,\vx,\lambda\vsig)=\lambda\Phi(\vy,\vx,\vsig)$, $\forall
\lambda>0$ and $\mathrm{d}\Phi$ is nowhere zero. The
\emph{critical set of $\Phi$} is
\bel{def:SPhi}\Sigma_\Phi=\{(\vy,\vx,\vsig)\in Y\times X\times(\mathbb{R}^N\smo)
: \dd_{\vsig}\Phi=0\}.\ee 
 A phase function is
\emph{clean} if the critical set $\Sigma_\Phi$ is a smooth manifold {with tangent space defined {by} the kernel of $\mathrm{d}\,(\mathrm{d}_\sigma\Phi)$ on $\Sigma_\Phi$. Here, the derivative $\mathrm{d}$ is applied component-wise to the vector-valued function $\mathrm{d}_\sigma\Phi$. So, $\mathrm{d}\,(\mathrm{d}_\sigma\Phi)$ is treated as a Jacobian matrix of dimensions $N\times ({n_Y + n_X}+N)$.}
\end{definition}
\noindent By the {Constant Rank Theorem} \cite[Theorem 4.12]{lee2012smooth} the requirement for a phase
function to be clean is satisfied if
$\mathrm{d}\paren{\mathrm{d}_\vsig
\Phi}$ has constant rank.

\begin{definition}[{\cite[Definition 21.2.15]{hormanderIII} and
      \cite[section 25.2]{hormander}}]\label{def:canon} Let $X$ and
$Y$ be open subsets of $\rn$. Let $\Phi\in \Ec\paren{Y \times X \times
{\rr}^N}$ be a clean phase function.  In addition, we assume that
$\Phi$ is \emph{nondegenerate} in the following sense:
\[\text{$\dd_{\vy}\Phi$ and $\dd_{\vx}\Phi$ are never zero on
$\Sigma_{\Phi}$.}\]
  The
\emph{canonical relation parametrized by $\Phi$} is defined as
\begin{equation}\label{def:Cgenl} \begin{aligned} \Cc=&\sparen{
\paren{\paren{\vy,\dd_{\vy}\Phi(\vy,\vx,\vsig)};\paren{\vx,-\dd_{\vx}\Phi(\vy,\vx,\vsig)}}:(\vy,\vx,\vsig)\in
\Sigma_{\Phi}}{.}
\end{aligned}
\end{equation}
\end{definition}

\begin{definition}\label{FIOdef}
Let $X$ and $Y$ be open subsets of {$\mathbb{R}^{n_X}$ and $\mathbb{R}^{n_Y}$, respectively.} {Let an operator $A :
\Dc(X)\to \mathcal{D}'(Y)$ be defined by the distribution kernel
$K_A\in \mathcal{D}'(Y\times X)$, in the sense that
$Af(\vy)=\int_{X}K_A(\vy,\vx)f(\vx)\mathrm{d}\vx$. Then we call $K_A$
the \emph{Schwartz kernel} of $A$}. A \emph{Fourier
integral operator (FIO)} of order $m + N/2 - (n_X+n_Y)/4$ is an operator
$A:\Dc(X)\to \mathcal{D}'(Y)$ with Schwartz kernel given by an
oscillatory integral of the form
\begin{equation} \label{oscint}
K_A(\vy,\vx)=\int_{\mathbb{R}^N}
e^{i\Phi(\vy,\vx,\vsig)}a(\vy,\vx,\vsig) \mathrm{d}\vsig,
\end{equation}
where $\Phi$ is a clean nondegenerate phase function and $a$ is a
symbol in $S^m(Y \times X , \mathbb{R}^N)$. The \emph{canonical
relation of $A$} is the canonical relation of $\Phi$ defined in
\eqref{def:Cgenl}. $A$ is called an \emph{elliptic} FIO if its symbol is elliptic. An FIO is called a \emph{pseudodifferential operator} if {$X = Y$ and} its canonical relation $\Cc$ is contained in the diagonal, i.e.,
$\Cc \subset \Delta := \{ (\vx,\vxi;\vx,\vxi)\st \vx\in X,\,
\xi\in\dot{\rr^{n_X}}\}$.
\end{definition}






{Let $X$ and $Y$ be
sets and let $\Omega_1\subset X$ and $\Omega_2\subset Y\times X$. The composition $\Omega_2\circ \Omega_1$ and transpose $\Omega_2^t$ of $\Omega_2$ are defined
\[\begin{aligned}\Omega_2\circ \Omega_1 &= \sparen{\vy\in Y\st \exists \vx\in \Omega_1,\
(\vy,\vx)\in \Omega_2}\\
\Omega_2^t &= \sparen{(\vx,\vy)\st (\vy,\vx)\in \Omega_2}.\end{aligned}\]
We now state the H\"ormander-Sato Lemma \cite[Theorem 8.2.13]{hormanderI},
which provides  the relationship between the
wavefront set of distributions and their images under FIO{s}.

\begin{theorem}[H\"ormander-Sato Lemma]\label{thm:HS} Let $f\in \Ec'(X)$ and
let ${A}:\Ec'(X)\to \Dc'(Y)$ be an FIO with canonical relation $\Cc$.
Then, $\wf({A}f)\subset \Cc\circ \wf(f)$.\end{theorem}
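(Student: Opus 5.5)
The plan is to reduce the H\"ormander--Sato Lemma (Theorem \ref{thm:HS}) to the standard estimate for the wavefront set of a pushforward, working directly from the Schwartz kernel representation \eqref{oscint}. Write $Af(\vy)=\langle K_A(\vy,\cdot),f\rangle$. Since $f\in\Ec'(X)$, we may insert a cutoff $\chi\in\Dc(X)$ equal to $1$ near $\supp f$ without changing $Af$. Then $Af=\pi_*\bigl(K_A\cdot(1\otimes \chi f)\bigr)$, where $\pi:Y\times X\to Y$ is the projection. The proof therefore splits into three steps.

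\emph{First step.} Compute $\wf(K_A)$. For an oscillatory integral with clean nondegenerate phase $\Phi$, the stationary phase / integration by parts argument shows that $K_A$ is smooth in any direction not of the form $(\dd_\vy\Phi,\dd_\vx\Phi)$ at a point of $\Sigma_\Phi$. Concretely, away from $\Sigma_\Phi$ one integrates by parts in $\vsig$ using the operator $L=|\dd_\vsig\Phi|^{-2}\sum_j \overline{\partial_{\sigma_j}\Phi}\,\partial_{\sigma_j}/i$. Homogeneity makes the symbol decay gain a factor $|\vsig|^{-1}$ each time. This gives
\[
\wf(K_A)\subset\{(\vy,\vx;\dd_\vy\Phi,\dd_\vx\Phi): (\vy,\vx,\vsig)\in\Sigma_\Phi\}=\Cc',
\]
where $\Cc'=\{(\vy,\eta;\vx,\xi):(\vy,\eta;\vx,-\xi)\in\Cc\}$. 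I would invoke \cite[Theorem 8.1.9]{hormanderI} for this rather than redo it.

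\emph{Second step.} Handle the product $K_A\cdot(1\otimes\chi f)$. We have $\wf(1\otimes f)\subset\{(\vy,0;\vx,\xi):(\vx,\xi)\in\wf(f)\}$. The product is defined provided no element $(\vy,\eta;\vx,\xi)\in\wf(K_A)$ satisfies $\eta=0$ and $(\vx,-\xi)\in\wf(f)$. Nondegeneracy (namely $\dd_\vy\Phi\neq 0$ on $\Sigma_\Phi$) guarantees $\eta\ne0$, so the product exists. By \cite[Theorem 8.2.10]{hormanderI} its wavefront set is contained in the union of the three sets
\[
\wf(K_A),\qquad \wf(1\otimes f),\qquad \{(\vy,\eta;\vx,\xi_1+\xi_2)\}
\]
with $(\vy,\eta;\vx,\xi_1)\in\wf(K_A)$ and $(\vx,\xi_2)\in\wf(f)$.

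\emph{Third step.} Push forward by $\pi$, which is proper on the support because $\chi f$ has compact support. By \cite[Theorem 8.2.12]{hormanderI},
\[
\wf(\pi_*u)\subset\{(\vy,\eta):\exists\,\vx,\ (\vy,\eta;\vx,0)\in\wf(u)\}.
\]
Now check which pieces survive. From $\wf(K_A)$ alone we would need $\dd_\vx\Phi=0$, which nondegeneracy forbids. From $\wf(1\otimes f)$ we would need $\eta=0$, which is excluded since wavefront sets omit the zero section. From the sum term we need $\xi_1+\xi_2=0$, i.e.\ $\xi_2=-\xi_1=-\dd_\vx\Phi$ with $(\vx,\xi_2)\in\wf(f)$ and $\eta=\dd_\vy\Phi$. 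That is exactly $(\vy,\eta;\vx,\xi_2)\in\Cc$, so $(\vy,\eta)\in\Cc\circ\wf(f)$.

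The main obstacle is the first step, namely justifying the wavefront bound for the oscillatory integral kernel uniformly and with the correct sign conventions. The sign is where the minus in \eqref{def:Cgenl} originates, and it has to be matched against the minus produced in the product/pushforward step. I would treat the first step by citing H\"ormander and carefully track the signs rather than reproving the microlocal estimates.
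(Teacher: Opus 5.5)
Your argument is correct. The paper gives no proof of this statement: it quotes \cite[Theorem 8.2.13]{hormanderI} as a black box. You instead derive it from the more basic results behind that theorem, namely the wavefront bound for oscillatory integrals \cite[Theorem 8.1.9]{hormanderI}, then the tensor product, product and integration-over-the-fibre theorems. This is in fact how H\"ormander proves Theorem 8.2.13.

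The citation is shorter, but it leaves implicit the reduction from H\"ormander's general statement, which carries an extra hypothesis and an extra term. First, the product $K_A\cdot(1\otimes f)$ requires $\wf(f)$ to avoid the covectors of $K_A$ with zero $\vy$-component. Second, the bound contains the set of $(\vy,\eta)$ coming from covectors of $K_A$ with zero $\vx$-component. Your version makes explicit that nondegeneracy of $\Phi$ ($\dd_\vy\Phi\neq 0$ and $\dd_\vx\Phi\neq 0$ on $\Sigma_\Phi$) removes both. It also shows that cleanness of $\Phi$ is never needed for the inclusion.

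Two small points need tightening.
\begin{enumerate}
\item Integrating by parts in $\vsig$ alone only shows that $K_A$ is smooth off the projection of $\Sigma_\Phi$. The directional bound really rests on the joint integration by parts in $(\vy,\vx,\vsig)$ against $e^{-i(\vy\cdot\eta+\vx\cdot\xi)}$ carried out in \cite[Theorem 8.1.9]{hormanderI}. Citing it, as you propose, is the right move.
\item The identity $Af=\pi_*(K_A\cdot(1\otimes\chi f))$ is immediate for $f\in\Dc(X)$ but not for general $f\in\Ec'(X)$. To check that it agrees with the extension of $A$ to $\Ec'(X)$ assumed in the statement, approximate $f$ by test functions in the H\"ormander topology of $\Dc'_\Gamma$, where $\Gamma$ is a closed cone containing $\wf(f)$. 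Then use the sequential continuity of the product in \cite[Theorem 8.2.10]{hormanderI}.
\end{enumerate}
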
}

Let $A$ be an FIO, then its formal adjoint $A^*$ is also an FIO, and if $\Cc$ is the canonical relation of $A$, then the
canonical relation of $A^*$ is $\Cc^t$ \cite{Ho1971}. Many imaging techniques are based
on application of the adjoint operator $A^*$ and so to understand artifacts
we consider $A^* A$ (or, if $A$ does not map to $\Ec'(Y)$, then
$A^* \psi A$ for an appropriate cutoff $\psi$). Because of Theorem
\ref{thm:HS},
\begin{equation}
\label{compo}
\wf(A^* \psi A f) \subset \Cc^t \circ \Cc \circ \wf(f).
\end{equation}
The next two definitions provide tools to analyze the composition in equation \eqref{compo}.
\begin{definition}
\label{defproj} Let $\Cc\subset T^*(Y\times X)$ be the canonical
relation associated to the FIO ${A}:\mathcal{E}'(X)\to
\mathcal{D}'(Y)$. We let $\Pi_L$ and $\Pi_R$ denote the natural left-
and right-projections of $\Cc$, projecting onto the appropriate
coordinates: $\Pi_L:\Cc\to T^*(Y)$ and $\Pi_R : \Cc\to T^*(X)$.
\end{definition}

Because $\Phi$ is nondegenerate, the projections do not map to the
zero section.  
%
%
If $A$ satisfies our next definition, then $A^* A$ (or $A^* \psi A$) is a pseudodifferential operator
\cite{GS1977, quinto}.

\begin{definition}[Bolker condition]\label{def:bolker} Let
${A}:\Ec'(X)\to \Dc'(Y)$ be a FIO with canonical relation $\Cc$ then
{$A$} (or $\Cc$) satisfies the \emph{Bolker Condition} if
the natural projection $\Pi_L:\Cc\to T^*(Y)$ is an embedding
(injective immersion).\end{definition}

Thus, using \eqref{compo}, we see that under the Bolker condition the wavefront set of $A^* {\psi} Af$ will be contained in the wavefront set of $f$. Intuitively, the reconstructed image ($A^* {\psi} Af$) will only include singularities at the same positions and in the same directions as the original image ($f$). 



We use the following definition of Sobolev space.
\begin{definition}
We define the Sobolev space order $s\in \rr$ on $\rn$ to be
\begin{equation}\label{def:Halpha}
H^s(\mathbb{R}^n) = \paren{f\in S'(\mathbb{R}^n) : (1 + |\xi|^2)^{s/2} \mathcal{F}f \in L^2(\mathbb{R}^n)}.
\end{equation}
The Sobolev norm  of $f\in H^s(\rn)$ is
\bel{SobolevNorm}
\Norm{f}_s = \paren{\int_{\rn}  (1 + |\xi|^2)^{s}\norm{\mathcal{F}f(\xi)}^2\d \xi}^{1/2}\ee

Let $K$ be a closed subset of $\rn$. We define $H^s(K)$ to be the set
of all distributions in $H^s(\rn)$ with support in $K$. Note that
$H^s(K)$ is closed since $K$ is a closed set. Now, let $\Omega$ be an
open subset of $\rn$. We define $H^s_c(\Omega)$ to be the set of
distributions in $H^s (\rn)$ with compact support in $\Omega$.
Therefore, the closure of $H^s(\Omega)$ in $H^s(\rn)$ is
$H^s(\cl(\Omega))$.

\end{definition}

The definition of Sobolev space on $\rn$ is from \cite[page 200]{natterer},
and implicit in \eqref{def:Halpha} is the assumption that $\Fc f$ is a
locally integrable function.  There are several definitions of Sobolev
spaces on subsets of $\rn$, and ours is most convenient for our purposes,
and it  agrees with \eqref{def:Halpha}. on $\rn$.



\noindent We now state the lemma from \cite{stefanov2004stability} which will be used to prove our stability estimates.

\begin{lemma}
\label{stef}
Let $X$, $Y$, and $Z$ be Banach spaces, let $A : X\to Y$ be a closed linear operator with domain $\mathcal{D}(A)$, and $K : X\to Z$ be a compact linear operator. Further, let
$$\|f\|_X \leq c\paren{\|Af\|_Y + \|Kf\|_Z}$$
hold for any $f\in \mathcal{D}(A)$, and assume $A$ is injective. Then, there exists $c'$ such that
$$\|f\|_X\leq c'\|Af\|_Y$$
for any $f\in\mathcal{D}(A)$.
\end{lemma}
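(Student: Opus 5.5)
We argue by contradiction, using the standard compactness argument. Suppose no such constant $c'$ exists. Then for each $k$ there is $f_k\in\mathcal{D}(A)$ with $\|f_k\|_X > k\|Af_k\|_Y$. After normalizing we may assume $\|f_k\|_X=1$, and then $\|Af_k\|_Y<1/k\to 0$.

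The first step uses compactness of $K$. Since $(f_k)$ is bounded in $X$, the sequence $(Kf_k)$ has a subsequence converging in $Z$; we relabel so that $(Kf_k)$ itself converges. It is therefore Cauchy in $Z$.

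Next we apply the hypothesis to the differences $f_k-f_j\in\mathcal{D}(A)$, which is a linear subspace:
\[
\|f_k-f_j\|_X\le c\paren{\|Af_k\|_Y+\|Af_j\|_Y+\|Kf_k-Kf_j\|_Z}.
\]
The right-hand side tends to zero as $j,k\to\infty$, so $(f_k)$ is Cauchy in the Banach space $X$. Hence $f_k\to f$ for some $f\in X$ with $\|f\|_X=1$.

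The final step, which is the only delicate one, is to use closedness of $A$ in place of continuity. We have $f_k\to f$ in $X$ and $Af_k\to 0$ in $Y$. Since the graph of $A$ is closed, it follows that $f\in\mathcal{D}(A)$ and $Af=0$. Injectivity of $A$ then forces $f=0$, which contradicts $\|f\|_X=1$. Therefore a constant $c'$ exists with $\|f\|_X\le c'\|Af\|_Y$ for all $f\in\mathcal{D}(A)$.

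The care needed in this last step is to pass to the limit through the closed graph rather than assuming $A$ is bounded. Everything else is routine bookkeeping.
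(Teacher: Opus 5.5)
Your argument is correct and complete: normalizing a violating sequence, extracting a subsequence along which $(Kf_k)$ converges, applying the estimate to $f_k-f_j$ to get a Cauchy sequence, and then using the closed graph together with injectivity to reach a contradiction is exactly the right route. The paper does not prove this lemma itself; it quotes it from \cite{stefanov2004stability}, where it is established by essentially this same compactness-by-contradiction argument.
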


%% file: bolker.tex
\section{Spherical transform with smoothly varying \texorpdfstring{$r$}{r}} 
\label{bolker}
\noindent In this section, we introduce the spherical Radon transform we will be analyzing, and present our main microlocal analysis results. Based on this theory, we provide conditions for stable reconstruction.

We consider the generating function
\begin{equation}
\Psi(\vy,\vx) = |\vx-\vy|^2 - r^2(\vy),
\end{equation}
where $\vx,\vy \in \mathbb{R}^n$, and $r \in C^{\infty}\paren{\mathbb{R}^n}$ is a smooth radius function with $r>0$. The set
$$S(\vy) = \{ \vx \in \mathbb{R}^n : \Psi(\vy,\vx) = 0\}$$
defines a sphere in $\mathbb{R}^n$ with center $\vy$, radius
$r(\vy)$. 
Then, we define the spherical Radon transform
\begin{equation}
\label{radon_def}
\begin{split}
Rf(\vy) &= \int_{S(\vy)} f \mathrm{d}S \\
&= \int_{\mathbb{R}^n}|\nabla_{\vx}\Psi|\delta\paren{\Psi(\vy,\vx)}f(\vx)\mathrm{d}\vx\\
&=
\frac{1}{2\pi}\int_{-\infty}^{\infty}\int_{\mathbb{R}^n}|\nabla_{\vx}\Psi|e^{i\sigma\Psi(\vy,\vx)}f(\vx)\mathrm{d}\vx\,\dd
\sigma
\end{split}
\end{equation}
where $\mathrm{d}S$ is the surface measure on $S(\vy)$ and $\delta$ is
the Dirac delta function. The middle term in \eqref{radon_def} follows
from the theory of Palamodov \cite{palamodov2012uniform}. 



\begin{proposition}
\label{fio_prop} Let $r:\rn\to (0,\infty)$ be smooth.
Assume for all $\vy\in \rn$, $\abs{\nabla_{\vy}r}\neq 1$. Then $R$ is an elliptic FIO order $-(n-1)/2$ with phase
\begin{equation}
\Phi(\vy,\vx,\sigma) = \sigma\paren{|\vx-\vy|^2 -
r^2(\vy)}=\sigma\Psi(\vy,\vx).
\end{equation}

Global coordinates on the canonical relation, $\Cc$, for $R$ are given
by \bel{def:C}\rn\times S^{n-1}\times\dR\ni (\vy,\omega,\sigma)\mapsto
\paren{ \vy,-2\sigma r(\vy)\paren{\omega + \nabla_{\vy}r} ; 
\vy+r(\vy)\omega,-2\sigma r(\vy)\omega }\in \Cc.\ee
\end{proposition}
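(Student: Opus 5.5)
The argument starts from the oscillatory-integral representation \eqref{radon_def}. With $\Phi(\vy,\vx,\sigma)=\sigma\Psi(\vy,\vx)$ and $N=1$, the Schwartz kernel of $R$ is
\[
K_R(\vy,\vx)=\frac{1}{2\pi}\int_{\rr} e^{i\Phi(\vy,\vx,\sigma)}\,a(\vy,\vx)\d\sigma,\qquad a(\vy,\vx)=|\nabla_\vx\Psi|=2|\vx-\vy| .
\]
Since $a$ does not depend on $\sigma$, it is a symbol of order $m=0$. On the critical set it equals $2r(\vy)>0$, so it is elliptic there. Strictly, $a$ vanishes at $\vx=\vy$, which lies off $\Sigma_\Phi$ because $r>0$. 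I would therefore multiply by a smooth cutoff equal to $1$ near $\Sigma_\Phi$; this changes the kernel only by a smooth function. The amplitude is not smooth at $\sigma=0$, so I would also split off a smooth cutoff in $\sigma$ near $0$; that piece contributes a smooth kernel. With $n_X=n_Y=n$, $N=1$ and $m=0$, Definition \ref{FIOdef} gives order $0+1/2-2n/4=-(n-1)/2$, as claimed.

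Next I would verify the phase conditions of Definitions \ref{phasedef} and \ref{def:canon}. Homogeneity of degree one in $\sigma$ is immediate. The critical set is
\[
\Sigma_\Phi=\{(\vy,\vx,\sigma):\Psi(\vy,\vx)=0,\ \sigma\neq0\}.
\]
For cleanness, $\dd(\dd_\sigma\Phi)=\dd\Psi=(\dd_\vy\Psi,\dd_\vx\Psi,0)$, and $\dd_\vx\Psi=2(\vx-\vy)\neq0$ on $\Sigma_\Phi$. So this Jacobian has constant rank $1$, $\Sigma_\Phi$ is a smooth hypersurface, and its tangent space is the kernel. The same computation shows $\dd\Phi\neq0$. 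For nondegeneracy, $\dd_\vx\Phi=2\sigma(\vx-\vy)\neq0$ on $\Sigma_\Phi$ as just noted. The other partial is
\[
\dd_\vy\Phi=-2\sigma\big((\vx-\vy)+r\nabla_\vy r\big).
\]
On $\Sigma_\Phi$, write $\vx=\vy+r(\vy)\omega$ with $\omega\in\snm$; then $\dd_\vy\Phi=-2\sigma r(\omega+\nabla_\vy r)$. This vanishes only if $\omega=-\nabla_\vy r$, which forces $|\nabla_\vy r|=1$. The hypothesis $|\nabla_\vy r|\neq1$ excludes this, and this is the one place the assumption enters.

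Finally, I would obtain the coordinates on $\Cc$ by substituting $\vx=\vy+r(\vy)\omega$ into \eqref{def:Cgenl}. The $\vy$-covector is $\dd_\vy\Phi=-2\sigma r(\omega+\nabla_\vy r)$. The $\vx$-covector is $-\dd_\vx\Phi=-2\sigma r\omega$. The sign conventions need care here: the formula \eqref{def:C} has $-2\sigma r\omega$ in the $\vx$-slot, whereas literally $-\dd_\vx\Phi=-2\sigma(\vx-\vy)$ equals that expression. Either way, replacing $\sigma$ by $-\sigma$ is a bijection of $\dR$, so the stated parametrization is correct up to this harmless reparametrization. It remains to check that $(\vy,\omega,\sigma)\mapsto(\vy,\vy+r\omega,\sigma)$ is a diffeomorphism of $\rn\times\snm\times\dR$ onto $\Sigma_\Phi$; this is clear since $\omega=(\vx-\vy)/r(\vy)$. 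The map from $\Sigma_\Phi$ to $\Cc$ is a diffeomorphism because it is injective: $\vy$ and $\vx$ are read off directly, and then $\sigma$ is recovered from $-2\sigma r\omega$. The only mildly delicate points are this sign/bookkeeping and the cutoff handling of $a$ near $\sigma=0$ and $\vx=\vy$.
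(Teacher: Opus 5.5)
Your proposal is correct and follows essentially the same route as the paper: verify homogeneity, $\dd_\vx\Phi\neq 0$ on $\Sigma_\Phi$ via $r>0$ and $\dd_\vy\Phi\neq 0$ via $\abs{\nabla_\vy r}\neq 1$, read the order off the $\sigma$-independent amplitude, and substitute $\vx=\vy+r(\vy)\omega$ into \eqref{def:Cgenl}; your rank check on $\dd(\dd_\sigma\Phi)$ and your embedding argument for $\Sigma_\Phi\to\Cc$ supply details the paper leaves implicit. Two simplifications: there is no sign issue at all, since \eqref{def:C} is exactly $(\dd_\vy\Phi,-\dd_\vx\Phi)$ evaluated at $\vx=\vy+r(\vy)\omega$ (and a flip $\sigma\mapsto-\sigma$ changes both covector slots, so it could never repair a sign error in only one of them), and because $\delta(\Psi)$ is supported on $\{\Psi=0\}$ you may replace $2\abs{\vx-\vy}$ by the amplitude $2r(\vy)$, as the paper does, which is smooth and elliptic in the sense of Definition \ref{ellip} everywhere, so no cutoff near $\vx=\vy$ is needed.
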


\begin{proof} We show   $\Phi$ satisfies the conditions to be a phase
function.
The total derivative of $\Phi$ is
\begin{equation}\label{dPhi}
\mathrm{d}\Phi = \paren{
-2\sigma\paren{(\vx-\vy)+r(\vy)\nabla_{\vy}r}, 2\sigma\paren{\vx-\vy},
\Psi(\vy,\vx)}\end{equation} Let $\Sigma_\Psi$ be the critical set of
$\Psi$ given by \eqref{def:SPhi}. Note that $\dd_\vx\Phi$ is never
zero on $\Sigma_\Phi$ since $\mathrm{d}\Phi = 0 \implies (\vx-\vy) = 0
\implies r(\vy) = 0$ but $r>0$.  Note that $\dd_\vy\Phi$ is never
zero because we are assuming
$\abs{\nabla_\vy r}\neq 1$. In addition, $\Phi$ is homogeneous of degree one in
$\sigma$. This shows $\Phi$ is a clean phase function. Then, along
with the second expression in \eqref{radon_def}, shows that $R$ is an FIO.

 The symbol of $R$ is 
\begin{equation}
a(\vy,\vx) = 2|\vx-\vy| = 2r(\vy)>0,
\end{equation}
which is elliptic of order zero since $a$ is smooth, does not depend on
$\sigma$, and is never zero. Therefore, $R$ is an elliptic FIO order
$0+1/2-2n/4 = -(n-1)/2$.

{To derive \eqref{def:C}, we note that when $\sigma \neq 0$,
$\dd_\sigma \Phi = 0$ if and only if $\Psi(\vy,\vx)=0$. Let $Z$ be the
zero set of $\Phi$.\footnote{$Z=\sparen{(\vy,\vx)\in Y\times X\st
\vx\in S(\vy)}$ is the incidence relation of the Radon transform $R$,
and the Schwartz Kernel of $R$ is a distribution on $Y\times X$ that
integrates over $Z$ \cite{Gu1975,GS1977, quinto}.} If $(\vy,\vx)\in Z$
then $\vx\in S(\vy)$, so $\vx$ can be written $\vy +r(\vy)\omega$ for
some $\omega\in S^{n-1}$, and this gives global coordinates on $Z$.
\[(\vy,\omega)\mapsto (\vy,\vy+r(\vy)\omega).\]
Using these coordinates, when $\Psi(\vy,\vx)=0$, the derivatives
$\mathrm{d}_\vy \Psi$ and $\mathrm{d}_\vx \Psi$ become
$$\mathrm{d}_\vy \Phi = -2\sigma\paren{(\vx - \vy)  + r(\vy)\nabla_{\vy}r} = -2\sigma r(\vy)\paren{\omega + \nabla_{\vy}r},$$
and
$$\mathrm{d}_\vx \Phi = 2\sigma (\vx - \vy) = 2\sigma r(\vy)\omega.$$
Expression \eqref{def:C} now follows from Definition
\ref{def:canon}.}\end{proof}

\subsection{The Bolker Condition}
In our next two theorems, we establish necessary and
sufficient requirements on $R$ so that the Bolker condition is
satisfied.

\begin{theorem}
\label{imm_thm}
Let $r:\rn\to (0,\infty)$ be smooth. The left projection of $\Cc$, $\Pi_L$, is an immersion if and only if 
\bel{norm1} \norm{\nabla_{\vy}r}<1\ \ \forall \vy\in \rn.\ee
\end{theorem}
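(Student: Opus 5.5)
We work in the global coordinates $(\vy,\omega,\sigma)\in \rn\times S^{n-1}\times\dR$ on $\Cc$ from Proposition \ref{fio_prop}. In these coordinates the left projection is
\[
\Pi_L(\vy,\omega,\sigma) = \paren{\vy,\,-2\sigma r(\vy)\paren{\omega+\nabla_\vy r(\vy)}}.
\]
Since the first component is the identity in $\vy$, the differential $\dd\Pi_L$ is block lower triangular, with an identity block in the $\vy$ directions. Hence $\Pi_L$ is an immersion exactly when, for each fixed $\vy$, the fiber map
\[
F_\vy : S^{n-1}\times\dR \to \rn,\qquad F_\vy(\omega,\sigma) = -2\sigma r(\vy)\paren{\omega+\vv},\quad \vv=\nabla_\vy r(\vy),
\]
has injective differential. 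The $\vy$-derivatives of the second component land only in the off-diagonal block and do not affect injectivity. Because $r(\vy)>0$ is just a nonzero scalar factor, we can drop it. The task reduces to a pointwise question about the $n$-dimensional map $G(\omega,\sigma)=\sigma(\omega+\vv)$ for a fixed vector $\vv$.

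Next we compute $\dd G$. A tangent vector at $(\omega,\sigma)$ is $(\tau,s)$ with $\tau\in T_\omega S^{n-1}=\omega^\perp$ and $s\in\rr$, and
\[
\dd G(\tau,s) = \sigma\tau + s(\omega+\vv).
\]
Since $\sigma\neq 0$, the map $\tau\mapsto\sigma\tau$ is injective onto $\omega^\perp$. So $\dd G$ has a kernel if and only if $\omega+\vv\in\omega^\perp$, i.e.
\[
\omega\cdot(\omega+\vv)=0 \iff 1+\omega\cdot\vv=0 \iff \omega\cdot\vv=-1 .
\]
Indeed, if $\omega+\vv=-\sigma\tau/s$ lies in $\omega^\perp$, we obtain a nonzero kernel vector $(\tau,s)$ with $s\ne0$. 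Conversely, if $\omega+\vv\notin\omega^\perp$, then its $\omega$-component forces $s=0$ and then $\tau=0$. Thus $\Pi_L$ fails to be an immersion at $(\vy,\omega,\sigma)$ precisely when $\omega\cdot\nabla_\vy r(\vy)=-1$.

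Finally we translate this into the norm condition. By Cauchy--Schwarz, $\omega\cdot\vv\ge -|\vv|$, with every value in $[-|\vv|,|\vv|]$ attained as $\omega$ ranges over $S^{n-1}$. Therefore some $\omega\in S^{n-1}$ satisfies $\omega\cdot\vv=-1$ if and only if $|\vv|\ge1$. This shows that $\Pi_L$ is an immersion on all of $\Cc$ if and only if $|\nabla_\vy r|<1$ for every $\vy$, which is \eqref{norm1}.

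One bookkeeping point needs care. Proposition \ref{fio_prop} assumes $|\nabla_\vy r|\neq1$ so that $\Cc$ is a canonical relation. For the ``only if'' direction, we would either note that the canonical-relation formula \eqref{def:C} still makes sense as a parametrized set, or handle the case $|\nabla_\vy r|=1$ directly. In that case $\omega=-\nabla_\vy r$ makes $\dd_\vy\Phi$ vanish and gives the degenerate point. The main obstacle is not the computation but this justification that the block-triangular reduction is legitimate, i.e. that $\dd\Pi_L$ is injective if and only if the fiber differential is. That follows from writing the Jacobian in the $(\vy;\omega,\sigma)$ splitting.
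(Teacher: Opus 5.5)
Your proof is correct and is essentially the paper's argument. The paper also writes the Jacobian of $\Pi_L$ in the coordinates $(\vy,\omega,\sigma)$ as block lower triangular with an identity block, with lower-right block $-2r(\vy)[\sigma\Theta,\omega+\nabla_\vy r]$ where the columns of $\Theta$ span $\omega^\perp$, and concludes that the determinant vanishes exactly when $\omega+\nabla_\vy r\in\omega^\perp$, i.e.\ $\omega\cdot\nabla_\vy r=-1$. Your intrinsic tangent vectors $(\tau,s)$ are only a cosmetic replacement for the paper's spherical coordinates. Your explicit Cauchy--Schwarz/intermediate-value step (which, like the paper, implicitly needs $S^{n-1}$ connected, i.e.\ $n\ge 2$) and your remark about the degenerate case $|\nabla_\vy r|=1$ spell out what the paper leaves implicit.
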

\begin{proof}
The left projection of $R$ is 
\begin{equation}\label{PiL}
\Pi_L(\vy,\omega,\sigma) = \paren{\vy,-2\sigma r(\vy)\paren{\omega + \nabla_{\vy}r}},
\end{equation}
Let us parametrize $\snm$ near $\omega$ using standard spherical
coordinates $\varphi_1,\ldots,\varphi_{n-1}$. Then, the derivative of
$\Pi_L$ is
\begin{equation}\label{DPiL}
D\Pi_L = \begin{pmatrix} I_{n\times n} & 0_{n \times n} \\ \cdot &
-2r(\vy) \left[\sigma\Theta, \paren{\omega + \nabla_{\vy}r}\right]
\end{pmatrix},
\end{equation}
where
$$\Theta = [\omega_{\varphi_1},\ldots,\omega_{\varphi_{n-1}}],$$
and $[\Theta,\omega]$ is an orthogonal matrix and the columns of
$\Theta$ are a basis for the orthogonal complement of $\omega$, which
we denote $\omega^\perp$. 

Using \eqref{DPiL} and since the columns of $\Theta$ are independent,
we see that $\det(D\Pi_L)$ is never zero if and only if
\bel{omperp}\omega+\naby r(\vy) \notin \omega^\perp\ \ \forall
(\vy,\om)\in
\rn\times \snm.\ee Note that \eqref{omperp} is independent of the coordinates
$\varphi_1,\dots, \varphi_{n-1}$. 

The only way for  \eqref{omperp} to hold is if $\naby r(\vy)\cdot
\om \neq -1,  \forall (\vy,\om)\in \rn\times \snm$ or equivalently,
$\abs{\naby r(\vy)}<1$
for all $\vy\in \rn$. 
\end{proof}

\begin{remark}
Under the assumption that $|\nabla_{\vy}r|<1$, $\Pi_L$ is also
surjective. To see this, let us fix $\vy$. Then, as
$|\nabla_{\vy}r|<1$, we can choose $\omega\in \snm$ so that $\omega +
\nabla_{\vy}r$ is in any direction, and vary $\sigma$ so that
$-2\sigma r(\vy)\paren{\omega + \nabla_{\vy}r}$ covers the whole of
$\mathbb{R}^n\backslash \{0\}$.
\end{remark}

Next, we discuss injectivity of $\Pi_L$. This will lead to a
discussion on surjectivity $\Pi_R$

\begin{theorem}\label{thm:PiL-2-to-1} Assume the norm inequality
\eqref{norm1} holds. Then, $\Pi_L:\Cc\to \Pi_L(\Cc)$ is two-to-one.
Specifically, let $(\vy,\eta)\in \Pi_L(\Cc)$ and let $(\vx,\xi)$ be chosen so
$\lambda = (\vy,\eta;\vx,\xi)$ is one of the preimages of
$(\vy,\eta)$. Then, $\vx\in S(\vy)$ and $\xi$ is normal to $S(\vy)$ at
$\vy$. 

To describe the second preimage, let $L_{\vy,\vx}$ be the line
containing $\vx$ and $\vz=\vy - r(\vy)\nabla r(\vy)$. This line
intersects $S(\vy)$ at $\vx$ and a second point
\bel{def:hvx}\hvx=\hvx(\vy,\vx).\ee Further, there is a unique
$\hxi\in \drn$ such that $\hlambda=(\vy,\eta;\hvx,\hxi)\in \Cc$. The
covectors $\lambda$ and $\hlambda$ are the two preimages of
$(\vy,\eta)$.
\end{theorem}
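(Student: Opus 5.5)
Using the global coordinates \eqref{def:C} from Proposition \ref{fio_prop}, I reduce the question to counting solutions $(\omega,\sigma)\in\snm\times\dR$ of one vector equation. Fix $(\vy,\eta)\in\Pi_L(\Cc)$ and write $\vv=\naby r(\vy)$, so that $|\vv|<1$ by \eqref{norm1}. By \eqref{PiL}, the preimages of $(\vy,\eta)$ are exactly the triples $(\vy,\omega,\sigma)$ with
\[
-2\sigma r(\vy)\paren{\omega+\vv}=\eta .
\]
Each such preimage gives $\vx=\vy+r(\vy)\omega\in S(\vy)$ and $\xi=-2\sigma r(\vy)\omega$, which is parallel to $\vx-\vy$ and hence normal to $S(\vy)$ at $\vx$. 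This proves the first assertion.

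For the counting, set $\hat\eta=\eta/|\eta|$. The condition says that $\omega+\vv$ is a nonzero multiple $t\hat\eta$, $t\in\rr\setminus\{0\}$, with $\sigma=-|\eta|/(2r(\vy)t)$ then uniquely determined. Thus $\omega=-\vv+t\hat\eta$ must lie on the unit sphere, which gives the quadratic
\[
t^2-2t\,(\vv\cdot\hat\eta)+|\vv|^2-1=0 .
\]
Its constant term $|\vv|^2-1$ is negative, so it has exactly two real roots $t_+>0>t_-$, and both are nonzero. So there are exactly two pairs $(\omega,\sigma)$. They are distinct because $t_+\neq t_-$, and since $(\vy,\omega,\sigma)$ are global coordinates on $\Cc$ (Proposition \ref{fio_prop}), they give two distinct points of $\Cc$. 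This is the two-to-one statement.

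It remains to identify the second point geometrically. Both points are $\vy+r(\vy)\omega_\pm=\vy-r(\vy)\vv+r(\vy)t_\pm\hat\eta=\vz+r(\vy)t_\pm\hat\eta$. They therefore lie on the line through $\vz=\vy-r(\vy)\naby r(\vy)$ in direction $\hat\eta$, which is $L_{\vy,\vx}$, and they are its two intersection points with $S(\vy)$. The point $\vz$ lies strictly inside $S(\vy)$ because $|\vz-\vy|=r(\vy)|\vv|<r(\vy)$. Hence every line through $\vz$ meets $S(\vy)$ in exactly two points, and $\vx\neq\vz$ guarantees that $L_{\vy,\vx}$ is well defined. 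So $\hvx$ is the other intersection point. The covector $\hxi=-2\hsigma r(\vy)\homega$ is then uniquely determined, because $\homega=(\hvx-\vy)/r(\vy)$ is fixed by $\hvx$ and $\hsigma$ is fixed by $\homega$ through the equation $-2\hsigma r(\vy)(\homega+\vv)=\eta$.

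The step needing the most care is checking that $t$ and $\sigma$ stay nonzero and that the correspondence $\vx\mapsto\hvx$ along $L_{\vy,\vx}$ matches the two roots. Both follow from $|\vv|<1$, that is, from $\vz$ lying strictly inside the sphere.
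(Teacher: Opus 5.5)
Your proof is correct, but it is organized differently from the paper's. The paper works pairwise. It assumes a second preimage $(\vy,\homega,\hsigma)$ and equates $\sigma(\omega+\naby r)=\hsigma(\homega+\naby r)$, which expresses $-\naby r(\vy)$ as an affine combination of $\omega$ and $\homega$. It reads this as saying that $\vz$ lies on the line through $\vx$ and $\hvx$. Since $\vz$ is interior to $S(\vy)$, that line meets the sphere in only one point besides $\vx$, and this gives uniqueness of the second preimage.

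You instead solve the fiber equation outright. Writing $\omega+\vv=t\hat\eta$ turns the fiber of $\Pi_L$ over $(\vy,\eta)$ into the root set of a quadratic whose constant term $|\vv|^2-1$ is negative. This buys three things:
\begin{itemize}
\item It gives existence and uniqueness of both preimages at once. The paper's argument mainly shows that any second preimage must sit over $\hvx$, and leaves implicit that one actually does; that step needs $\hvx-\vz$ to be parallel to $\eta$.
\item It identifies $L_{\vy,\vx}$ explicitly as the line through $\vz$ parallel to $\eta$.
\item It gives surjectivity of $\Pi_L$ onto $\rn\times\drn$ for free, as in the remark after Theorem \ref{imm_thm}.
\end{itemize}

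The paper's version, in turn, is more coordinate-free. It isolates the collinearity of $\vz$, $\vx$, $\hvx$, which is exactly the mirror-point picture used later for artifacts.

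The one thing you leave tacit is $\eta\neq\zero$, which you need in order to define $\hat\eta$. It holds because $\sigma\neq 0$, $r>0$, and $\omega+\naby r(\vy)\neq\zero$ whenever $|\naby r|<1$.
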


\begin{remark}
  The theorem statement above describes some of the key geometric
properties of the preimages of $(\vy,\eta)$. We will explain further
the importance of the ``mirror points" $\hvx(\vy,\vx)$ in section
\ref{sec:artifacts}.\end{remark}

\begin{proof}[Proof of Theorem \ref{thm:PiL-2-to-1}]
Let $\lambda = (\vy,\eta;\vx,\xi)$ be a preimage of $(\vx,\xi)$ in
$\Cc$. For $\lambda$ to be in $\Cc$, $\vx$ must be in $S(\vy)$ and
$\xi$ must be normal to $S(\vy)$ at $\vx$. This explains why the only
preimages of $(\vx,\xi)$ have $\vy$ coordinates on the line
$\ell_{\vy,\vx}$

Let $\omega =
\xi/\norm{\xi}$ and choose $\sigma$ such that $\xi = \sigma \omega$.
Then, $\lambda$ has coordinates $(\vy,\omega,\sigma)$.

By \eqref{norm1}, $\vy-r(\vy)\nablay r(\vy)$ is inside of $S(\vy)$ so
$L_{\vy,\vx}$ must intersect $S(\vy)$ in  one other point
besides $\vx$. Let $\hvx$ be the other point of intersection.

We now  show there is one and
 only one other preimage of $(\vy,\eta)$ and the second
 preimage of $(\vy,\eta)$ is of the form given in the theorem.

 Let $(\vy,\homega,\hsigma)$ be coordinates of a different preimage of
$(\vy,\eta)=\Pi_L(\vy,\omega,\sigma)$. Then
  \[\Pi_L(\vy,\omega,\sigma) = \Pi_L(\vy,\homega,\hsigma).\] Note that
$\sigma\neq \hsigma$ because, if they were equal,  then
$\omega=\homega$ as can be seen from \eqref{def:C}. 

 A calculation shows that \bel{inj} -\nabla_{\vy}r = t\omega + (1-t)
\homega, \text{ \ where \ } t = \sigma/(\sigma-\hsigma) \neq 0,1.\ee
The expression $t\mapsto t\omega + (1-t)\homega$ parameterizes the
line through $\omega$ and $\homega$. Since we have assumed
$\norm{\nabla_\vy r}< 1$ by \eqref{norm1}, $\homega\neq \omega$. When
we translate to points in $\rn$, this says that $\vx =
\vy+r(\vy)\omega$, $\hvx =\vy+r(\vy)\homega$ are on $L_{\vy\vx}$.
Furthermore, using coordinates $(\vy,\homega,\hsigma)$ in
\eqref{def:C}, we see $\hat\gamma$ is as given in the theorem.

Since $(\hvx, \hxi)=\Pi_R(\vy,\homega,\hsigma)$, $\hxi$ is conormal to
$S(\vy)$ at $\hvx$. This finishes the proof.\end{proof}


\begin{theorem}\label{inj_thm}  Let $r:\rn\to (0,\infty)$ be smooth.  Assume the norm inequality \eqref{norm1} holds. 
Let $\Omega$ be an open set in $\rn$ and let
\bel{def:CcO}\CcO=\sparen{(\vy, \eta,\vx,\xi)\in \Cc \st \vx\in
\Omega}\ee

Let $(\vy,\eta)\in \Pi_L(\Cc)$ and $\lambda = (\vy,\eta;\vx,\xi)$
and $\hlambda=(\vy,\eta;\hvx,\hxi)$ be
its two preimages in $\Cc$. Then the restricted map, $\Pi_L:\Cc_\Omega\to T^*(\rn)$ is injective if and only if for any such $(\vy,\eta)$
at most one of $\vx$ and $\hvx$ is in $\Omega$. 

In this case, $\Cc_\Omega$ satisfies the Bolker condition.\end{theorem}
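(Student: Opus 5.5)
The plan is to read everything off Theorem \ref{thm:PiL-2-to-1}, which already gives, under \eqref{norm1}, a complete description of the fibers of $\Pi_L$ on all of $\Cc$. The global coordinates \eqref{def:C} identify $\CcO$ with the open subset $\sparen{(\vy,\omega,\sigma)\st \vy+r(\vy)\omega\in\Omega}$ of $\rn\times\snm\times\dR$, because $\vx=\Pi_R$-base coordinate depends continuously on $(\vy,\omega)$. Hence $\CcO$ is an open submanifold of $\Cc$, and $\Pi_L|_{\CcO}$ is simply the restriction of $\Pi_L$.

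\emph{Injectivity equivalence.} Fix $(\vy,\eta)\in\Pi_L(\Cc)$. By Theorem \ref{thm:PiL-2-to-1}, its preimages in $\Cc$ are exactly $\lambda$ and $\hlambda$, with base points $\vx$ and $\hvx$ in $X$, and these are distinct points since $\homega\neq\omega$. The preimages of $(\vy,\eta)$ in $\CcO$ are therefore those among $\lambda,\hlambda$ whose $X$-base point lies in $\Omega$. If for every such $(\vy,\eta)$ at most one of $\vx,\hvx$ lies in $\Omega$, each fiber of $\Pi_L|_{\CcO}$ has at most one point, so the restriction is injective. Conversely, suppose some $(\vy,\eta)$ has both $\vx,\hvx\in\Omega$. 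Then $\lambda\neq\hlambda$ both lie in $\CcO$ and have the same image, so injectivity fails. One small point is that the statement quantifies over $(\vy,\eta)\in\Pi_L(\Cc)$ rather than $\Pi_L(\CcO)$, but this is harmless: if neither preimage lies in $\CcO$, the condition holds vacuously.

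\emph{Bolker condition.} By Theorem \ref{imm_thm}, \eqref{norm1} makes $\Pi_L$ an immersion on $\Cc$, hence also on the open subset $\CcO$. Combined with injectivity, $\Pi_L|_{\CcO}$ is an injective immersion, which is exactly Definition \ref{def:bolker}.

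\emph{Expected obstacle.} The main obstacle is only a matter of care. First, the fiber description in Theorem \ref{thm:PiL-2-to-1} must be exactly two points, with $\hvx\neq\vx$, for every $(\vy,\eta)$; this uses that $\vz=\vy-r\nabla r$ lies strictly inside $S(\vy)$ by \eqref{norm1}. Second, if one interprets ``embedding'' strictly as a homeomorphism onto the image, an extra properness-type argument would be needed. I would note that the paper's Definition \ref{def:bolker} explicitly means an injective immersion, so no further topology is required.
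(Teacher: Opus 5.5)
Your proposal is correct and follows the same route as the paper: the fiber description in Theorem \ref{thm:PiL-2-to-1} reduces injectivity of $\Pi_L$ on $\CcO$ to whether both base points $\vx$ and $\hvx$ lie in $\Omega$, and Theorem \ref{imm_thm} supplies the immersion property. Your extra remarks make explicit what the paper's short proof leaves implicit. These are that $\CcO$ is open in $\Cc$, so the immersion restricts to it, and that $\vx\neq\hvx$ because $\vz$ lies strictly inside $S(\vy)$.
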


\noindent The following is the first advantage of knowing the Bolker condition
holds. 

\begin{corollary}\label{cor:bolker} Under the assumptions of Theorem
\ref{inj_thm},\bel{def:DeltaO}\paren{\Cc_\Omega}^t\circ \Cc_\Omega
\subset \Delta_\Omega=\sparen{(\vx,\xi;\vx,\xi)\st \vx\in \Omega, \xi\in
\drn},\ee 
\end{corollary}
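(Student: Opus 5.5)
We unwind the definition of the composition and use the injectivity of $\Pi_L$ on $\CcO$ supplied by Theorem \ref{inj_thm}. By definition of transpose and composition of relations,
\[
\paren{\CcO}^t\circ\CcO=\sparen{(\vx',\xi';\vx,\xi)\st \exists (\vy,\eta)\in T^*(\rn) \text{ with } (\vy,\eta;\vx,\xi)\in\CcO \text{ and } (\vy,\eta;\vx',\xi')\in\CcO}.
\]
So take an arbitrary element $(\vx',\xi';\vx,\xi)$ of the left side, together with a witness $(\vy,\eta)$. Set
\[
\lambda=(\vy,\eta;\vx,\xi), \qquad \lambda'=(\vy,\eta;\vx',\xi').
\]
Both lie in $\CcO$, and $\Pi_L(\lambda)=\Pi_L(\lambda')=(\vy,\eta)$.

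The key step is then immediate. Theorem \ref{inj_thm} states that under its hypotheses $\Pi_L:\CcO\to T^*(\rn)$ is injective, so $\lambda=\lambda'$. Comparing the right-hand components gives $\vx'=\vx$ and $\xi'=\xi$. Since $\lambda\in\CcO$, we have $\vx\in\Omega$ by \eqref{def:CcO}. Moreover, $\xi\neq\zero$, because by \eqref{def:C} we have $\xi=-2\sigma r(\vy)\omega$ with $\sigma\neq 0$, $r>0$ and $\omega\in\snm$. Equivalently, this follows from nondegeneracy of the phase in Proposition \ref{fio_prop}. Hence $(\vx',\xi';\vx,\xi)=(\vx,\xi;\vx,\xi)\in\Delta_\Omega$.

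There is no real obstacle here. The only points requiring care are, first, to read the hypotheses of Theorem \ref{inj_thm} as including its injectivity condition (at most one of $\vx,\hvx$ lies in $\Omega$), and second, to get the order of the factors in the composition right. For the second point, $\CcO\subset T^*(Y)\times T^*(X)$, so $\CcO^t$ maps $T^*(Y)$ to $T^*(X)$, and the composite relates covectors in $T^*(X)$ through a common $(\vy,\eta)$. No use of the immersion property from Theorem \ref{imm_thm} is needed for this set-theoretic inclusion; that property matters only for concluding that $R^*\psi R$ is a pseudodifferential operator.
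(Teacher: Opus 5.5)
Your argument is correct and follows the paper's approach. The paper just remarks that the inclusion is an immediate consequence of the Bolker condition, i.e., of the injectivity of $\Pi_L$ on $\CcO$ given by Theorem \ref{inj_thm}, and you have written that step out explicitly. You are also right that the hypothesis must include the condition that at most one of $\vx$ and $\hvx$ lies in $\Omega$, and that only injectivity, not the immersion property, is used.
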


This is an immediate corollary of the Bolker condition. Once we
develop conditions under which one can compose $R$ and its adjoint, we
will use this to show the normal operator is a \psido.

\begin{proof}[Proof of Theorem \ref{inj_thm}]
Let $(\vy,\eta)\in \Pi_L(\Cc_\Omega)$ and let $\gamma$ and
$\hat\gamma$ be the two preimages of $\Pi_L$ in the full canonical
relation, $\Cc$ given in Theorem \ref{thm:PiL-2-to-1}. Bolker holds if
and only if at most one of $\lambda$ and $\hat\lambda$ is in
$\Cc_\Omega$. This is true if and at most one of $\vx$ and $\hvx$ is in
$\Omega$. By Theorem \ref{imm_thm} $\Pi_L$ is an immersion, so the
Bolker condition holds.\end{proof}

\input{bolker-S3.2-new.tex}


\subsection{Artifacts for the normal operator and geometric
conditions for stability}\label{sec:artifacts}

We now analyze the artifacts that can occur when the Bolker condition
does not hold.

In some cases, such as when $r(\vy)$ is constant,  $R^*$ can be
composed with $R$, but in others, such as the transform in section
\ref{sec:linearCST}, one needs a cutoff, $h$, to compose: $R^* h R$.  
\begin{equation}\label{def:R dagger}\begin{aligned}
&\text{We will let $R^\dagger = R$ when $R^*$ and $R$ can be
composed and
$R^\dagger = hR$}\\
&\text{for a suitably chosen cutoff, $h$, otherwise.}\end{aligned}
\end{equation}

Note for $\vy\in Y$, $N^*(S(\vy))$ is the conormal bundle of the
sphere $S(\vy)$. Our next corollary puts together our results in the last section to
describe the microlocal artifacts that can be generated by $R^*R^\dagger$.

\begin{corollary}\label{cor:artifact points} Let $(\vx,\xi)\in
\Pi_R(\Cc)$.
Then, there is either one or two preimages of $(\vx,\xi)$ under
$\Pi_R$.

Let $\lambda=(\vy,\eta,\vx,\xi)\in \Cc$ be a preimage. Then,
$(\vx,\xi)\in N^*(S(\vy))$, and $\Pi_L(\lambda)=(\vy,\eta)$. 

For each such $(\vy,\eta)$, there is a second preimage 
under $\Pi_L$, $\hlambda = (\vy,\eta, \hvx,\hxi)\in \Cc$ where $\hvx =
\hvx(\vy,\vx)$ is given by \eqref{def:hvx}.

If $f\in \Ec'(\rn)$, $(\vx,\xi)\in \wf(f)\cap
\Pi_R(\Cc)$, and $(\vx,\xi)\in N^*(S(\vy))$, then $(\hvx(\vy,\vx),\hxi)$ can be in $\wf(R^*R^\dagger f)$, even
if $(\hvx(\vy,\vx),\hxi)\notin \wf(f)$.
\end{corollary}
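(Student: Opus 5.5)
We work entirely in the global coordinates $(\vy,\omega,\sigma)$ of \eqref{def:C} from Proposition \ref{fio_prop}.

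\emph{Preimages under $\Pi_R$.} Fix $(\vx,\xi)\in\Pi_R(\Cc)$. A preimage $(\vy,\omega,\sigma)$ must satisfy
\[
\vx=\vy+r(\vy)\omega,\qquad \xi=-2\sigma r(\vy)\omega .
\]
So $\omega=\pm\xi/\norm{\xi}$, and the sign of $\sigma$ is then forced. This gives $\vy=\vx-r(\vy)\omega$ with $\omega=\pm\xi/|\xi|$.

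Consider, for each fixed sign, the function $g(t)=t-r(\vx-t\omega)$ for $t>0$. Its derivative is
\[
g'(t)=1+\nabla_\vy r\cdot\omega>0
\]
by \eqref{norm1}, so $g$ is strictly increasing and has at most one zero. Hence each sign contributes at most one preimage, which gives one or two preimages in total. This monotonicity argument is the only step with real content. I expect it to be the main (mild) obstacle, because the statement is written without explicitly invoking \eqref{norm1}. I will assume, as in the surrounding results, that \eqref{norm1} is in force, so that Theorems \ref{thm:PiL-2-to-1} and \ref{inj_thm} are available.

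\emph{Conormality and the second preimage.} For any preimage $\lambda=(\vy,\eta;\vx,\xi)$, we have $\vx\in S(\vy)$ and $\xi$ parallel to $\vx-\vy$, which is the normal to the sphere $S(\vy)$ at $\vx$. Therefore $(\vx,\xi)\in N^*(S(\vy))$, and by definition $\Pi_L(\lambda)=(\vy,\eta)$. Theorem \ref{thm:PiL-2-to-1} then supplies the second preimage $\hlambda=(\vy,\eta;\hvx,\hxi)$ of $(\vy,\eta)$ under $\Pi_L$, where $\hvx=\hvx(\vy,\vx)$ is as in \eqref{def:hvx}.

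\emph{Artifacts.} By the H\"ormander--Sato Lemma (Theorem \ref{thm:HS}) and \eqref{compo}, we have $\wf(R^*R^\dagger f)\subset \Cc^t\circ\Cc\circ\wf(f)$. Starting from $(\vx,\xi)\in\wf(f)$, apply $\Cc$ through $\lambda$ to reach $(\vy,\eta)$, which lies in $\Cc\circ\wf(f)$ and hence may lie in $\wf(R^\dagger f)$. Then apply $\Cc^t$ through $\hlambda$ to reach $(\hvx,\hxi)$. This shows $(\hvx,\hxi)\in\Cc^t\circ\Cc\circ\wf(f)$.

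The claim only says ``can be in'', so this inclusion in the permitted set suffices. No lower bound on the wavefront set is required, and the case $(\hvx,\hxi)\notin\wf(f)$ is then simply an artifact that the composition permits.
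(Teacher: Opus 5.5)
Your proposal is correct and follows essentially the same route as the paper. Your strict monotonicity of $g(t)=t-r(\vx-t\omega)$ under \eqref{norm1} is just the inline form of the Mean Value Theorem argument in Theorem \ref{thm:PiR-atMost2}, which the paper cites for the first paragraph; the remaining steps (conormality from the definition of $\Cc$, the second preimage from Theorem \ref{thm:PiL-2-to-1}, and the artifact claim via $(\hvx,\hxi)\in\Cc^t\circ\Cc\circ\wf(f)$) match the paper's proof, and you are right that \eqref{norm1} must be assumed, as it is throughout that section.
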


\begin{remark}\label{rem:artifact} We now explain the importance
of this corollary to the prediction of microlocal artifacts.

Let $f\in \Ec'(\rn)$, $\vx\in \rn$, $(\vx,\xi)\in \Pi_R(\Cc)\cap
\wf(f)$. Let $\vy\in Y$ such that $(\vx,\xi)\in N^*(S(\vy))$ (so, for
some $\eta$, $\lambda = (\vy,\eta,\vx,\xi)\in \Cc$). 

Corollary \ref{cor:artifact points} states that $(\hvx(\vy,\vx),\hxi)$
can be in $\wf(R^*R^\dagger f)$, even if $f$ is smooth near
$\hvx(\vy,\vx)$. For this reason, we will call $\hvx=\hvx(\vy,\vx) $
the \emph{artifact point} to $\vx$ on $S(\vy)$, or just \emph{an
artifact point to $\vx$}. 

Since there will be at most two points $\vy$ such that $(\vx,\xi)\in
N^*(S(\vy))$ by Theorem \ref{thm:PiR-atMost2}, there are at most two artifact points for each
$(\vx,\xi)\in \Pi_R(\Cc)$.

\bigskip

The proof of Theorem \ref{thm:PiL-2-to-1} shows that the point
$\hvx(\vy,\vx)$ in \eqref{def:hvx} is the second point of intersection
(besides $\vx$) of $S(\vy)$ and the line $L_{\vy,\vx}$ containing
$\vx$ and the point $\vz = \vy-r(\vy)\nabla  r(\vy)$. 

The point $\hvx$ is especially easy to describe when $r(\vy) = c$ is
constant. In this case, $\nablay r \equiv 0$ so if $\vx\in S(\vy)$,
then $L_{\vy,\vx}$ is the line through $\vy$ and $\vx$, so $\hvx = \vx
- 2(\vx-\vy)$ is the antipodal point on $S(\vy)$ to $\vx$. This
transform is well-studied, e.g., in \cite{john2004plane}. The
microlocal properties of this transform were worked out in
\cite{Q1993mor} on Riemannian manifolds. We will examine this
transform more in section \ref{sec:URT}.
\end{remark}

\begin{proof}[Proof of Corollary \ref{cor:artifact points}] The conclusion of the first paragraph of the corollary
follows directly from Theorem \ref{thm:PiR-atMost2}. The conclusion of
the second paragraph follows directly from the definitions of $\Pi_R$
and $\Cc$. The conclusion of the third paragraph is in Theorem
\ref{thm:PiL-2-to-1}.

That $\lambda = (\vy,\eta,\vx,\xi)\in \Cc$ for some $\eta$ if and only
if $(\vx,\xi)\in N^*(S(\vy))$ can be seen from \eqref{def:C} since
$(\vy,\vx,\xi)$ determine the coordinates on $\Cc$. The statement
about $\Cc^t\circ \Cc$ follows from the definition of composition of
sets and Theorem \ref{thm:PiL-2-to-1}. \end{proof}


Now that we have defined artifact points, we define artifact sets.

\begin{definition}\label{def:hAc} Let $r:\rn\to(0,\infty)$ be a smooth function that
satisfies \eqref{norm1}. Let $\Gamma\subset T^*(\rn)$. Define
$\hAc(\Gamma)$ to  be the set of all artifact points for all
$(\vx,\xi)\in \Gamma\cap \Pi_R(\Cc)$.  that is,
\bel{hAc}\begin{aligned}\hAc(\Gamma) = &\big\{\hvx(\vy,\vx)\st
(\vy,\vx)\in \rn\times
\rn,\\
&\hspace{1cm} \exists s\in \dot{\rr}, 
(\vx,s(\vx-\vy))\in N^*(S(\vy))\cap
\Gamma\cap\Pi_R(\Cc)
\big\}.\end{aligned}
\ee
If $S$ is a subset of $\rn$, we define $\Ac(S) =
\hat{\Ac}(S\times \drn)$.
\end{definition}

 Corollary \ref{cor:artifact points} justifies why $\hAc$ (resp.\
$\Ac$) in $\eqref{hAc}$ is the set of possible microlocal artifacts
from singularities in $\Gamma$ (resp.\ $S$).


\begin{proposition}
\label{prop_geo} Assume the norm inequality \eqref{norm1} holds. Then,
$R$ satisfies the Bolker condition above the open set $\Omega$ {if and
only if ${\Ac(\Omega) \cap\Omega = \emptyset}$}. {In this
case,}
$R^*{R^\dagger}$ is a pseudodifferential operator.

If, in addition, $\Pi_R:\CcO\to \dot{T^*}(\Omega)$ is surjective and
$R^*$ and $R$ can be composed, then $R^*R$ is elliptic. 
\end{proposition}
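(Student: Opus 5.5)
The plan is to reduce the first claim to Theorem \ref{inj_thm}, then obtain the \psido statement from Corollary \ref{cor:bolker} together with the standard Bolker composition results \cite{GS1977, quinto}, and finally read off ellipticity from the symbol of the composition.

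\emph{Step 1: Bolker above $\Omega$.} By Theorem \ref{imm_thm}, \eqref{norm1} makes $\Pi_L$ an immersion on all of $\Cc$, hence on $\CcO$. So Bolker above $\Omega$ is equivalent to injectivity of $\Pi_L$ on $\CcO$. By Theorem \ref{inj_thm}, this holds if and only if for every $(\vy,\eta)\in\Pi_L(\Cc)$ at most one of the two preimage base points $\vx,\hvx$ lies in $\Omega$.

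\emph{Step 2: translate into $\Ac(\Omega)\cap\Omega=\emptyset$.} Suppose both $\vx$ and $\hvx(\vy,\vx)$ lie in $\Omega$. The covector $\lambda=(\vy,\eta;\vx,\xi)\in\Cc$ gives $(\vx,\xi)\in N^*(S(\vy))\cap(\Omega\times\drn)\cap\Pi_R(\Cc)$, with $\xi$ a nonzero multiple of $\vx-\vy$ by \eqref{def:C}. Definition \ref{def:hAc} then puts $\hvx$ in $\Ac(\Omega)$, so $\hvx\in\Ac(\Omega)\cap\Omega$.

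Conversely, suppose $\hvx(\vy,\vx)\in\Omega$ with $\hvx\in\Ac(\Omega)$ arising from some $\vx\in\Omega$ and $(\vx,s(\vx-\vy))\in N^*(S(\vy))\cap\Pi_R(\Cc)$. By \eqref{def:C} there is a $\lambda\in\Cc$ over $(\vx,s(\vx-\vy))$. By Theorem \ref{thm:PiL-2-to-1}, its companion $\hlambda$ sits over $\hvx$ with the same $\Pi_L$-image. Both therefore lie in $\CcO$, and injectivity fails.

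The one point needing care is that $\hvx\ne\vx$, so the two preimages really are distinct. This follows because $\vz=\vy-r\nabla r$ lies strictly inside $S(\vy)$ by \eqref{norm1}, and a line through an interior point meets the sphere in two distinct points.

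\emph{Step 3: the \psido claim.} Under Bolker, Corollary \ref{cor:bolker} gives $\CcO^t\circ\CcO\subset\Delta_\Omega$. I would then apply the Guillemin--Sternberg/Quinto theorem \cite{GS1977, quinto}. Its hypotheses are that $R^*$ and $R^\dagger$ compose, which the convention \eqref{def:R dagger} ensures, possibly after inserting the cutoff $h$, and that the canonical relation satisfies Bolker. Its conclusion is that the composition is a transversal (clean) composition, and the composite canonical relation lies in the diagonal. Hence $R^*R^\dagger$ is an FIO whose canonical relation is contained in $\Delta$, which is a \psido of order $-(n-1)$ for $f$ supported in $\Omega$.

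\emph{Step 4: ellipticity.} The principal symbol of $R^*R$ at $(\vx,\xi)$ is a sum over the preimages of $(\vx,\xi)$ under $\Pi_R$ in $\CcO$. There are at most two by Theorem \ref{thm:PiR-atMost2}. Each term is $|a|^2$ times a positive Jacobian factor coming from the clean composition. Since $a=2r(\vy)>0$, every term is positive, so the sum is positive whenever $(\vx,\xi)$ has at least one preimage, which surjectivity of $\Pi_R$ guarantees. Hence the symbol is nonvanishing on $\dot{T^*}(\Omega)$.

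The main obstacle I expect is making this last step rigorous with homogeneous bounds uniform on compacta. If $\CcO$ has two sheets, the symbol is a sum of two nonnegative terms, so there is no cancellation, but I need each Jacobian factor bounded below locally. I would get this by computing the Jacobian of $\Pi_R$ in the coordinates $(\vy,\omega,\sigma)$. It is nonzero exactly where $\Pi_R$ is a local diffeomorphism, which Bolker forces, since $\Pi_L$ is an immersion precisely when $\Pi_R$ is, both being projections from a canonical relation of equal dimensions.
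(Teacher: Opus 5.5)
Your proposal is correct and follows essentially the same route as the paper. The paper's proof also gets the immersion from Theorem \ref{imm_thm}, reduces injectivity on $\CcO$ via the two $\Pi_L$-preimages $\lambda,\hlambda$ to the condition that no artifact point of a point of $\Omega$ lies in $\Omega$, obtains the pseudodifferential claim from the Bolker composition theory of \cite{GS1977, quinto}, and gets ellipticity from surjectivity of $\Pi_R$ together with positivity of the symbols; you simply spell out both directions of the equivalence, the distinctness $\hvx\neq\vx$, and the no-cancellation symbol computation more explicitly than the paper's brief proof does.
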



\begin{proof}

{First, $\Pi_L:\CcO\to T^*(\YO)$ is an immersion by Theorem
\ref{imm_thm} as \eqref{norm1} holds.  
Therefore, we just need to check injectivity. Let $(\vy,\eta)\in
\Pi_L(\CcO)$ and let \[\lambda = (\vy,\eta,\vx.\xi)\in \CcO.\] By
Corollary \ref{cor:artifact points}, there are two preimages of
$(\vy,\eta)$ in $\Cc$, $\lambda$ with base point (in the $T^*(\Omega)$
coordinate) $\vx\in \Omega$ and another, $\hat{\lambda}$, with base
point $\hvx(\vy,\vx)\in \Ac(\Omega)$. Therefore, $\Pi_L$ is injective
if and only if all artifact points $\hvx(\vy,\vx)$ for all $\vx\in
\Omega$ are not in $\Omega$. This proves the first claim in the
proposition.} Therefore, ${R^*R^\dagger}$ is a \psido.

To prove the   statement about  ellipticity, we first observe that
$\CcO^t\circ \CcO = \Delta_\Omega$ since $\Pi_R$ is surjective.   Then, since
the symbols of $R$ and $R^*$ are positive and $R^*$ and $R$ can be
composed, $R^*R$ is an elliptic pseudodifferential operator.
\end{proof}

\noindent We now introduce the following definition related to inversion stability of $R$.

\begin{definition}
\label{stable_def} Assume the norm inequality \eqref{norm1}  holds.
Let $\Omega$ be an open set in $\rn$ and let $\vx\in \Omega$.


 Consider the following conditions:
\begin{enumerate}
\item \label{1} For every $\omega \in S^{n-1}$,
there exists a $\vy\in Y$ such that $S(\vy)$ intersects $\vx$ with
$\vx - \vy$ parallel to $\omega$. 

\item\label{2} $\vx \notin \mathcal{A}(\Omega)$.
\end{enumerate}
If \eqref{1} and \eqref{2} above hold for a specific $\vx\in \Omega$,
then we say the reconstruction is \emph{weakly stable} at $\vx$. If
\eqref{1} and \eqref{2} hold for every $\vx \in \Omega$, then we say
the reconstruction is \emph{weakly stable} {on
$\Omega$}, or {$R:\Ec'(\Omega)\to \Dc'(\YO)$ is \emph{weakly
stable}}.
\end{definition}

\begin{corollary}\label{cor:weak stability}
Let $\Omega$ be an open set. Reconstruction  (from
$R$) on $\Omega$ is weakly stable if and only if both  $\Pi_R:\CcO\to \Omega$ is surjective
and the Bolker condition holds. 

In this case $R^*{R^\dagger}$ is a \psido, and if
$R^*$ and $R$ can be composed, then $R^*R$ is an elliptic \psido\end{corollary}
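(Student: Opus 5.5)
The plan is to match the two conditions of Definition \ref{stable_def} with the two conditions in the corollary. Condition \eqref{1} will correspond to surjectivity of $\Pi_R:\CcO\to \dot{T^*}(\Omega)$, and condition \eqref{2} to the Bolker condition through Proposition \ref{prop_geo}. Once both equivalences are in hand, the final sentence of the corollary follows directly from Proposition \ref{prop_geo}.

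\emph{Step 1: condition \eqref{1} and surjectivity.} By the global parametrization \eqref{def:C} in Proposition \ref{fio_prop}, $\Pi_R(\vy,\omega,\sigma) = (\vy+r(\vy)\omega, -2\sigma r(\vy)\omega)$. Fix $\vx\in\Omega$ and a covector $\xi\neq 0$.
\begin{itemize}
\item Suppose \eqref{1} holds at $\vx$. Choose $\omega=\xi/|\xi|$. Condition \eqref{1} gives some $\vy$ with $\vx\in S(\vy)$ and $\vx-\vy$ parallel to $\omega$. Then $\vx-\vy=r(\vy)\omega'$ with $\omega'=\pm\omega$. Taking $\sigma=\mp|\xi|/(2r(\vy))$ gives $\Pi_R(\vy,\omega',\sigma)=(\vx,\xi)$.
\item Conversely, suppose $(\vx,\xi)$ lies in the image of $\Pi_R$ for $\xi=\omega$. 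Then $\vx=\vy+r(\vy)\omega'$ with $\omega'$ parallel to $\omega$, which is exactly \eqref{1}.
\end{itemize}
Running over all $\vx\in\Omega$, condition \eqref{1} holds on $\Omega$ if and only if $\Pi_R:\CcO\to \dot{T^*}(\Omega)$ is surjective. This is the sense of ``$\Pi_R:\CcO\to\Omega$ surjective'' in the statement.

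\emph{Step 2: condition \eqref{2} and Bolker.} Condition \eqref{2} holding for every $\vx\in\Omega$ says exactly that $\Ac(\Omega)\cap\Omega=\emptyset$. Since \eqref{norm1} is a standing assumption in Definition \ref{stable_def}, Proposition \ref{prop_geo} says this is equivalent to $R$ satisfying the Bolker condition above $\Omega$.

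\emph{Step 3: conclusion.} Steps 1 and 2 together give the stated equivalence. The \psido\ claim and the ellipticity of $R^*R$ (when $R^*$ and $R$ can be composed) then follow from Proposition \ref{prop_geo}, using the surjectivity from Step 1.

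\emph{Main obstacle.} The only subtle point is the sign bookkeeping in Step 1. Condition \eqref{1} only asks that $\vx-\vy$ be parallel to $\omega$, not that it equal $+r(\vy)\omega$. This is harmless, because in \eqref{def:C} the parameter $\sigma$ ranges over $\dR$, so either sign of $\omega'$ can be compensated by the sign of $\sigma$. One could also appeal to Proposition \ref{prop:real-valued}, but it is not needed here. I would also note that the domain $\CcO$ requires $\vx\in\Omega$ but places no restriction on $\vy$ beyond $\vy\in Y$, so the $\vy$ produced in Step 1 is admissible.
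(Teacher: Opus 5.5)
Your proposal is correct and follows essentially the same route as the paper. Condition \eqref{1} is matched with surjectivity of $\Pi_R:\CcO\to\dot{T^*}(\Omega)$ via the coordinates \eqref{def:C}, condition \eqref{2} with the Bolker condition via Proposition \ref{prop_geo} under the standing assumption \eqref{norm1}, and the final claims are read off from Proposition \ref{prop_geo}. Your treatment of both orientations $\omega'=\pm\omega$ through the sign of $\sigma$ is slightly more careful than the paper's, which only writes the preimage with $\omega=\xi/\norm{\xi}$ and $\sigma=-\norm{\xi}/(2r(\vy))$.
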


\begin{proof}  Let $\vx\in \Omega$.
Condition \eqref{1} is satisfied at $\vx$ if and only if for each
$\xi\in \dot{T^*_\vx}(\Omega)$, there is a $\vy\in \YO$ such that
$\vx\in S(\vy)$ and $\xi$ is conormal to $S(\vy)$ at $\vx$. This is
equivalent to
\[\forall \xi\in \dot{T^*_\vx}(\Omega),\ \exists \vy\in \YO,\
\Pi_R(\vy,\xi/\norm{\xi},\norm{\xi}/(-2r(\vy))) = (\vx,\xi)\] where we
are using coordinates \eqref{def:C} on $\CcO$. Therefore, \eqref{1}
holds for all $\vx\in \Omega$ if and only if $\Pi_R:\CcO\to
\dot{T^*}(\Omega)$ is surjective. Proposition \ref{prop_geo} implies
the equivalence of \eqref{2} with the Bolker condition, noting that we are also assuming a-priori that \eqref{norm1} holds.

The last statements follow directly from  Proposition \ref{prop_geo}.
\end{proof}


{When $R$ is injective and the reconstruction is weakly stable on
$\Omega$, as in Definition \ref{stable_def}, we will sometimes be
able to derive Sobolev continuity estimates for the forward and inverse
operators on $H^s(B)$ for any compact subset $B\subset\Omega$. We will
prove such estimates for each of the examples in the next section, using appropriately chosen smooth cutoffs, $h$, to compose $R^*$ and $R$ when needed.
That is, we will show the solution is bounded in Sobolev space
order $(n-1)/2$. If such a global Sobolev estimate exists, then we say
the solution is (strongly) \emph{stable} {on $\Omega$.} We now
explore some examples in the following section.

%% file: bolker-S3.2-new.tex
\subsection{Surjectivity of \texorpdfstring{$\Pi_R$}{Pi sub R} and analysis of the
normal operator} In order to better understand the normal
operator, we now discuss properties
of $\Pi_R$.
Note, we still assume $r:\rn\to (0,\infty)$ is smooth and satisfies
\eqref{norm1} on $\rn$ throughout this section.  

We will generalize our setup in a useful way. However, we consider
smaller domains; let $\Omega$ be an open subset of $\rn$ and let
$\CcO$ be the set of covectors above $\Omega$ (see \eqref{def:CcO}).
Let $Y_\Omega$ be the set of projections of $\CcO$ to the first
coordinate, that is \bel{def:YO}\YO = \sparen{\vy\in \rn\st \exists
(\eta,\vx,\xi)\in \drn\times \Omega\times \drn,\ (\vy,\eta,\vx,
\xi)\in \CcO}.\ee This definition, \eqref{def:YO}, implies that $\YO$
contains the centers of all spheres $S(\vy)$ that intersect $\Omega$
since $(\vy,\eta,\vx,\xi)\in \CcO$ if and only if $\vx\in S(\vy)$.
Because $\Pi_L$ and $\Pi_R$ are immersions by Theorem \ref{imm_thm},
$\YO$ is an open set.

\begin{theorem}\label{thm:PiR-atMost2} Let $r:\rn\to (0,\infty)$ be
smooth and satisfy \eqref{norm1} and let $\Omega$ be an open subset of
$\rn$. Let $(\vx,\xi)\in \Pi_R(\CcO)$. Then, there are at most two
preimages in $\CcO$ of $(\vx,\xi)$. For each preimage,
$\lambda=(\vy,\eta,\vx,\xi)$, $\vy$ is on the line
\[\ell_{(\vx,\xi)}=\vx+\rr\xi.\] If there are two preimages then they
are on opposite sides of $\vx$ on $\ell_{(\vx,\xi)}$.
\end{theorem}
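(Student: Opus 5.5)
The plan is to work directly in the global coordinates \eqref{def:C} on $\Cc$ from Proposition \ref{fio_prop}. We reduce the count of preimages of $(\vx,\xi)$ under $\Pi_R$ to counting the zeros of a scalar function of one real variable. Norm condition \eqref{norm1} will make that function strictly monotone on each side of $0$.

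\textbf{Reduction to the line $\ell_{(\vx,\xi)}$.} Let $\lambda=(\vy,\eta,\vx,\xi)\in\CcO$ have coordinates $(\vy,\omega,\sigma)$. By \eqref{def:C},
\[\vx=\vy+r(\vy)\omega, \qquad \xi=-2\sigma r(\vy)\omega.\]
Since $r>0$ and $\sigma\neq 0$, $\omega$ is parallel to $\xi$. Writing $\hat\xi=\xi/\norm{\xi}$, we have $\omega=\pm\hat\xi$, and $\vy=\vx-r(\vy)\omega$ lies on $\ell_{(\vx,\xi)}$. So we may write $\vy=\vx+t\hat\xi$ for some $t\in\rr$. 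Conversely, $t$ determines everything else:
\begin{itemize}
\item $\vy$ is determined by $t$;
\item $\omega=(\vx-\vy)/r(\vy)$;
\item $\sigma$ is fixed by $\xi=-2\sigma r(\vy)\omega$;
\item $\eta=-2\sigma r(\vy)(\omega+\naby r)$.
\end{itemize}
Hence preimages of $(\vx,\xi)$ in $\Cc$, and a fortiori in $\CcO$, correspond injectively to values $t$ satisfying the single condition $\vx\in S(\vy)$. That condition is $\norm{t}=r(\vx+t\hat\xi)$.

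\textbf{Counting zeros.} Set $g(t)=r(\vx+t\hat\xi)-\norm{t}$. First, $t=0$ is never a zero, because $g(0)=r(\vx)>0$.
\begin{itemize}
\item On $(0,\infty)$ we have $g'(t)=\naby r(\vx+t\hat\xi)\cdot\hat\xi-1$. This is strictly negative, since $\abs{\naby r}<1$ by \eqref{norm1}. So $g$ is strictly decreasing there and has at most one zero.
\item On $(-\infty,0)$ we have $g'(t)=\naby r\cdot\hat\xi+1>0$. So $g$ is strictly increasing there and again has at most one zero.
\end{itemize}
Thus there are at most two admissible $t$, hence at most two preimages. If there are two, one has $t>0$ and the other $t<0$, so the two centers $\vy$ lie on opposite sides of $\vx$ on $\ell_{(\vx,\xi)}$.

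The only step needing care is the bookkeeping that $t$ determines the entire preimage, including the sign of $\sigma$. This is immediate from \eqref{def:C}, since $\sigma$ ranges over all of $\dR$. The monotonicity argument is where \eqref{norm1} is used essentially, and it is routine once the problem is reduced to $g$.
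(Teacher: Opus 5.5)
Your proof is correct and is essentially the paper's argument. The paper applies the Mean Value Theorem to $\rfr(t)=r(\vx+t\omega)$ to show that two centers on the same side of $\vx$ would force $\abs{\nablay r}\geq 1$ somewhere, which is the same fact as your strict monotonicity of $g$ on each half-line; you also make explicit that $t$ (equivalently $\vy$) determines the whole preimage, which the paper leaves implicit when it concludes there are at most two preimages.
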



Example \ref{ex:CQ} provides a smooth $r:\rn\to (0,\infty)$
satisfying the  inequality \eqref{norm1} for which there are no
preimages of $\Pi_R$ above $\vx = 0$.

\begin{proof} {Let $(\vx,\xi)\in \Pi_R(\CcO)$, then there is at least one
preimage $\lambda_1=(\vy_1,\eta_1;\vx,\xi)$ to $(\vx,\xi)$. Then by
the definition of $\CcO$, \eqref{def:CcO}$, \vx\in S(\vy_1)$ and $\xi$
is normal to $S(\vy_1)$ at $\vx$. This shows $\vy_1$ is on the line
$\ell_{(\vx,\xi)}$.  

Let $\omega = \xi/\norm{\xi}$, and let $t_1\in \rr$ so that
$\vy_1=\vx+t_1\omega$. Assume there is a second preimage with
base point $\vy_2=\vx+t_2\omega$. We will draw a contradiction if
$\vy_1$ and $\vy_2$ are on the same side of $\vx$ on
$\ell_{(\vx,\xi)}$. Without loss of generality, we can assume
$0<t_1<t_2$. Note that $\vy_1$ and $\vy_2$ are in $\YO$ since $\YO$
contains the centers of all spheres $S(\vy)$ that meet $\Omega$.

We let \bel{def:rfr} \rfr:[0,\infty)\to[0,\infty)\ \ \rfr(t) =
r(\vx+t\omega).\ee Note that $\rfr$ is differentiable and that
$\rfr(t_j) = \norm{\vy_j-\vx}=t_j$ for $j=1,2$ since $\vx\in S(\vy_j)$
and $\omega$ is a unit vector.

A Mean Value Theorem argument on $\rfr$  shows for some $c\in
(t_1,t_2)$ that  
\[t_2-t_1=\rfr(t_2)-\rfr(t_1)=\rfr'(c)(t_2-t_1),\] so
$\abs{\rfr'(c)}=1$ and therefore, $\abs{\nablay r(\vx+c\omega)}\geq
\abs{\rfr'(c)}=1$. This contradicts assumption
\eqref{norm1}.}

Note, there clearly can not be any more than two preimages since there are only two sides of $\ell_{(\vx,\xi)}$ about $\vx$.\end{proof}


\noindent Our next theorem gives conditions under which $\Pi_R:\CcO\to
\dot{T^*}(\Omega)$ is surjective.

\begin{theorem}\label{thm:PiR-bound} Let $r:\rn\to(0,\infty)$ satisfy
the norm inequality \eqref{norm1} on $\rn$. Let $\Omega$ be an open
subset of $\rn$.  Assume there is a $C\in (0,1)$ such that the
\emph{strong norm inequality}\bel{strong norm} \norm{\nabla 
r(\vy)}<C\ \forall \vy\in \YO.\ee Let $(\vx, \xi)\in \Omega\times
\drn$, and let $\ell_{\vx,\omega}$ be the line through $\vx$ and
parallel $\xi$. Then there are two points $\vy_j\in
\ell_{\vx,\omega}\cap \YO$, for $j=1,2$ are on opposite sides of $\vx$
and $\vx\in S(\vy_j)$.

There are exactly two preimages
$\lambda_j$, $j=1,2$ of $(\vx,\xi)$ in $\CcO$. Furthermore, for some
$\eta_j\in \drn$, the two preimages, $\lambda_j$, $j=1,2$ satisfy
\[\lambda_j=(\vy_j,\eta_j;\vx,\xi)\in \CcO.\]   Therefore, $\Pi_R$ is
surjective.
\end{theorem}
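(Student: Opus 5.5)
Fix $(\vx,\xi)\in\Omega\times\drn$, set $\omega=\xi/\norm{\xi}$, and reduce everything to a one-variable problem on the line $\ell_{\vx,\omega}=\vx+\rr\omega$. As in the proof of Theorem \ref{thm:PiR-atMost2}, the plan is to consider
\[g(t)=r(\vx+t\omega)-\abs{t},\qquad t\in\rr.\]
A point $\vy=\vx+t\omega$ satisfies $\vx\in S(\vy)$ exactly when $g(t)=0$. Moreover, for such $\vy$ the vector $\vx-\vy=-t\omega$ is parallel to $\xi$, so $\xi$ is conormal to $S(\vy)$ at $\vx$. Using the coordinates \eqref{def:C}, the point $(\vy,\omega',\sigma)$ with $\omega'=(\vx-\vy)/r(\vy)=\mp\omega$ and $\sigma$ chosen so that $-2\sigma r(\vy)\omega'=\xi$ is then a preimage of $(\vx,\xi)$. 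Hence the whole task is to find one zero of $g$ with $t>0$ and one with $t<0$.

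On $[0,\infty)$ we have $g(0)=r(\vx)>0$. For $t>0$, $g'(t)=\nabla r(\vx+t\omega)\cdot\omega-1$. The key estimate is that $g'\le C-1<0$ as long as the point $\vx+t\omega$ lies where the strong norm inequality \eqref{strong norm} holds. Granting this on all of $[0,\infty)$, we get $g(t)\le r(\vx)-(1-C)t$, so $g$ becomes negative. The intermediate value theorem then gives a zero $t_1>0$, and strict monotonicity makes it unique. The identical argument on $(-\infty,0]$, where $g'(t)=\nabla r\cdot\omega+1\ge 1-C>0$, produces a unique zero $t_2<0$. Set $\vy_j=\vx+t_j\omega$. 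Then $\vx\in S(\vy_j)$, so $\vy_j\in\YO$ by the remark after \eqref{def:YO}, and the $\vy_j$ lie on opposite sides of $\vx$.

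The main obstacle is that \eqref{strong norm} is assumed only on $\YO$, not on the whole segment from $\vx$ out to the sought zero. I would handle this with a continuity argument. Let $T=\sup\{t>0:g>0\text{ on }[0,t)\}$. Every point $\vx+s\omega$ with $0\le s<T$ is the center of a sphere of radius $r>s$ about that center, and I would argue that such points belong to $\YO$. The reason is that $\Omega$ is open, so a point whose sphere encloses $\vx$ can be adjusted to a nearby sphere passing through $\Omega$: the function $\vy\mapsto r(\vy)-\norm{\vy-\vz}$ changes sign as $\vz$ moves radially away from $\vy$. If this membership fails, I would read the hypothesis as holding on a connected region containing the relevant segment, which is how it will be used in the examples. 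With the bound available on $[0,T)$, the estimate $g(t)\le r(\vx)-(1-C)t$ forces $T<\infty$, and $g(T)=0$ by continuity.

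For the count, Theorem \ref{thm:PiR-atMost2} already gives at most two preimages in $\CcO$, so the two constructed preimages $\lambda_j=(\vy_j,\eta_j;\vx,\xi)$ are exactly all of them. Here $\eta_j$ is read off from \eqref{def:C}. Since $(\vx,\xi)\in\Omega\times\drn$ was arbitrary, $\Pi_R:\CcO\to\dot{T^*}(\Omega)$ is surjective.
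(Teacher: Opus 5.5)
Your core argument is the paper's. The paper applies the Contraction Mapping Theorem to $\rfr(t)=r(\vx+t\omega)$ on $[0,\infty)$, and likewise to $t\mapsto r(\vx-t\omega)$, which yields existence and uniqueness of the fixed point in one step. Your intermediate-value-plus-monotonicity treatment of $g(t)=r(\vx+t\omega)-|t|$ is the same Lipschitz-constant-$C<1$ argument, and invoking Theorem \ref{thm:PiR-atMost2} for ``exactly two'' preimages is fine.

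The step that fails is your claim that $\vx+s\omega\in\YO$ for $0\le s<T$. Points of $\YO$ are centers of spheres that meet $\Omega$. If $r(\vx+s\omega)>s$, then $\vx$ lies strictly inside $S(\vx+s\omega)$, at distance $r(\vx+s\omega)-s$ from it. When $\Omega$ is small, nothing forces that sphere to reach $\Omega$, and since the sphere is fixed by its center there is nothing to ``adjust.''

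No argument can close this, because the statement is false as written. Take $r(\vy)=\sqrt{|\vy|^2+1}+1$, a variant of Example \ref{ex:CQ}. Then $|\nabla r|<1$, so \eqref{norm1} holds, but $r(\vy)>|\vy|+1$. Hence no sphere meets $\Omega=\{|\vx|<1\}$, so $\CcO=\YO=\emptyset$. Then \eqref{strong norm} holds vacuously, yet there are no preimages at all.

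The paper's proof carries the same hidden assumption. Calling $\rfr$ a contraction on all of $[0,\infty)$, for every $\vx$ and every direction, uses the gradient bound along entire rays, i.e.\ effectively on $\rn$, as in the remark after the theorem. So your fallback is not optional: state the strengthened hypothesis explicitly.

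A convenient version is to require \eqref{strong norm} on $\{\vy\in\rn : \exists\,\vz\in\Omega,\ |\vz-\vy|\le r(\vy)\}$. Each $\vx+s\omega$ with $0\le s<T$ lies in this set, because $\vx$ is inside its sphere. Your estimate $g(t)\le r(\vx)-(1-C)t$ then holds on $[0,T)$, so $T\le r(\vx)/(1-C)$ and $g(T)=0$. The negative side is symmetric, and uniqueness on each side follows from Theorem \ref{thm:PiR-atMost2}. This hypothesis is weaker than what the whole-half-line contraction argument uses, which is a modest advantage of your first-zero approach.
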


\noindent Note that the theorem holds for $\Omega  = \rn$ if \eqref{strong
norm} holds on $\rn$.


\begin{proof}
{Let $\Omega$ and $C$ be as in hypotheses of  the theorem and let
$(\vx,\xi)\in \Omega\times \drn$.
Let $\omega = \xi/\norm{\xi}$ and consider the function $\rfr$ given
by \eqref{def:rfr}. Then, $\rfr:[0,\infty)\to[0,\infty)$ is a
contraction mapping by the Chain Rule since $\norm{\nablay r(\vy)}\leq
C$. Since $[0,\infty)$ is complete, by the Contraction Mapping
Theorem, there is a unique $t_1\in [0,\infty)$ such that
$\rfr(t_1)=t_1$, and therefore $\vx\in S(\vx+t_1\omega)$. We let $\vy_1
= \vx+t_1\omega$. Note that $\vy_1\in \YO$ by the definition of $\YO$

We now make a similar argument for the function 
\[[0,\infty)\ni 
t\mapsto
\rfr(\vx-t\omega)\] to get a unique $t_2>0$ and a unique
$\vy_2=\vx-t_2\omega$ for which $\vx\in S(\vy_2)$ and $\vy_2\in \YO$.  

Let $\sigma \in \dR$ satisfy $\xi = \sigma\omega$. Using the
coordinates in \eqref{def:C} and then \eqref{def:CcO}, the points
\[\lambda_j =(\vy_j,\omega,-\sigma/(2r(\vy_j)), \ \ j=1,2\] define the two preimages of
$(\vx,\xi)$, $\lambda_j$, for $j=1,2$, and $\Pi_R$ is surjective and
two-to-one.}
\end{proof}



\begin{theorem}\label{thm:elliptic psido}  Let $r:\rn\to(0,\infty)$ be
smooth and satisfy \eqref{norm1} on $\rn$. Let $\Omega$ be an open
subset of $\rn$. Assume $\Omega$ satisfies the hypotheses of Theorem
\ref{inj_thm} and the strong norm inequality \eqref{strong norm} holds
on $\YO$. Then $R^*R:\Ec'(\Omega)\to \Dc'(\Omega)$ is an elliptic
\psido of order $1-n$.
\end{theorem}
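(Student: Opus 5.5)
We plan to derive the statement from the transverse intersection calculus for FIOs (Hörmander, Guillemin–Sternberg), using the results of this section. By Proposition \ref{fio_prop}, $R$ is an elliptic FIO of order $-(n-1)/2$ with canonical relation $\Cc$ parametrized by \eqref{def:C}. Its formal adjoint $R^*$ is therefore an elliptic FIO of the same order with canonical relation $\Cc^t$, and its symbol is again a positive multiple of $r(\vy)$ times the appropriate density factors. Restricting to distributions supported in $\Omega$ replaces $\Cc$ by $\CcO$.

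The first step is to make the composition legitimate. We show that $R^*R$ can be formed on $\Ec'(\Omega)$ without a cutoff: for $f\in\Ec'(\Omega)$ with support in a compact $B\subset\Omega$, $Rf$ is supported in the set of $\vy$ with $S(\vy)\cap B\neq\emptyset$. We want this set to be compact in $\YO$. Using \eqref{strong norm}, we have $r(\vy)\ge r(\vx)\cdot$ const $-C|\vy-\vx|$ and $r(\vy)\le r(\vx)+C|\vy-\vx|$. Hence $|\vy-\vx|=r(\vy)$ forces $|\vy-\vx|\le r(\vx)/(1-C)$, so $Rf$ has compact support. If $\YO$ is not all of $\rn$, we insert a cutoff $h$ equal to one on this compact set, as in \eqref{def:R dagger}; this does not change $R^*R f$.

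The second step is the clean/transverse composition. By Theorem \ref{imm_thm}, $\Pi_L$ is an immersion, and by Theorem \ref{inj_thm} it is injective on $\CcO$. Thus $\Pi_L$ is an injective immersion, which is the Bolker condition, and the standard result \cite{GS1977, quinto} gives that $\CcO^t\circ\CcO$ is a transverse composition contained in $\Delta_\Omega$ (Corollary \ref{cor:bolker}). Consequently $R^*R$ is a \psido{} of order $2\cdot(-(n-1)/2)=1-n$, since orders add under transverse composition.

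The third step is ellipticity. By Theorem \ref{thm:PiR-bound}, which uses \eqref{strong norm}, $\Pi_R:\CcO\to\dot{T^*}(\Omega)$ is surjective. Hence $\CcO^t\circ\CcO=\Delta_\Omega$ covers every $(\vx,\xi)$. The principal symbol at $(\vx,\xi)$ is a sum over the (two) preimages $\lambda_j$ of $(\vx,\xi)$ of the product of the principal symbols of $R^*$ and $R$ at $\lambda_j$, each multiplied by a positive Jacobian factor arising from $\Pi_L$. Since the amplitude $2r(\vy)>0$ and the half-density factors are positive, no cancellation occurs. The symbol is thus positive and homogeneous of degree $1-n$, i.e. elliptic; this is Proposition \ref{prop_geo} combined with Corollary \ref{cor:weak stability}.

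The main obstacle is the first step: ensuring properness, so that $R^*$ composes with $R$ on $\Ec'(\Omega)$. The support estimate above, derived from the Lipschitz bound on $r$ with constant $C<1$, is the key point. We would also verify that the symbol computation is uniform in $\sigma$, so that the resulting amplitude lies in $S^{1-n}$.
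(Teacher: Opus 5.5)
Your proposal is correct and has the same overall structure as the paper's proof. Both arguments show that $R$ maps $\Ec'(\Omega)$ into $\Ec'(\rn)$, so $R^*$ composes with $R$ without a cutoff. Both use the Bolker condition (Theorems \ref{imm_thm}, \ref{inj_thm}, Corollary \ref{cor:bolker}) to get $\CcO^t\circ\CcO\subset\Delta_\Omega$, invoke Theorem \ref{thm:PiR-bound} for surjectivity of $\Pi_R$ so that the composition is all of $\Delta_\Omega$, and conclude ellipticity from positivity of the weights.

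The only place you argue differently is the compact-support step. The paper uses the function $t(\vx,\omega)$, the unique $t>0$ with $\vx\in S(\vx+t\omega)$, whose existence comes from the contraction argument in Theorem \ref{thm:PiR-bound}. It shows $t$ is smooth by the Implicit Function Theorem and hence bounded on $B\times S^{n-1}$. You instead use $\abs{\vy-\vx}=r(\vy)\le r(\vx)+C\abs{\vy-\vx}$, which gives the explicit bound $\abs{\vy-\vx}\le r(\vx)/(1-C)$. Your route is shorter and quantitative, and it needs neither existence nor smoothness of $t$. The paper's route additionally yields the smooth parametrization $\vx+t(\vx,\omega)\omega$ of the centers that $R^*$ integrates over.

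Two small remarks. First, the lower bound you state for $r(\vy)$ is garbled, but it is not needed. Second, your Lipschitz estimate integrates $\nabla r$ along the segment $[\vx,\vy]$. Points of that segment near $\vx$ are centers of spheres enclosing $\vx$, and these spheres may miss a small $\Omega$, so those points need not lie in $\YO$. Strictly, then, you are using \eqref{strong norm} on a larger set than $\YO$. The paper does the same: its contraction argument in Theorem \ref{thm:PiR-bound} uses the bound along the entire ray $\vx+[0,\infty)\omega$. So this is not a weakness relative to the paper's proof.
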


Note that, if the hypotheses of Theorem \ref{thm:elliptic psido} hold,
then $R^*$ can be composed with $R$ without a cutoff. We provide details on this in the proof below.

 \begin{proof} Theorem \ref{thm:PiR-bound} shows that $\Pi_R$ is
surjective under our assumptions. Combining this with the result of
Corollary \ref{cor:bolker} shows that $\Cc_\Omega^t\circ \Cc_\Omega =
\Delta_\Omega$.

Now, we show $R:\Ec'(\Omega)\to\Ec'(\rn)$. This will allow us to
compose $R^*$ and $R$ without a cutoff. Let $\vx\in \Omega$
As shown in the proof
of Theorem \ref{thm:PiR-bound}, for each $(\vx, \omega)\in
\Omega\times \snm$, there is a unique  $t>0$ such that  $\vx\in
S(\vx+t(\vx,\omega)\omega))$. We denote this value by
$t=t(\vx,\omega)$.  Now,  $t=t(\vx,\omega)$ satisfies
\[t-r(\vx+t\omega)=0.\] 
Because of \eqref{strong norm}, \[\frac{\partial}{\partial
t}\paren{t-r(\vx+t\omega)} = 1 - \ip{\nablay r(\vx+t\omega),\omega}
\neq 0
\] and by
the Implicit Function Theorem, $t=t(\vx,\omega)$ must
be smooth. Although the Implicit Function Theorem is local, because
$(\vx,\omega)\mapsto t(\vx,\omega)$ is a well defined function
globally, this function is smooth globally.


Since the function $t(\vx,\omega)$ is continuous, $t(\vx,\omega)$ is
bounded on compact subsets of $\rn\times \snm$. Therefore, if $f\in
\Ec'(\Omega)$, then the set of sphere centers meeting $\supp(f)$,
$\{\vy\st S(\vy)\cap \supp(f)\neq \emptyset\}$, is bounded. This shows
that $R:\Ec'(\Omega)\to \Ec'(\rn)$.

Therefore, $R^*$ can be composed with $R$, and $R^*R:\Ec'(\Omega)\to
\Dc'(\Omega)$. Since $\Pi_R$ is surjective, $\Cc_\Omega^t\circ \Cc_\Omega =
\Delta_\Omega$, and since the weights for $R$ and $R^*$ are smooth and
positive, $R^*R$ is an elliptic pseudodifferential operator (e.g.,
\cite{quinto}).\end{proof}

Note that once we know the normal operator, $R^*R$, is an elliptic
pseudodifferential operator and that $R^*R$ is injective, we can
sometimes prove the reconstruction is stable in Sobolev scales using
Lemma \ref{stef}. Later, in section \ref{examples}, we will prove
such Sobolev estimates for $R$ in examples of interest in Compton CT
and ultrasound imaging.

\begin{example}\label{ex:CQ} We now provide an example where
ellipticity of $R^*R$ fails on $\rn$. Specifically, we provide an
example $r$ in which $\norm{\nabla r(\vy)}<1$ for all $\vy\in \rn$ but
no sphere $S(\vy)$ contains zero. Let $r(\vy) = \sqrt{|\vy|^2 + 1}$.
This provides an example when the hypotheses of Theorem
\ref{thm:PiR-atMost2} hold and there are no preimages of $\Pi_R$ above
$\zero$. This also shows that some stronger hypothesis than
\eqref{norm1} is needed for $R^*R$ to be an elliptic \psido.

Since $\abs{r(\vy)}>\norm{\vy}$ for all $\vy\in \rn$, no sphere
$S(\vy)$ contains $\zero$, so
$\Pi_R^{-1}\paren{\sparen{\zero}\times\drn}=\emptyset$ and $\Pi_R$ is
not surjective.  It is also easy to check that $|\nabla r| < 1$.

This shows that $R^*R$ cannot be an elliptic \psido. If $\vp\in
\Dc(\rn)$ then $R^*\vp R$ is a \psido, but it is not elliptic.

\end{example}

%% file: example4.1.tex
\section{Applications in CST and URT} \label{examples} In this
section, we explore some motivating examples in CST and URT. In each
example considered, we explain the applications of our microlocal
theory and then provide injectivity proofs and inversion formulae. In
some cases, we also supply Sobolev estimates of the form explained
above.

\subsection{Linear translation CST}\label{sec:linearCST} Here, we
consider a variant of the transform proposed in
\cite{webber2024generalized} (see figure 3(a)), which describes the
measurement acquisition for a linear array CST system. The system
geometry is displayed in figure \ref{fig1}. The physical modeling in
2-D CST constrains photon scatter to circular arcs which pass through
the source and detector (see figure \ref{fig1}), and thus, the Compton
scatter intensity can modeled as integrals of $f$ over circular arcs
\cite{RigaudComptonSIIMS2017}. As pictured in figure \ref{fig1}, the
density $f$ is supported on $\{x_2 >2\}$ (above the $x_1$ axis) and
the source and detector array is translated in the $x_1$ direction
(equivalently one could imagine this as translation of $f$).
Meanwhile, $f$ is illuminated by photons emitted from $\vs$ and
measured at $\vd$. The scattered energy determines $y_2$, and the
translation position determines $y_1$, where $\vy = (y_1,y_2)$ is the
circle center. In this case, the radius function is given by $r(\vy) =
\sqrt{y_2^2+\alpha^2}$, where $2\alpha > 0$ is the distance between
$\vs$ and $\vd$, which ensures that $S(\vy)$ intersects $\vs =
(y_1-\alpha,0)$ and $\vd = (y_1+\alpha,0)$. When $n=2$ and $r(\vy) =
\sqrt{y_2^2+\alpha^2}$, $Rf$ models the Compton scatter data measured
by the scanner system in figure \ref{fig1}. Let 
\[\Omega = \rr\times (0,\infty)\]As we assume
$\text{supp}(f)\subset \Omega$, integrating $f$ over the circular arc
$S(\vy)\cap \Omega$ and the circle $S(\vy)$ is equivalent. 
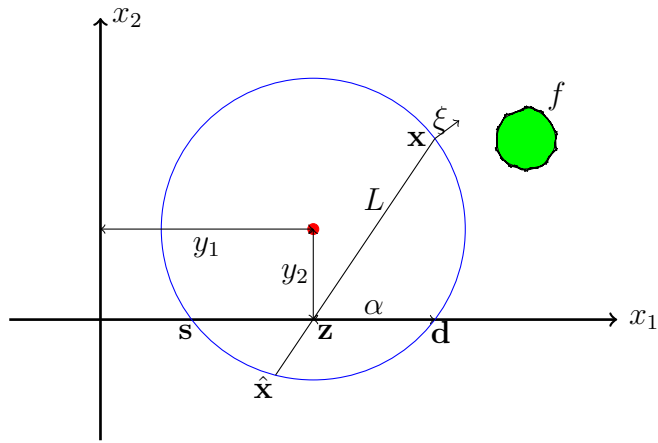
\begin{figure}[!h]
\centering
\begin{tikzpicture}[scale=0.8]
\draw [red,fill=red] (0,1.5) circle (0.09); \draw[fill=green,rounded
corners=1mm] (3.5,3) \irregularcircle{0.5cm}{0.3mm}; \node at (4,3.7)
{$f$}; \draw [->,line width=1pt] (-5,0)--(5,0)node[right] {$x_1$};
\draw [->,line width=1pt] (0-3.5,-2)--(0-3.5,5)node[right] {$x_2$};
\draw [blue] (0,1.5) circle (2.5); \draw [<->]
(0-3.5,1.5)--(3.5-3.5,1.5); \node at (1.75-3.5,1.5-0.3) {$y_1$}; \draw
[<->] (0,0)--(2,0); \node at (1,0.2) {$\alpha$}; \draw [<->]
(0,0)--(0,1.5); \node at (-0.3,0.75) {$y_2$}; \node at (-2.1,-0.2)
{$\vs$}; \node at (2.1,-0.2) {$\vd$}; \node at (1.7,3) {$\vx$}; \node
at (0.2,-0.2) {$\vz$}; \draw (2,3)--(-0.62,-0.92); \node at (1,2)
{$L$}; \node at (-0.82,-1.12) {$\hat{\vx}$}; \draw [->]
(2,3)--(2.4,3.3)node[left]{$\xi$};
\end{tikzpicture}
\caption{Example CST scanner design. Photons scatter on circular arcs
bound by the source $\vs$ and detector $\vd$. } \label{fig1}
\end{figure}

\noindent We consider the $n=2$ case as this is of practical interest
in CST. When $n\geq 3$, the integral surfaces in CST are tori
\cite{webberholman}, and thus do not fit this general framework.

\begin{theorem}
\label{thm_cst_1} Let $r(\vy) = \sqrt{y_2^2+\alpha^2}$ and let $\Omega
= \rr\times (0,\infty)$. Then, $R:\Ec'(\Omega)\to \Dc'(\rtwo)$ is an
elliptic FIO which satisfies the Bolker condition.
\end{theorem}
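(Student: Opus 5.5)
The plan is to reduce everything to the general results of Section~\ref{bolker}: Proposition~\ref{fio_prop} for the FIO and ellipticity statement, Theorem~\ref{imm_thm} for the immersion property, and Theorem~\ref{inj_thm} for injectivity of $\Pi_L$ on $\CcO$. Only one computation is specific to this radius function: the gradient $\nabla_\vy r$, which also determines the point $\vz$ used to build the mirror points.

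First I compute
\[\nabla_\vy r(\vy) = \paren{0,\ \frac{y_2}{\sqrt{y_2^2+\alpha^2}}}, \qquad \norm{\nabla_\vy r(\vy)} = \frac{\abs{y_2}}{\sqrt{y_2^2+\alpha^2}}<1 \quad \forall \vy\in\rtwo,\]
using $\alpha>0$. Since $r>0$ is smooth on $\rtwo$, the hypothesis $\abs{\nabla_\vy r}\neq 1$ of Proposition~\ref{fio_prop} holds. Hence $R$ is an elliptic FIO of order $-1/2$. Restricting the domain to $\Ec'(\Omega)$ only restricts the canonical relation to $\CcO$, so $R:\Ec'(\Omega)\to\Dc'(\rtwo)$ is still an elliptic FIO. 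Because the norm inequality \eqref{norm1} holds, Theorem~\ref{imm_thm} shows that $\Pi_L$ is an immersion.

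Next I identify the mirror points. By Theorem~\ref{thm:PiL-2-to-1}, the artifact point $\hvx(\vy,\vx)$ is the second intersection of $S(\vy)$ with the line through $\vx$ and $\vz=\vy-r(\vy)\nabla_\vy r(\vy)$. Here $r\,\nabla_\vy r=(0,y_2)$, so $\vz=(y_1,0)$ lies on the $x_1$-axis; geometrically it is the midpoint of $\vs$ and $\vd$. Because $\norm{\nabla_\vy r}<1$, the point $\vz$ lies strictly inside $S(\vy)$. Therefore $\vx$ and $\hvx$ are the endpoints of a chord through the interior point $\vz$, and they lie on opposite sides of $\vz$.

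Now take $\vx=(x_1,x_2)\in\Omega\cap S(\vy)$, so $x_2>0$. I parametrize the chord as $\vx+t(\vz-\vx)$. This passes through $\vx$ at $t=0$ and through $\vz$ at $t=1$, so $\hvx$ corresponds to some $t>1$. The second coordinate of the chord is $x_2(1-t)$, which is negative at that parameter value. Thus $\hvx\notin\Omega$, i.e.\ $\Ac(\Omega)\cap\Omega=\emptyset$. By Theorem~\ref{inj_thm} (equivalently Proposition~\ref{prop_geo}), $\Pi_L$ is injective on $\CcO$, and combined with the immersion property this gives the Bolker condition.

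The only point that needs care is the claim that $\hvx$ lies beyond $\vz$ rather than between $\vx$ and $\vz$. This is exactly the fact that $\vz$ is interior to the circle, which follows from \eqref{norm1} as already noted in the proof of Theorem~\ref{thm:PiL-2-to-1}.
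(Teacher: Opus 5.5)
Your proof is correct and follows the paper's argument essentially step for step: compute $\nabla_\vy r$, invoke Proposition~\ref{fio_prop} and Theorem~\ref{imm_thm}, observe $\vz=(y_1,0)$, and conclude $\Ac(\Omega)\subset\{x_2<0\}$, so Bolker holds by Proposition~\ref{prop_geo} (equivalently Theorem~\ref{inj_thm}). The only difference is that you justify this last inclusion explicitly, via the chord $\vx+t(\vz-\vx)$ with $\hvx$ at some $t>1$ because $\vz$ is interior to $S(\vy)$, whereas the paper simply asserts it with reference to Figure~\ref{fig1}.
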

\begin{proof}
We have, $\nabla_{\vy}r = (0,y_2/\sqrt{y_2^2+\alpha^2})$, and
$|\nabla_{\vy}r| = |y_2|/\sqrt{y_2^2+\alpha^2}<1$. Thus, $R$ is an
elliptic FIO by Proposition \ref{fio_prop} and $\Pi_L$ is an immersion
by Theorem \ref{imm_thm}. The point $\vz = \vy - r\nabla_{\vy}r =
(y_1,0)$, and thus if $\vx \in \Omega$, the corresponding artifact
locations are $\hat{\vx} \in \{x_2 < 0\}$, as illustrated in figure
\ref{fig1}. Therefore, $\mathcal{A}(\Omega) \subset \{x_2 <0\}$. As
$\Omega= \{x_2>0\}$, $\Omega\cap \mathcal{A}(\Omega) = \emptyset$ and
thus $\Pi_L$ is also injective by Proposition \ref{prop_geo}.
\end{proof}

\begin{remark}
If we constrain $y_2 < 0$ (i.e., we focus only on scatter at $<
90^{\circ}$, or forward scatter), then Theorem \ref{thm_cst_1} follows
from the generalized theory of \cite{webber2024surface}, as when $y_2
< 0$, the circular arcs $S(\vy) \cap \{x_2 > 0\}$ are graphs. Theorem
\ref{thm_cst_1} and the microlocal theory presented here allows us to
consider all $y_2 \in \mathbb{R}$ (i.e., both forward and backscatter
simultaneously). This has further implications on detection of
wavefronts and stability, which we now explore.
\end{remark}

\begin{theorem}
\label{wave_cst_1} Let $n = 2$ and $r(\vy) = \sqrt{y_2^2+\alpha^2}$.
Then $\Pi_R:\Cc_\Omega\to T^*(\Omega)\smo$ is surjective.

If $\xi =(0,\xi_2),\ \xi_2\neq 0$, then for every $\vx\in \Omega$,
there is one preimage of $(\vx,\xi)$, and there two preimages for
every other covector in $\dot{T^*}(\Omega)$.
\end{theorem}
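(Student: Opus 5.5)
The plan is to parametrize preimages exactly as in the proof of Theorem \ref{thm:PiR-atMost2}, and to solve the resulting equation explicitly; the strong norm inequality \eqref{strong norm} fails here because $|\nabla r|\to 1$ as $|y_2|\to\infty$, so Theorem \ref{thm:PiR-bound} cannot be invoked directly. Fix $(\vx,\xi)\in\dot{T^*}(\Omega)$, so $x_2>0$, and let $\omega=\xi/|\xi|=(\omega_1,\omega_2)$. By \eqref{def:C}, a preimage corresponds to a center $\vy=\vx+t\omega$, $t\in\rr\setminus\{0\}$, with $\vx\in S(\vy)$. This amounts to $t^2=r(\vy)^2$ with $r(\vy)>0$, and one must also have $|t|=r$; equivalently, $t^2=(x_2+t\omega_2)^2+\alpha^2$.

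Expanding gives the equation
\[
t^2(1-\omega_2^2)-2t x_2\omega_2-(x_2^2+\alpha^2)=0 .
\]
Every real root automatically satisfies $|t|=\sqrt{(x_2+t\omega_2)^2+\alpha^2}=r(\vy)>0$, so it yields a genuine preimage via the coordinates $(\vy,\omega,\sigma)$ with $\sigma$ determined by $\xi=-2\sigma r(\vy)\omega$, or with $-\omega$ used when $t<0$ to fix signs, as in the proof of Theorem \ref{thm:PiR-bound}. The centers $\vy$ are arbitrary points of $\rtwo$, so no constraint from $\YO$ arises beyond $\vy$ being the center of a circle through $\vx\in\Omega$.

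Two cases then finish the count. If $\omega_2\neq\pm1$, i.e. $\xi$ is not vertical, the equation is a genuine quadratic. Its constant term $-(x_2^2+\alpha^2)<0$ and leading coefficient $1-\omega_2^2>0$, so the roots are real, distinct, and of opposite sign, giving exactly two preimages on opposite sides of $\vx$, consistent with Theorem \ref{thm:PiR-atMost2}. If $\xi=(0,\xi_2)$, so $\omega_2=\pm1$, the equation becomes linear: $-2tx_2\omega_2=x_2^2+\alpha^2$. This has the single root $t=-\omega_2(x_2^2+\alpha^2)/(2x_2)$, which is nonzero since $x_2>0$, hence exactly one preimage. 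This is the center below or above on the vertical line; the other direction is lost because $r(\vy)-|\vy-\vx|$ tends to $\alpha^2/(2|y_2|)$ and never vanishes.

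Surjectivity follows because every covector has at least one preimage. The only delicate step is the bookkeeping translating a root $t$ of either sign into a point of $\CcO$ with the correct sign of $\sigma$. I will handle it by noting that $\Pi_R$ of \eqref{def:C} is invariant under $(\omega,\sigma)\mapsto$ the appropriate choice making $-2\sigma r(\vy)\omega=\xi$ with $\vx-\vy=r(\vy)\omega$.
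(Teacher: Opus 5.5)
Your proof is correct and is essentially the paper's argument. The paper solves the same quadratic for the center on the normal line through $\vx$, but treats outer normals ($\vy=\vx-t\xi$, $t>0$) and inner normals ($\vy=\vx+t\xi$, $t>0$) separately, writing the positive root explicitly as $t(\varphi,x_2)$ and noting it fails to exist only at $\xi=(0,-1)$ (resp.\ $(0,1)$); your single signed parameter, together with the sign of the product of the roots, gives the exact count in one step. One small inaccuracy is in your aside on the lost direction, where the center $\vy$ lies directly above $\vx$: there $r(\vy)>y_2>|\vy-\vx|$, and $r(\vy)-|\vy-\vx|\to x_2$ rather than $\alpha^2/(2|y_2|)$, but nothing depends on this because the linear equation already gives uniqueness.
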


\begin{proof}   
From \eqref{def:C}, we see that $\Pi_R:\Cc_\Omega\to T^*(\Omega)$ is
given in coordinates by 
\[\rtwo\times S^1\times \dR\ni(\vy,
\omega,\sigma)\mapsto\paren{\vy,-2\sigma r(\vy)(\omega + \nabla 
r);\vy+r(\vy)\omega, -2\sigma r(\vy)\omega}.\]

We parameterize $S^1$ by
\[\xi=\xi(\varphi) = (\cos (\varphi),\sin(\varphi))\] for $\varphi \in
[-\pi/2,3\pi/2]$.

Let $(\vx,\xi)\in \Omega\times S^1 $. If $\xi$ is an outer normal at
$\vx$ to a circle, the circle can be written $S(\vx-t\xi)$ for some
$t>0$, and we now solve for $t>0$ for which this circle is in our data
set:
\begin{equation}
\label{quad} \vx-t\xi = \paren{y_1,\sqrt{t^2-\alpha^2}}.
\end{equation} For any $\vx\in\{x_2>0\}$ and $\xi \in S^1\backslash
\{(0,-1)\}$, we can solve the quadratic in the second argument of
\eqref{quad} 
\begin{equation}\label{def:t}
t(\varphi,x_2) =
\frac{\alpha^2+x^2_2}{ x_2\sin(\vp) +
\sqrt{\alpha^2\cos^2(\vp)+x_2^2 }}.
\end{equation}
When $\varphi = 3\pi/2$ then there is no solution for $t(3\pi/2,\vx)$.
This is clear from \eqref{def:t} and geometrically because any circle
containing a point $\vx\in \Omega$ with outer normal $(-1,0)$ will be
disjoint from the horizontal axis.

Therefore, for $\xi\neq (0,-1)$, $\xi$ is the outer unit normal of the
circle in the data set with center $\vy = \vx -t(\vp,\vx)\xi$ and
radius $r(\vy)$. Using \eqref{def:C}, we see that \[(\vy,-2\sigma
r(\vy)(\xi+\nabla_\vy r); \vx,\xi)\] is one preimage of $(\vx,\xi)$ in
$\Cc(\Omega_0)$ when $\sigma = -1/2 r(\vy)$. However, because there is
no circle in the data set with outer normal $(0,-1)$, there is no
preimage of $(\vx,(0,-1))$ in which $(0,-1)$ is an outer normal to the
circle.

If we do the same calculation for inner normals, we solve $\vy =
\vx+\tilde{t}(\vp,\vx)$ for $\tilde{t}>0$ and see for $\vp\in
(-3\pi/2,\pi/2)$ that $\tilde{t}(\vp, \vx) = t(\vp +\pi,\vx)$. A
similar argument to the one for outer unit normals shows that for
every $\xi\in S^1$ except $(0,1)$, there is a preimage of $\Pi_R$ for
which $\xi$ is an inner normal to the circle with center $\vy =
\vx+\tilde{t}(\vp,\vx)$. 

Finally, since $\Cc_\Omega$ is a conic set, this
shows that $\Pi_R(\Cc_\Omega)=T^*(\Omega)\smo$. This completes the
proof.\end{proof}

 \begin{remark}\label{rem:not strong norm} We note that since
$\lim_{|y_2|\to \infty}|\nabla_\vy r| = \lim_{|y_2|\to
\infty}|y_2|/\sqrt{y_2^2+\alpha^2} = 1$, the strict norm inequality
\eqref{strong norm} does not hold. We have thus addressed detection of
wavefronts and surjectivity of $\Pi_R$ in the $n=2$ case using Theorem
\ref{wave_cst_1}. Also, see Remark \ref{rem:strongish norm} below.
\end{remark}

\begin{corollary}
Let $n = 2$ and $r(\vy) = \sqrt{y_2^2+\alpha^2}$ with $f\in
\Ec'(\Omega)$. Then, the reconstruction of $f$ from $Rf$ is weakly stable
everywhere in $\Omega$.
\end{corollary}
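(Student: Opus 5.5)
The plan is to combine Corollary \ref{cor:weak stability} with the two results just proved for this geometry. By that corollary, weak stability on $\Omega$ is equivalent to two facts: $\Pi_R:\CcO\to \dot{T^*}(\Omega)$ is surjective, and the Bolker condition holds above $\Omega$. Equivalently, by Definition \ref{stable_def}, for each $\vx\in\Omega$ we need condition \eqref{1} (every direction is conormal to some circle through $\vx$ in the data set) and condition \eqref{2} ($\vx\notin\Ac(\Omega)$). The standing hypothesis \eqref{norm1} holds because $|\nabla_\vy r| = |y_2|/\sqrt{y_2^2+\alpha^2}<1$, as computed in the proof of Theorem \ref{thm_cst_1}.

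For condition \eqref{1}, I will invoke Theorem \ref{wave_cst_1}, which gives $\Pi_R(\Cc_\Omega)=T^*(\Omega)\smo$. Concretely, for $\vx\in\Omega$ and $\omega\in S^1$, the covector $(\vx,\omega)$ has a preimage $(\vy,\eta;\vx,\omega)\in\CcO$. By \eqref{def:C}, this means $\vx=\vy+r(\vy)\omega'$ with $\omega'=\pm\omega$, so $\vx-\vy$ is parallel to $\omega$. One should check that such $\vy$ lies in $\YO$, but this is automatic from the definition \eqref{def:YO}, since $\YO$ contains all centers of circles meeting $\Omega$.

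For condition \eqref{2}, I will use the artifact description from Theorem \ref{thm_cst_1}. There $\vz=\vy-r\nabla_\vy r=(y_1,0)$ lies on the $x_1$-axis and strictly inside $S(\vy)$, by \eqref{norm1} as in the proof of Theorem \ref{thm:PiL-2-to-1}. Hence the line through $\vx\in\{x_2>0\}$ and $\vz$ meets $S(\vy)$ a second time at a point $\hvx$ on the other side of $\vz$. That is the side below the axis, so $\hvx\in\{x_2<0\}$. This gives $\Ac(\Omega)\subset\{x_2<0\}$, and so $\vx\notin\Ac(\Omega)$ for every $\vx\in\Omega$. Equivalently, Proposition \ref{prop_geo} gives the Bolker condition, which Theorem \ref{thm_cst_1} already records.

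Since both conditions hold at every $\vx\in\Omega$, the reconstruction is weakly stable on $\Omega$. The only mildly delicate point is the claim that $\hvx$ lies strictly below the axis. This requires $\vz$ to be strictly between $\vx$ and $\hvx$ on the chord, which follows because $\vz$ is an interior point of the disk bounded by $S(\vy)$ and the chord through an interior point separates its two endpoints. Everything else is a direct citation, and no cutoff issues arise, since weak stability is purely a microlocal condition.
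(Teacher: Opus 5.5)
Your proposal is correct and follows essentially the same route as the paper: it gets condition (1) of Definition \ref{stable_def} from the surjectivity of $\Pi_R$ in Theorem \ref{wave_cst_1}, and condition (2) from the fact, established in the proof of Theorem \ref{thm_cst_1}, that $\Ac(\Omega)\subset\{x_2<0\}$. Your additional check that $\vz=(y_1,0)$ lies strictly inside $S(\vy)$, and hence strictly between $\vx$ and $\hvx$ on the chord, makes explicit a step the paper leaves to the figure.
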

\begin{proof}
This follows immediately from Theorems \ref{thm_cst_1} and
\ref{wave_cst_1} and Definition \ref{stable_def}.
\end{proof}

Let $b>0$ be arbitrary and let $a\in (0,b)$. We consider functions
supported in
\bel{def:Om0}\Omega_0 =
\{(x_1,x_2)\in \rtwo\st a< x_2 <b\}.\ee Since we are interested in
compactly supported functions and distributions in $\Omega=\{x_2>0\}$,
any such function will be supported in $\Omega_0$ for some choice of
$a,b$.


To ensure that our transform maps $\Ec'(\Omega_0)$ to $\Ec'(\rtwo)$,
we include a cutoff for $R$. Let $\eps>0$ and let $h:\rr\to[0,1]$ be a
smooth function supported in $(-\infty,b+\eps/2)$ and equal to one on
$(-\infty,b+\eps/4]$. For $f\in \Ec'(\Omega_0)$ let \bel{def:tR} R^\dagger
f(\vy) = h(y_2)Rf(\vy).\ee 
Now, let \bel{def:omega1}\Omega_1 =
\sparen{(y_1,y_2)\in \rtwo\st y_2<b+\eps},\ee then in Theorem
\ref{ell_thm_1}, we will show $R:\Ec'(\Omega_0)\to\Ec'(\Omega_1)$.

We let \bel{def:lambda}\lambda : (-\pi/2,3\pi/2) \times\{x_2>0\}\to
\mathbb{R}^2, \qquad\lambda(\varphi,\vx) = \vx - t(\varphi,x_2)\xi,\ee
where $t(\varphi,x_2)$ is defined in \eqref{def:t}. Then,
$\lambda(\varphi, \vx)$ is the center of the circle in our data set
with outward unit normal $\xi(\varphi)$ at $\vx$. In figure
\ref{fig2.1}, we give example plots of $\lambda([0,\pi], \vx)$ for
various $\vx$ when $\alpha = 1$. In the proof of Theorem
\ref{ell_thm_1}, we show that the image on any open neighborhood of
$[0,\pi]$ is sufficient to cover all wavefronts at $\vx$. 
 
We define a backprojection operator \bel{def:R*} R^*g(\vx) =
\int_{-\pi/2}^{3\pi/2}
g\paren{\lambda(\varphi,\vx)}\mathrm{d}\varphi\ee In general, this is
not well-defined, because $\vy(\vp,\vx)\to \infty$ as $\vp\to
-\pi/2,3\pi/2$. However, we will show in Theorem \ref{ell_thm_1},
$R^*:\Ec'(\Omega_1)\to \Dc'(\Omega_0)$.

Now, we define the \emph{smoothed normal
operator}
\begin{equation}\label{def:N}
\mathcal{N}f = R^* R^\dagger f.
\end{equation}


In our next theorem, we will show that $R^*$ and $R^\dagger$ can be composed
and we will provide other important properties of $\Nc$.

%
%


\begin{theorem}
\label{ell_thm_1} Let $n=2$. Then, $R^\dagger:\Ec'(\Omega_0)\to
\Ec'(\Omega_1)$, and $R^*:\Ec'(\Omega_1)\to \Dc'(\Omega_0)$. 

 The smoothed normal operator,
$\mathcal{N}:\Ec'(\Omega_0)\to\Dc'(\Omega_0)$, is an elliptic
pseudodifferential operator (\psido) of order $-1$, and
$\Nc:\Ec'(\Omega_0)\to \Dc'(\Omega_0)$

Furthermore, if $B$ is a compact subset of $\Omega_0$ then there is a
compact $B'\subset\Omega_1$ such that if $\supp(f)\subset B$ then
$\supp(R^\dagger f)\subset B'$.
\end{theorem}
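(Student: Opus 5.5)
The plan is to reduce everything to a single geometric fact: centers of circles meeting a compact subset of $\Omega_0$ that survive the cutoff $h$ lie in a compact set. Once that holds, $R^\dagger$ and $R^*$ are well defined on the relevant spaces, and the microlocal results of section \ref{bolker} finish the proof.

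\textbf{Step 1: support of $R^\dagger f$.} Let $B\subset\Omega_0$ be compact, so $B\subset[-M,M]\times[a',b']$ with $a<a'\le b'<b$. Suppose $S(\vy)$ meets $B$ at a point $\vx$ and $y_2<b+\eps/2$, which is forced by $\supp h$. Every circle $S(\vy)$ passes through $(y_1\pm\alpha,0)$, so its lowest point is $y_2-r(\vy)<0$ and its highest point is $y_2+\sqrt{y_2^2+\alpha^2}$. For $S(\vy)$ to reach height $x_2\ge a'>0$ we need $y_2+\sqrt{y_2^2+\alpha^2}\ge a'$, which gives the lower bound $y_2\ge (a'^2-\alpha^2)/(2a')$. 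So $y_2$ ranges over a bounded interval, hence $r(\vy)$ is bounded, and then $|y_1-x_1|\le r(\vy)$ bounds $y_1$. This produces a compact $B'\subset\Omega_1$ with $\supp(R^\dagger f)\subset B'$. It proves $R^\dagger:\Ec'(\Omega_0)\to\Ec'(\Omega_1)$, first for smooth $f$ and then by duality and continuity for distributions, and it also gives the last assertion of the theorem.

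\textbf{Step 2: $R^*$ on $\Ec'(\Omega_1)$.} In \eqref{def:R*}, if $g$ is supported in a compact $K\subset\Omega_1$ and $\vx$ ranges over a compact subset of $\Omega_0$, then only $\vp$ with $\lambda(\vp,\vx)\in K$ contribute. Since $|\lambda(\vp,\vx)|\to\infty$ as $\vp\to -\pi/2,3\pi/2$, uniformly for $x_2\in[a',b']$, which follows from \eqref{def:t}, these $\vp$ lie in a compact subinterval of $(-\pi/2,3\pi/2)$. On that subinterval $t(\vp,x_2)$, and hence $\lambda$, is smooth. So $R^*$ is a smooth-weighted generalized Radon transform, the formal adjoint of $R$ up to a smooth positive Jacobian, and it maps $\Ec'(\Omega_1)\to\Dc'(\Omega_0)$. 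Its canonical relation is $\Cc^t$, restricted to $\Omega_1\times\Omega_0$.

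\textbf{Step 3: $\Nc$ is an elliptic \psido of order $-1$.} By Theorem \ref{thm_cst_1}, Bolker holds for $\Cc_{\Omega}$, and hence for $\Cc_{\Omega_0}$. Corollary \ref{cor:bolker} then gives $\Cc_{\Omega_0}^t\circ\Cc_{\Omega_0}\subset\Delta$, so the composition is transversal and $\Nc$ is a \psido. Its order is the sum of the orders, $-\tfrac12-\tfrac12=-1$ by Proposition \ref{fio_prop}.

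For ellipticity, fix $(\vx,\xi)\in\dot T^*(\Omega_0)$. Theorem \ref{wave_cst_1} supplies a preimage $(\vy,\eta;\vx,\xi)$, and it remains to check that this preimage has $h(y_2)=1$, i.e. $y_2\le b+\eps/4$. This is the main obstacle, because $h$ could otherwise kill the only preimage. The argument: among the one or two centers on the normal line through $\vx$, the outer-normal one is $\vy=\vx-t\xi$. A circle whose center satisfies $y_2>x_2$ must, to pass through $\vx$ with a normal that can be either inner or outer, use one of the two centers, and the other center lies on the opposite side of $\vx$. I would show that at least one center on each normal line has $y_2\le x_2<b$. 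The two candidate centers sit on opposite sides of $\vx$ along the line (Theorem \ref{thm:PiR-atMost2}), so one has $y_2\le x_2$ unless the line is horizontal. In the horizontal case $y_2=x_2<b$ anyway. Thus a preimage with $h=1$ always exists.

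The principal symbol of $\Nc$ at $(\vx,\xi)$ is then a sum of positive terms, $h(y_2)$ times the positive weights $2r(\vy)$ and the Jacobian, over the preimages, and at least one term is nonzero. So $\Nc$ is elliptic. The same argument also verifies the mapping property $\Nc:\Ec'(\Omega_0)\to\Dc'(\Omega_0)$.
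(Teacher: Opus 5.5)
Your proposal follows essentially the same route as the paper. It uses a geometric bound on the centers of circles meeting a compact $B\subset\Omega_0$ for the support and mapping claims. Your lower bound $y_2\ge (a'^2-\alpha^2)/(2a')$, obtained from the top point of $S(\vy)$, is a cleaner route to the paper's constant $m$, whose correct value is $(a^2-\alpha^2)/(2a)$ rather than the stated $-\alpha/a$. You then get a \psido of order $-1$ from the Bolker condition of Theorem \ref{thm_cst_1}, and a preimage with $h=1$ from the fact that two centers on the normal line lie on opposite sides of $\vx$.

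One case in your ellipticity argument is missing: $\xi=(0,\pm1)$. There Theorem \ref{wave_cst_1} gives only a single preimage, so the opposite-sides argument says nothing. You need to add that this center is $(x_1,(x_2^2-\alpha^2)/(2x_2))$, which lies below $\vx$; equivalently, no circle in the data set can have $\vx$ as its lowest point, since it passes through $(y_1\pm\alpha,0)$. This is exactly what the paper's Cases 2 and 3 handle.
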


\begin{proof}  
 The FIO $R^\dagger$ has the same phase as $R$ since multiplication by $h$
(a smooth function of $\vy$) affects only the amplitude of $R$.
Therefore, $R^\dagger$ and $R$ have the same canonical relation, $\Cc$. This
also implies, $R^*$ is an FIO (note that $R^*g (\vx)$ is an integral
of $g$ over all circles in the data set containing $\vx$, so it fits
the framework of the dual operator in \cite{GS1977, quinto}). 

We now prove the mapping properties of our operators. First, we write
$\lambda(\vp,\vx) = (y_1,y_2)$ where $\lambda$ is defined in
\eqref{def:lambda}. Note that $\lambda$ is a smooth function on
$(-\pi/2,3\pi/2)\times \Omega$ and $y_2 = y_2(\vp,x_2)$ is independent
of $x_1$. A straightforward calculation using the last expression in
\eqref{def:t} as well as \eqref{def:lambda} shows that for each
$x_2>0$, $y_2(\cdot,x_2)$ has global minimum for $\vp\in
(-\pi/2,3\pi/2)$ at $\vp = \pi/2$. Then, using the expression
${t\paren{\pi/2,x_2} = \frac{\alpha^2+x_2^2}{2x_2},}$
we see $y_2(\vp,x_2)$ has a global minimum on
$(-\pi/2,3\pi/2)\times [a,b]$. A simple geometric argument shows it is
\[\lambda(\pi/2,a) =-\frac{\alpha}{a}=:m.\]

First, we explain why $R^\dagger = hR:\Ec'(\Omega_0)\to\Ec'(\Omega_1)$.
By the choice of $m$ and $h$, the only circles in the data set with
center in $\Omega_1$ that meet $\Omega_0$ have $y_2\in[m,b+\eps/2]$,
so \bel{def:Sc}\supp(R^\dagger f)\subset \Sc=\{(y_1,y_2)\in \rtwo\st m\leq
y_2\leq b+\eps/2\}.\ee

We claim there is an $L>0$ such that for $\vy\in\Sc$, every circle in
the data set centered at $\vy$ has radius bounded above by $L$. The
reason is that for $\vy\in \Sc$, $y_2\in [m,b+\eps/2]$ and for any
$\vy$, $t^2 = y_2^2+\alpha^2$, and $t$ is the radius of the circle in
the data set centered at $\vy$.

Let $f\in \Ec'(\Omega_0)$. Now, since $f $ has compact support in
$\Omega_0$, then $R^\dagger f$ must have compact support as every circle in
the data set for $\vy\in \Sc$ that meets $\vx$ has radius no larger
than $L$. Therefore, $R^\dagger$ is an FIO, and $R^\dagger:\Ec'(\Omega_0)\to
\Ec'(\Omega_1)$.

Let $B$ be a compact subset of $\Omega_0$, then the argument above
provides a subset, $B'$ of $\Omega_1$ such that if $\supp(f)\subset B$
then $\supp(R^\dagger f)\subset B'$. Finally, $B'$ is compact because $h$ is
supported in $(-\infty, b+\eps/2]$. 

To finish characterizing mapping properties, we show
$\Nc:\Ec'(\Omega_0)\to \Ec'(\rtwo)$. Let $f\in \Ec'(\Omega_0)$. We
have shown that $\supp(R^\dagger f)\subset \Sc$. A straightforward geometric
exercise using the definition of the circles in the data set (see
Figure \ref{fig1}) shows that $\supp(\Nc f)\subset \Sc'$ where
\bel{def:Sc'}\Sc' = \sparen{(x_1,x_2)\in \rtwo\st 2m-a\leq x_2\leq
b+\eps+\sqrt{(b+\eps)^2+\alpha^2}},\ee and an argument similar to the
one showing $R^\dagger f$ has compact support can be used here to show that
$\Nc f$ has compact support.

Now, we show that $\Nc:\Ec'(\Omega_0)\to \Dc'(\Omega_0)$ is an
elliptic \psido. First, $\Nc:\Ec'(\Omega_0)\to \Dc'(\Omega_0)$ is a
\psido since the canonical relation of $\mathcal{N}$ is $\Cc^t\circ
\Cc$ which is a subset of the diagonal by Theorem \ref{thm_cst_1}. The
order of $\mathcal{N}$ is $-1$ since the order of $R^\dagger$ and $R^*$ is
$-1/2$. 

 This proof of ellipticity of $\Nc$ follows the same arguments as the
symbol calculations in \cite[pp.\ 337-338]{quinto} and \cite[Section
5.2]{GKQR:ma}. 

 Let $(\vx,\xi)\in T^*(\Omega_0)\smo$. We calculate the symbol of
$\Nc$ at $(\vx,\xi)$. Since wavefront sets are homogeneous sets, we
will assume $\xi$ is a unit vector. 

We consider three cases.

\textit{Case 1:} Assume $\xi = \xi(\vp)$ for some $\vp\in
(-\pi/2,\pi/2)\cup(\pi/2,3\pi/2)$, i.e., $\xi\neq (0,\pm 1)$.

Then, there are two circles in the data set that are normal to
$(\vx,\xi)$ and their centers are: \bel{def:ys}\vy_1 =
\gamma(\vp,\vx),\ \ \vy_2 = \gamma(\vp-\pi,\vx)\ee where, if
$\vp-\pi<-\pi/2$, we take angle $\vp+\pi$ for $\vy_2$. There are two
points in $C$ above $(\vx,\xi)$: \bel{preimages} \tau_1 =
(\vy_1,r(\vy_1), \eta_1,\vx,\xi),\ \ \tau_2 = (\vy_2,r(\vy_2),
\eta_2,\vx,\xi)\ee where $\eta_j$ is calculated using \eqref{def:C}.

To calculate the symbol of $R^*R^\dagger$ at $(\vx,\xi)$, we pull back
$(\vx,\xi)$ to $C$ using $\Pi_R$, getting the two preimages $\tau_1$
and $\tau_2$. Since the measure defining $R^*$ is positive, the symbol
of $R^*$ at each of these points is positive.

To get the symbol of $R^*R^\dagger$ at $(\vx,\xi)$ we multiply the symbols
of $R^*$ at $\tau_1$ and $\tau_2$ by the symbols of $R^\dagger$ at these
points. Since $R$ is elliptic with positive measure and $h\geq 0$, the
symbol of $R^\dagger$ is nonnegative at both points. Since $x_2<b$ and
$h(y_2)=1$ for $y_2\leq b$, at least one of $h(\vy_1)$ and $h(\vy_2)$
is equal to one so the sum of the symbols of $R^\dagger$ at $\tau_1$ and
$\tau_2$ is positive. Since the sum is positive $\Nc$ is elliptic
above $(\vx,\xi)$.

\textit{Case 2:} Assume $\xi = (0,1)=\xi(\pi/2)$ there is only one
preimage of $\xi$, $\tau_1$, and a similar proof is done but without
the addition of $\tau_2$.

\textit{Case 3:} If $\vp_0$ equals $-\pi/2$ or $3\pi/2$ then we do the
calculation from \textit{Case 2}, writing $(\vx,\xi) =
(\vx,-\xi(\pi/2))$, and that just multiplies the $\eta$ component by
$(-1)$. Therefore, $\Nc$ is an elliptic \psido. This finishes the
proof.
\end{proof}

 \begin{remark}\label{rem:strongish norm} Note that Theorem
\ref{ell_thm_1} does not follow directly from of Theorem
\ref{thm:elliptic psido} since the $R^*$ is not the dual of $R^\dagger$, but
\eqref{strong norm} can be used in parts of the proof since the
circles with centers in $\Omega_1$ that meet $\Omega_0$ have bounded
$y_2$ coordinate so the Contraction Mapping Theorem can be used for
them.\end{remark}

\subsubsection{Injectivity} The injectivity of $R$ in the special case
when $n=2$ and $r(\vy) = \sqrt{y_2^2+\alpha^2}$ is proven in
\cite[Theorem 4.5]{web} using Volterra integral equation theory. See
the example in \cite[figure 3(a)]{web}. Specifically, we have the
theorem:

\begin{theorem}
\label{inj_lin} Let $f\in L^2_c(\Omega_0)$, and $r(\vy) =
\sqrt{y_2^2+\alpha^2}$. If $Rf(\vy) = 0$ for $\vy\in \{\frac{a^2 -
\alpha^2}{2a} \leq y_2 \leq \frac{b^2 - \alpha^2}{2b}\}$, then $f=0$.
\end{theorem}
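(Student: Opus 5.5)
The plan is to exploit the translation invariance of the data in $y_1$: take the Fourier transform in the first variable, which reduces $Rf=0$ to a family of one-dimensional Volterra equations of the first kind in $x_2$ (one for each frequency $k$), each with an Abel-type weakly singular kernel. Uniqueness for each $k$ will then come from converting to a Volterra equation of the second kind.

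\emph{Step 1 (geometry of the circles).} The circle $S(\vy)$ with $r(\vy)=\sqrt{y_2^2+\alpha^2}$ is
\[
(x_1-y_1)^2 = \alpha^2+2x_2y_2-x_2^2 .
\]
Let $s=y_2+\sqrt{y_2^2+\alpha^2}>0$ be the top height of the circle and $s'=y_2-r(\vy)=-\alpha^2/s$ its lowest height. Then $\alpha^2+2x_2y_2-x_2^2=(s-x_2)(x_2+\alpha^2/s)$. The map $y_2\mapsto s$ is an increasing bijection $\rr\to(0,\infty)$ with inverse $y_2=(s^2-\alpha^2)/(2s)$. Hence the hypothesis range $\frac{a^2-\alpha^2}{2a}\le y_2\le\frac{b^2-\alpha^2}{2b}$ is exactly $s\in[a,b]$.

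\emph{Step 2 (reduction to a Volterra equation).} Write each upper arc as two graphs $x_1=y_1\pm\sqrt{(s-x_2)(x_2+\alpha^2/s)}$, and use arc length $\dd S = r\,\dd x_2/|x_1-y_1|$. Since $f$ is supported in $a<x_2<b$, only heights $x_2\in(a,s]$ contribute. Taking the Fourier transform in $y_1$ (and in $x_1$ for $f$; this is legitimate since $f\in L^2_c$, so the identity holds for a.e.\ $k$) gives
\[
\widehat{Rf}(k,y_2(s)) = \int_a^s \hat f(k,x_2)\,\frac{2r(s)\cos\!\big(k\sqrt{(s-x_2)(x_2+\alpha^2/s)}\big)}{\sqrt{(s-x_2)(x_2+\alpha^2/s)}}\,\dd x_2 ,
\]
where $r(s)=(s+\alpha^2/s)/2$. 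Thus $Rf=0$ on the stated range yields, for a.e.\ $k$, a first-kind Volterra equation on $[a,b]$. Its kernel has the form $K_k(s,x_2)/\sqrt{s-x_2}$, with $K_k$ smooth on $\{a\le x_2\le s\le b\}$ and diagonal value
\[
K_k(s,s)=\frac{2r(s)}{\sqrt{s+\alpha^2/s}}=\sqrt{2r(s)}>0 .
\]

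\emph{Step 3 (uniqueness).} Apply the Abel operator: multiply the equation by $(t-s)^{-1/2}$ and integrate over $s\in[a,t]$. The composite kernel is then bounded and smooth, and its diagonal is $\pi K_k(t,t)\ne0$. Differentiating in $t$ gives a second-kind Volterra equation
\[
\pi K_k(t,t)\,\hat f(k,t)+\int_a^t H_k(t,x_2)\hat f(k,x_2)\,\dd x_2=0
\]
with $H_k$ bounded. The standard Neumann series / Gronwall argument in $L^2(a,b)$ then gives $\hat f(k,\cdot)=0$ for a.e.\ $k$, hence $f=0$ by Plancherel. This is the argument of \cite[Theorem 4.5]{web}, which we may cite for the details.

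\emph{Main obstacle.} The hard step is Step 3, specifically the regularity bookkeeping. One must check that $K_k$ is smooth up to the diagonal; here the factor $\cos(k\sqrt{\cdot})$ is an even function of the square root, hence smooth. One must also check that the differentiation after the Abel transform is justified when $\hat f(k,\cdot)$ is only $L^2$. I would handle the latter by integrating rather than differentiating, i.e.\ working with the integrated second-kind form, or by first mollifying in $x_1$, which commutes with $R$ by translation invariance.
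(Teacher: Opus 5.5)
Your proposal is correct and follows essentially the same route as the paper, which gives no independent argument and instead defers to \cite[Theorem 4.5]{web}. That argument is the same as yours: a Fourier transform in the translation variable $y_1$, then (via the top-height parametrization $s\in[a,b]$) a first-kind Volterra equation in $x_2$ with Abel-type kernel $K_k(s,x_2)(s-x_2)^{-1/2}$ whose diagonal value $\sqrt{2r(s)}$ never vanishes, and uniqueness by passing to a second-kind equation. The regularity step you flagged is harmless: for $u\in L^1(a,b)$ the identity $\int_a^t L(t,x)u(x)\,\dd x=\int_a^t (L(\tau,\tau)u(\tau)+\int_a^\tau \partial_\tau L(\tau,x)u(x)\,\dd x)\,\dd\tau$ gives the second-kind equation almost everywhere without differentiating $\hat f$.
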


%
%

%

%


We use these theorems to show that reconstruction from $R$ is stable
in the following sense.

\begin{corollary}
\label{corr_stab_1} Let $B$ be a compact 
subset of $\Omega_0$, and let $s\in \rr$. Let $f\in H^s(B)$. Further,
let $B_1$ and $B_2$ be compact sets that satisfy $B\subset
\intt(B_1),\ B_1\subset \intt(B_2)$, and $B_2\subset \Omega_0$. Let
$\phi_1$ and $\phi_2$ be in $\Dc(\Omega_0)$ with $\phi_1 = 1$ on $B$,
$\supp(\phi_1)\subset B_1$, and $\phi_2=1$ on $B_1$, and
$\supp(\phi_2)\subset B_2$. Then, $\mathcal{N}' = \phi_2 \mathcal{N}
\phi_1$ has a stable inverse in $H^s(B)$ of order $1$. That is, there
exists $C >0$, with
\begin{equation}
\left\|f\right\|_{H^s(B)} \leq
C\left\|\mathcal{N}'f\right\|_{H^{s+1}(B_2)}.
\end{equation}
\end{corollary}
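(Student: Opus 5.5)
The plan is to combine the ellipticity of $\Nc$ from Theorem \ref{ell_thm_1} with the injectivity result Theorem \ref{inj_lin}, and then remove the compact error term using Lemma \ref{stef}.

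\textbf{Step 1: elliptic estimate with a smoothing error.} Since $\Nc$ is an elliptic \psido\ of order $-1$ on $\Omega_0$, the operator $\Nc'=\phi_2\Nc\phi_1$ is a properly supported \psido\ of order $-1$ that is elliptic above $B_1$. Take a parametrix $P$ of order $1$, built microlocally elliptic above $\supp\phi_1$ and cut off by $\phi_2$. It satisfies
\[
P\Nc'\phi_1 = \phi_1 + K_0,
\]
where $K_0$ is smoothing with compactly supported kernel in $B_2\times B_2$. For $f\in H^s(B)$ we have $\phi_1 f=f$. Because $P:H^{s+1}(B_2)\to H^s$ is continuous, this gives
\[
\|f\|_{H^s(B)}\le C\big(\|\Nc' f\|_{H^{s+1}(B_2)}+\|K_0 f\|_{H^{s}}\big).
\]
Here $K_0:H^s(B)\to H^{s}(B_2)$ is compact. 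It maps into $H^{s+1}(B_2)$ with compactly supported image, and the embedding $H^{s+1}(B_2)\hookrightarrow H^s(B_2)$ is compact by Rellich.

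\textbf{Step 2: injectivity of $\Nc'$ on $H^s(B)$.} Since $\phi_2=1$ on $B_1\supset B$ and $\phi_1 f=f$, the equation $\Nc' f=0$ gives $\Nc f=0$ on $B_1$. The next aim is to get $\Nc f=0$ everywhere, or at least $R^\dagger f=0$. First, $\Nc' f=0$ together with Step 1 gives $f=-K_0 f\in C^\infty$, so $f$ is smooth and lies in $L^2_c(\Omega_0)$. Then I would pair with $f$:
\[
0=\langle \Nc f, f\rangle=\langle R^\dagger f, R f\rangle=\int h(y_2)\,|Rf(\vy)|^2\,w(\vy)\,\dd\vy,
\]
with a positive weight $w$ coming from the change of variables $\vp\mapsto\vy$ in \eqref{def:R*}. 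This pairing uses only the values of $\Nc f$ on $\supp f\subset B$, where $\phi_2=1$, so it is legitimate. Since $h=1$ for $y_2\le b+\eps/4$, it follows that $Rf=0$ on $\{y_2\le b+\eps/4\}$. This set contains the strip $\{\frac{a^2-\alpha^2}{2a}\le y_2\le\frac{b^2-\alpha^2}{2b}\}$, because $\frac{b^2-\alpha^2}{2b}<b$. Theorem \ref{inj_lin} then gives $f=0$.

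\textbf{Step 3: closing the argument.} Apply Lemma \ref{stef} with $X=H^s(B)$, $Y=H^{s+1}(B_2)$, $Z=H^s$, $A=\Nc'$ (bounded, hence closed) and $K=K_0$. Injectivity from Step 2 removes the compact term, giving $\|f\|_{H^s(B)}\le C\|\Nc' f\|_{H^{s+1}(B_2)}$.

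\textbf{Main obstacle.} The hard step is Step 2, namely justifying that $R^*$ is, up to a positive weight, the formal adjoint of $R$. Definition \eqref{def:R*} uses the measure $\dd\vp$, while $R$ uses arc length, so one must check that the Jacobian of $(\vp,\vx)\leftrightarrow(\vy,\vx)$ on the incidence set is positive. If this fails to be a clean $L^2$ adjoint pairing, I would instead prove injectivity of $\Nc'$ by a microlocal and support argument, which is less clean.
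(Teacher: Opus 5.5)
Your outline is the same as the paper's: a parametrix of $\Nc$ whose smoothing remainder is compact on $H^s(B)$, injectivity of $\Nc'$ from $\langle \Nc f,f\rangle$ together with Theorem \ref{inj_lin}, and then Lemma \ref{stef}. You first get smoothness of $f$ from $f=-K_0f$ and only then pair in $L^2$, which is cleaner than the paper's $H^s$ pairing. The gap is the identity $\langle\Nc f,f\rangle=\int h\,|Rf|^2w\,\dd\vy$, and positivity of the Jacobian is the wrong test for it.

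Parametrize the incidence set by $(\vy,\theta)$ with $\vx=\vy+r(\vy)\xi(\theta)$. On this set $\vp=\theta$, and $\dd\vx\,\dd\vp=\paren{1+\xi(\theta)\cdot\nabla r(\vy)}\dd\vy\,\dd\theta$. So, with \eqref{def:R*} as written,
\[
\langle R^*g,f\rangle=\int g(\vy)\,\tilde Rf(\vy)\,\dd\vy,\qquad \tilde Rf(\vy)=\int f\paren{\vy+r(\vy)\xi(\theta)}\paren{1+\frac{y_2\sin\theta}{r(\vy)}}\dd\theta ,
\]
whereas $Rf(\vy)=r(\vy)\int f\paren{\vy+r(\vy)\xi(\theta)}\dd\theta$. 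The Jacobian is positive because $|\nabla r|<1$, but it depends on $\theta$, i.e.\ on the point of the circle. Hence $\tilde Rf$ is not of the form $w(\vy)Rf$, and $\langle\Nc f,f\rangle=\int h\,Rf\,\tilde Rf\,\dd\vy$ has no sign. In particular, $\langle\Nc f,f\rangle=0$ does not force $Rf=0$. The paper's proof makes the same identification when it writes $\langle f,\Nc f\rangle=\|\sqrt h\,Rf\|^2$, so this step needs repair in both.

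The simplest repair is to let $R^*$ be the genuine formal adjoint of $R$. Concretely, insert the weight $r(\vy)/\paren{1+\xi(\vp)\cdot\nabla r(\vy)}$, with $\vy=\lambda(\vp,\vx)$, into the integrand of \eqref{def:R*}.
\begin{itemize}
\item This weight is smooth and positive, and it is bounded above and below on $\supp h$, where $y_2$ is bounded.
\item The symbol computation in Theorem \ref{ell_thm_1} and the mapping properties are therefore unchanged.
\item Your Step 2 then goes through verbatim with $w\equiv1$.
\end{itemize}
Your fallback of a ``microlocal and support argument'' would not work on its own. Ellipticity only shows that $\ker\Nc'\cap H^s(B)$ is finite dimensional and consists of smooth functions. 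Ruling out a nontrivial kernel needs a global argument, such as the positivity identity above.
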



\begin{proof}
Let $\Pc$ be an order $-1$ \psido that is a parametrix of $\Nc$ on
$\Omega_0$. Note, $\Pc$ exists by Theorem \ref{ell_thm_1}. Without
loss of generality, one may assume $\Pc$ is properly supported. Then,
for $f\in \Ec'(\Omega_0)$ one can write \bel{parametrix1}\Pc\Nc f = I
f+ K f\ee for some smoothing operator $K$.

Let $f\in \Ec'(\Omega_0)$. As $f$ supported in $B$, we have $\phi_1 f
= f$ and $\phi_2\Pc\Nc \phi_1 f$ is equal to $f$ modulo smoothing
operators. By pseudolocality of \psido, one can write
\eqref{parametrix1} as
\begin{equation}
\label{parametrix2}
\begin{split}
\Pc'\Nc' f &=
f + K'f,\ \ \text{where}\\
\Pc' = \paren{\phi_2\Pc }, \ \
\Nc'&= \paren{\phi_2\Nc\phi_1},\ \ K' =
\phi_2K\phi_1+\phi_2\Pc(\phi_2^2-1)\Nc\phi_1,
\end{split}
\end{equation}
and  $K'$ is smoothing for distributions supported on $B$ since
$(\phi_2^2-1)$ is zero on $B$. 

Since $\Pc'$ and $\Nc'$ are properly supported \psido, for each
$s\in \rr$, \newline $\Pc':H^s(B)\to H^{s-1}(B_2)$
and $\Nc':H^s(B)\to H^{s+1}(B_2)$, and
$K':H^s(B)\to H^{s}(B_2)$ are continuous and $K'$ is compact since it
is smoothing for $f\in H^s(B)$.


By the triangle inequality, for some $C>0$, we have
\begin{equation}
\begin{split}
\|f\|_{{H^s(B)}}  &\leq
\|\Pc'\mathcal{N}'f\|_{H^s(B_2)} + \|K'f\|_{H^s(B_2)} \\
&\leq C\paren{\|\mathcal{N}'f\|_{H^{s+1}(B_2)} + \|K'f\|_{H^s(B_2)}}. \\
\end{split}
\end{equation}
We now use Lemma \ref{stef} to prove the result. Since $\mathcal{N}'$
is continuous, it is a closed operator. As noted, $K'$ is smoothing for
distributions supported on $B$ and is thus compact from $H^s(B)$ to
$H^s(B_2)$. It remains to prove that $\mathcal{N}'$ is injective from
$H^s(B)$ to $H^s(B_2)$. Let $f\in H^s(B)$ and $\mathcal{N}'f = 0$.
Then, $\mathcal{N}f = 0$ on $B_1$ given the support of $\phi_2$. Since
$\supp(f)\subset B_1$,
\begin{equation}
0 = \langle f, \mathcal{N}f\rangle_{H^s(\mathbb{R}^2)} =
\left\|\sqrt{h} Rf\right\|^2_{H^s(\mathbb{R}^2)}.
\end{equation}
 This implies $Rf = 0$ on $\{-\infty<y_2<b+\eps/4\}$. 
 
{Let $f\in
H^s(B)$. Since $\supp(f)\subset B$, $\Nc$ is elliptic, and
$\Nc f = 0$ on $B_1$, $f$ is smooth. It follows that $f=0$ by Theorem
\ref{inj_lin}, since $b > \frac{b^2 - \alpha^2}{2b}$ and $f\in L^2_c(\Omega_0)$. Thus,
$\mathcal{N}'$ is injective on domain $H^s(B)$ and this allows us to
use Lemma \ref{stef}, and this completes the proof.}\end{proof}



%% file: example4.2-v2.tex
\subsection{A novel rotational CST geometry}\label{sec:circularCST}

In this section, we apply our microlocal theory
to a new 2-D scanning geometry in CST. Consider the circular CST
geometry in figure \ref{fig2*}. The source and detector ($\vs$ and
$\vd$) lie at opposite ends of a line segment, length $2\alpha$ for
some $\alpha>0$, which is tangent to $S^1$. The line segment is
rotated (staying tangent to $S^1$) about the origin to generate data.
In this case, the CST data determines the integrals of $f$ over
circles which intersect both $\vs$ and $\vd$ and $f$ is assumed to be
supported on the interior of $S^1$, i.e., the open unit ball in
$\mathbb{R}^2$. The proposed geometry is somewhat analogous to that of
\cite{ABKQ2013}, although in that paper the integral curves are
ellipses and the application is ultrasound tomography.
\begin{figure}[!h]
\centering
\begin{tikzpicture}[scale=0.8]
\draw[fill=green,rounded corners=1mm] (1,1) \irregularcircle{0.5cm}{0.5mm};
\node at (0.5,1.5) {$f$};
\draw [->,line width=1pt] (-6,0)--(6,0)node[right] {$x$};
\draw [->,line width=1pt] (0,-6)--(0,6)node[right] {$y$};
\draw [thick] (0,0) circle (4);
\draw [<->] (0,0)--(-2.83,2.83);
\node at (-1.415,1.8) {$1$};
\node at (-4.1,-0.2-4) {$\vs$};
\node at (4.1,-0.2-4) {$\vd$};
\draw [blue] (0,-2) circle (4.4721);
\draw [red,fill=red] (0,-2) circle (0.1);
\draw [<->] (-4,-4)--(0,-4);
\node  at (-2,-4.2) {$\alpha$};
\draw [<->] (4,-4)--(0,-4);
\node  at (2,-4.2) {$\alpha$};
\node at (-0.3,-2) {$\vy$};
\draw [<->] (-4,-4)--(0,-2);
\node at (-2.25,-2.7) {$r(\vy)$};
\node at (0.25,-4.25) {$\vz$};
\end{tikzpicture}
\caption{Circular CST scanner design. For $\vy\neq \zero$, $\vz =
\frac{\vy}{\abs{\vy}}$.}
\label{fig2*}
\end{figure}

In this example, we have
\begin{equation}
\label{r_circ}
r(\vy)  = \sqrt{\alpha^2 + (1-|\vy|)^2}.
\end{equation}
Then, $Rf(\vy)$ models the intensity of Compton scattered photons in
the geometry of figure \ref{fig2*}.

These circles can be defined in terms of $\vy\neq 0$. Let $\vz =
\vy/\abs{\vy}$ and let $\ell$ be the line tangent to $S^1$ at $\vz$.
This determines $\vs$ and $\vd$, the two points on $\ell$ of distance
$\alpha$ from $\vz$ depicted in figure \ref{fig2*}, and this
determines the circle $S(\vy)$ centered at $\vy$ and containing $\vs$ and
$\vd$ in the data set
for this problem.

As the expression \eqref{r_circ} is valid for $n\geq 1$, we will work
in $n$-dimensions to keep the discussion more general, although the
case of interest in Compton tomography is $n=2$ since {such} Compton
transforms in $\rr^3$ would not be modeled by spherical integrals. {The integral surfaces in $\rr^3$ are tori \cite{rigaud20183d}.} To
state our theorems, we define the following sets that are analogous to
the sets in section \ref{sec:linearCST}. Define \bel{sets4.2} D =
\sparen{\vx\in \rn\st \abs{\vx}<1}, \ \ Y = \sparen{\vy\in \rn\st
\abs{\vy} >\frac{\alpha^2}{4}}, \ee then $D$ is the open unit ball,
and $Y$ is the set of all spheres $S(\vy)$ that intersect $D$.

We now have the theorem:

\begin{theorem}
\label{thm_cst_2}
Let $r(\vy)  = \sqrt{\alpha^2 + (1-|\vy|)^2}$. Then,
$R:\Ec'(D)\to \Dc'(Y)$ is an elliptic FIO which satisfies the Bolker
condition.
\end{theorem}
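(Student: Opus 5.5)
The plan is to follow the proof of Theorem \ref{thm_cst_1}. First I would check the gradient bound \eqref{norm1} and invoke Proposition \ref{fio_prop} and Theorem \ref{imm_thm}. Then I would locate the point $\vz=\vy-r(\vy)\nablay r(\vy)$ and show that every artifact point of $\vx\in D$ lies outside $D$, so that Proposition \ref{prop_geo} gives injectivity of $\Pi_L$.

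\emph{Step 1: smoothness and the norm inequality.} The function $r(\vy)=\sqrt{\alpha^2+(1-|\vy|)^2}$ is positive everywhere. It fails to be smooth only at $\vy=\zero$, which is not in $Y$, so $r$ is smooth on the open set $Y$. For $\vy\neq\zero$,
\[
\nablay r(\vy) = -\frac{(1-|\vy|)}{r(\vy)}\,\frac{\vy}{|\vy|},\qquad |\nablay r(\vy)| = \frac{\abs{1-|\vy|}}{\sqrt{\alpha^2+(1-|\vy|)^2}}<1 .
\]
The proofs of Proposition \ref{fio_prop} and Theorem \ref{imm_thm} are pointwise in $\vy$: the phase, the nondegeneracy of $\mathrm{d}\Phi$, the ellipticity of the amplitude $2r(\vy)$, and the rank computation of $D\Pi_L$. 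Hence they apply verbatim with $\rn$ replaced by the open set $Y$. This shows that $R:\Ec'(D)\to\Dc'(Y)$ is an elliptic FIO of order $-(n-1)/2$ and that $\Pi_L$ is an immersion. The one point needing care is that these earlier statements assume $r$ is smooth on all of $\rn$. I would remark that only local smoothness near each $\vy\in Y$ is used, or alternatively modify $r$ near $\zero$ without changing it on $Y$.

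\emph{Step 2: the point $\vz$.} A direct computation gives
\[
\vz=\vy-r(\vy)\nablay r(\vy)=\vy+(1-|\vy|)\frac{\vy}{|\vy|}=\frac{\vy}{|\vy|},
\]
which is the point on the unit sphere shown in figure \ref{fig2*}. Since $|\vz-\vy| = r(\vy)|\nablay r(\vy)|<r(\vy)$, the point $\vz$ lies strictly inside $S(\vy)$. So for $\vx\in S(\vy)\cap D$, the line $L_{\vy,\vx}$ through $\vx$ and $\vz$ is a chord of $S(\vy)$ with $\vz$ strictly between its endpoints. By Theorem \ref{thm:PiL-2-to-1}, the artifact point is therefore $\hvx=\vz+s(\vz-\vx)$ for some $s>0$.

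\emph{Step 3: the artifact points avoid $D$.} This is the key geometric step, and I would prove it by convexity of the closed unit ball. Suppose $\hvx\in\cl(D)$. Then
\[
\vz=\frac{1}{1+s}\,\hvx+\frac{s}{1+s}\,\vx
\]
is a convex combination with positive weight on the interior point $\vx\in D$, so $\vz$ would be in the open ball $D$. This contradicts $|\vz|=1$. Hence $\hvx\notin D$, so $\Ac(D)\cap D=\emptyset$. Proposition \ref{prop_geo} (equivalently Theorem \ref{inj_thm}) then gives that $\Pi_L$ is injective on $\Cc_D$, and with Step 1 the Bolker condition holds.

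The main obstacle is Step 1: making sure the global hypotheses of the earlier results are legitimately replaced by their local versions on $Y$, given that $r$ is not smooth at the origin. Steps 2 and 3 are straightforward once $\vz=\vy/|\vy|$ is computed.
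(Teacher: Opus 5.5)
Your proposal is correct and follows the paper's proof step by step. The steps are the gradient bound combined with Proposition \ref{fio_prop} and Theorem \ref{imm_thm}, the computation $\vz=\vy/\abs{\vy}\in S^{n-1}$, and Theorem \ref{inj_thm} (or Proposition \ref{prop_geo}) once the artifact points are shown to lie outside $D$. Your convexity argument for that last fact, and your remark that $r$ fails to be smooth only at $\zero\notin Y$, supply details the paper asserts without proof or leaves implicit.
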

\begin{proof}
We have,
\begin{equation}
\nabla_{\vy}r = -\frac{\vy}{|\vy|}\cdot \frac{(1-|\vy|)}{\sqrt{\alpha^2 + (1-|\vy|)^2}},
\end{equation}
and
\[|\nabla_{\vy}r| = \frac{|1-|\vy||}{\sqrt{\alpha^2 + (1-|\vy|)^2}}
<1.\]
Thus, $R$ is an elliptic FIO by proposition \ref{fio_prop}, and the
left projection of $R$, $\Pi_L$, is an immersion by Theorem
\ref{imm_thm}. Furthermore, if $\vy\in Y$, we have that the point $\vz$ in the
statement of Theorem \ref{thm:PiL-2-to-1}
$$\vz = \vy - r(\vy)\nabla r(\vy)= \frac{\vy}{|\vy|} \in S^{n-1}.$$
Therefore for any point $\vx\in S(\vy)\cap D$, the artifact point in
\eqref{def:hvx}, $\hvx(\vy,\vx)$, the other point on the line
containing $\vx$ and $\vz$, is outside $D$.
Therefore, restricted to $\Cc_D$, $\Pi_L$ is also injective by Theorem
\ref{inj_thm}, and the Bolker condition is satisfied.
\end{proof}



Let $b\in (0,1)$ and define 
\bel{def:DbYb} D_b = \sparen{\vy\st \norm{\vy}<b}, \ \ Y_b = \sparen{\vy\st
\frac{\alpha^2}{4}<\norm{\vy}< \frac{\alpha^2+1-b^2}{2(1-b)}},\ee and
$Y_b$ is the set of sphere centers in the data set that meet $D_b$,
that is the set $Y_{D_b}$ given  in Definition \ref{def:YO}  and used
in Theorem \ref{thm:elliptic psido}

 We now show that $R^*R$ is an elliptic \psido.

\begin{theorem}\label{PDO_cst_2}
Let $r(\vy) = \sqrt{\alpha^2 + (1-|\vy|)^2}$. Then the normal
operator $R^*R:\Ec'(D)\to \Dc'(D)$ is an elliptic pseudodifferential
operator of order $-1$.
\end{theorem}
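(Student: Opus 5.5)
The plan is to reduce to Theorem \ref{thm:elliptic psido}, applied on the balls $\Omega = D_b$ for $b\in(0,1)$. Since any $f\in\Ec'(D)$ has support in some $D_b$, and ellipticity and the \psido property are local, this suffices once we check the hypotheses on each $D_b$. The norm inequality \eqref{norm1} and the Bolker condition on $\Cc_D$ (hence on $\Cc_{D_b}$) are given by Theorem \ref{thm_cst_2}. Concretely, the artifact point $\hvx(\vy,\vx)$ lies on the line through $\vx$ and $\vz=\vy/\abs{\vy}\in S^{n-1}$, so it falls outside the closed unit ball. Therefore at most one of $\vx,\hvx$ lies in $D_b$, and the hypotheses of Theorem \ref{inj_thm} hold.

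The remaining hypothesis is the strong norm inequality \eqref{strong norm} on $Y_{D_b}=Y_b$. We have
\[
\abs{\nabla r(\vy)} = \frac{\abs{1-\abs{\vy}}}{\sqrt{\alpha^2+(1-\abs{\vy})^2}}
\]
from the proof of Theorem \ref{thm_cst_2}. This quantity tends to $1$ only as $\abs{\vy}\to\infty$. On $Y_b$, however, $\abs{\vy}$ is bounded above by $\frac{\alpha^2+1-b^2}{2(1-b)}$, and $\abs{1-\abs{\vy}}$ is bounded by some $M_b$. Hence $\abs{\nabla r}\le M_b/\sqrt{\alpha^2+M_b^2}=:C_b<1$ on $Y_b$. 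First, though, we verify the description of $Y_b$ as the set of centers of spheres meeting $D_b$. By rotational symmetry this is a one-dimensional computation along the ray through $\vx$: the farthest center occurs for $\vx = -b\vz$, where solving $\abs{\vy}+b = r(\vy)$ gives $\abs{\vy} = \frac{\alpha^2+1-b^2}{2(1-b)}$.

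The step I expect to be the main obstacle is a subtlety in applying Theorem \ref{thm:PiR-bound}. That theorem's contraction argument uses the bound on $\nabla r$ along the whole ray $\vx+t\omega$, not only on $Y_b$. The strong norm inequality fails at large $\abs{\vy}$, and $r$ is also not smooth at $\vy=\zero$ (which lies outside $Y$ when $\alpha>0$, but may lie on a ray). To handle this, I would argue directly as in Remark \ref{rem:strongish norm}, via the intermediate value theorem on $g(t)=t-\rfr(t)$ for $t\ge0$. At $t=0$, $g(0) = -r(\vx)<0$. For large $t$, $\rfr(t)\approx t-1$, and $\rfr(t)-t\to -1 + O(1/t)$ is eventually negative, so $g(t)>0$. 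This gives a root $t$. Theorem \ref{thm:PiR-atMost2} together with $g'=1-\ip{\nabla r,\omega}>0$ then gives uniqueness and smooth dependence. In the plane, this reproduces the two circles in the data set normal to $(\vx,\xi)$, one on each side of $\vx$.

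Therefore $\Pi_R:\Cc_{D_b}\to\dot{T^*}(D_b)$ is surjective, and $\Cc_{D_b}^t\circ\Cc_{D_b}=\Delta_{D_b}$. Since $t(\vx,\omega)$ is continuous on $\overline{D_b}\times S^{n-1}$, the centers are bounded, so $R:\Ec'(D_b)\to\Ec'(\rn)$ and $R^*R$ is defined without a cutoff. Positivity of the amplitudes $2r(\vy)>0$ of $R$ and $R^*$, as in the proof of Theorem \ref{thm:elliptic psido}, gives ellipticity. The order is $2\cdot(-(n-1)/2)=1-n$, which equals $-1$ for $n=2$, consistent with the statement.
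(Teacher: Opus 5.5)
Your proposal is correct, and its skeleton is the paper's: reduce to $D_b$, get Bolker from Theorem \ref{thm_cst_2}, and run the argument of Theorem \ref{thm:elliptic psido}. Where you differ is in how you obtain surjectivity of $\Pi_R$.

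The paper's proof only notes that $\abs{\nabla r}$ is bounded on the bounded set $Y_b$ by ``some $C>0$''. What is actually needed is $C<1$, and your explicit $C_b$ supplies that. The paper then cites Theorem \ref{thm:elliptic psido}. Its surjectivity step, Theorem \ref{thm:PiR-bound}, applies the contraction mapping theorem to $\rfr$ on all of $[0,\infty)$. That requires the gradient bound along the whole ray $\vx+t\omega$, where here $\abs{\nabla r}\to 1$. It also relies on the assertion that $r$ is smooth on $\rn$, which fails at $\zero$, a point the ray may cross.

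Your intermediate value argument for $g(t)=t-\rfr(t)$ avoids both issues, and in fact makes the strong norm inequality unnecessary:
\begin{itemize}
\item existence comes from the signs of $g$ at $0$ and at $\infty$;
\item uniqueness comes from $g$ being continuous and strictly increasing, since $g'>0$ off the single point where the ray might meet $\zero$;
\item smoothness comes from the Implicit Function Theorem at the root, which lies in $Y$ and hence away from $\zero$.
\end{itemize}
This is essentially a rigorous version of Remark \ref{rem:center points}. The paper's route is shorter and reuses a general theorem; yours is specific to this $r$ but actually closes. You are also right that the order is $1-n$, which equals $-1$ only when $n=2$.

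There are two small slips, neither of which affects the conclusion.
\begin{itemize}
\item The limit is $\rfr(t)-t\to \vx\cdot\omega-1$, not $-1$, because $\abs{\vx+t\omega}=t+\vx\cdot\omega+O(1/t)$. This limit is negative exactly because $\abs{\vx}<1$. That is the one place the hypothesis $\vx\in D$ enters your existence argument, so say it explicitly.
\item With $\vz=\vy/\abs{\vy}$, the choice $\vx=-b\vz$ leads to $\abs{\vy}+b=r(\vy)$, which gives $\abs{\vy}=\frac{\alpha^2+1-b^2}{2(1+b)}$. That is the nearest center, not the farthest. The upper bound $\frac{\alpha^2+1-b^2}{2(1-b)}$ comes from $\vx=b\vz$ and $\abs{\vy}-b=r(\vy)$.
\end{itemize}
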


\begin{proof}  
Let $b\in (0,1)$.  As noted,  $Y_b$ is the set
$Y_{D_b}$ in \eqref{def:YO}. Since $Y_b$ is bounded and $r$ is smooth
on $\rn$, there is a $C>0$ such that $\norm{\nabla r(\vy)}<C$ for all
$\vy\in Y_b$. 

Therefore, we can apply Theorem \ref{thm_cst_2} and Theorem
\ref{thm:elliptic psido} to infer $R^*R:\Ec'(D_b)\to \Dc'(D_b)$ is an
elliptic \psido of order $-1$. Then, $R^*R:\Ec'(D)\to \Dc'(D)$ is an
elliptic \psido because every distribution in $\Ec'(D)$ is in
$\Ec'(D_b)$ for some $b\in (0,1)$.
\end{proof}



 \begin{remark}\label{rem:center points} A calculation can be used to
directly prove that $\Pi_R$ is surjective. Let $\vx \in D$ and $\xi\in
\drn$. Let $S(\vx + t\xi)$ intersect $\vx$ for some $\xi \in S^{n-1}$
and $t \in \mathbb{R}$. Then,
\begin{equation}
|\vx - (\vx + t\xi)|^2 = r(\vx + t\xi)^2.
\end{equation}
It follows that
\begin{equation}
\begin{split}
t^2 &= \alpha^2 + (1-|\vx+t\xi|)^2\\
&= \alpha^2 + 1 - 2|\vx+t\xi| + |\vx|^2 +2t(\vx \cdot \xi) + t^2\\
&\hspace{-1cm}\implies 4\paren{ |\vx|^2 + 2t(\vx \cdot \xi) + t^2 } = \paren{ 2t(\vx \cdot \xi) + (1+\alpha^2 +|\vx|^2) } ^2.
\end{split}
\end{equation}
Rearranging this yields the quadratic to solve for $t$
\begin{equation}
\label{quad2} p(t) =4\paren{ 1 - (\vx\cdot \xi)^2 } t^2 + 4(\vx \cdot
\xi)\paren{ 1 - (\alpha^2 + |\vx|^2) }t + 4|\vx|^2 - (1+\alpha^2
+|\vx|^2) ^2 = 0.
\end{equation}
Then, a calculus exercise shows there are two solutions for $t$, one
positive, and one negative, and they define centers of the two spheres
in $Y$ that are normal to $(\vx,\xi)$.

This parameterization allows one to find the sphere centers for each
$S(\vy)$ that contains $\vx$. As $R$ is a Radon transform,  $R^*g(\vx)$ integrates over
this set of spheres. See Figure \ref{fig2.1} for an example when $n=2$. 
\end{remark}

\begin{figure}[!h]
\centering \includegraphics[width=0.9\linewidth, height=5cm,
keepaspectratio]{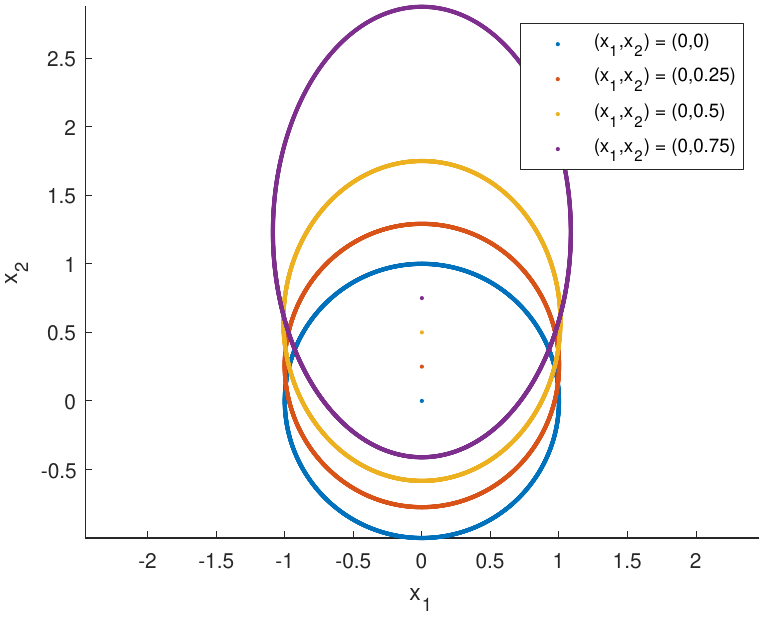} \caption{Plot of circle centers, $\vy$
that $R^*$ integrates over for varying $\vx$. Here, $n=2$. The
colored dots are the $\vx$ coordinates of the points listed in the figure, and
the corresponding colored  curves are the set of $\vy$ such that $S(\vy)$
intersects $\vx$. }
\label{fig2.1}
\end{figure}

\subsubsection{An inversion formula for this spherical transform in
$\rn$} In this section, we apply the generalized theory of
\cite{palamodov2012uniform} to derive inversion formulae for the
spherical transform in $\rn$, $R$, when $r(\vy) = \sqrt{\alpha^2 +
(1-|\vy|)^2}$ in $\rn$. This is, of course, the generalization of our
Compton transform in $\rtwo$, although it does not model Compton data
for $n>2$. 
To
apply the theory of \cite{palamodov2012uniform}, we will adapt our
notation slightly to align with that of \cite{palamodov2012uniform}.
Let us write $\vy = t \Theta$, where $t\in \mathbb{R}$ and $\Theta \in
S^{n-1}$. The spheres of integration in $\rn$ have the defining
equation
\begin{equation}
|\vx - t\Theta|^2 = \alpha^2 + (1-t)^2,
\end{equation}
which is equivalent to 
\begin{equation}
\lambda: = \frac{\sqrt{1+\alpha^2}}{2t} = \frac{p - (p\vx)\cdot
\Theta}{1-|p\vx|^2},
\end{equation}
where $p = 1/\sqrt{1+\alpha^2}$. Here, $\lambda,\Theta$ vary the
sphere centers and radii. For this section, we let $D$ be the open
unit ball in $\rn$ and 
$0 = \lambda - \Psi_1(\vx,\Theta) = \lambda - \frac{p - (p\vx)\cdot
\Theta}{1-|p\vx|^2}$ be the defining equation of our spheres.

Let $\Psi_2(\vy,\Theta) = \frac{p - \vy\cdot \Theta}{1-|\vy|^2}$.
Then, from \cite[Section 6]{palamodov2012uniform}, we have the definition of the
equidistant sphere transform
\begin{equation}
\mathcal{R}f(\lambda,\Theta) = \int_{\mathbb{R}^n}|\nabla_{\vy}\Psi_2|\delta\paren{ \lambda - \Psi_2(\vy,\Theta) }f(\vy)\mathrm{d}\vy
\end{equation}
In \cite[Section 6]{palamodov2012uniform}, an explicit left inverse for $\mathcal{R}$,
which we denote $\mathcal{R}^{-1}$, is provided when $0 \leq p < 1$
and $\text{supp}(f) \subset D$. The formula uses filtered
backprojection in a similar vein to the classical filtered
backprojection formula for the hyperplane Radon transform. For the
full expression for $\mathcal{R}^{-1}$, we refer the reader to \cite[Section 6]{palamodov2012uniform}. We now prove that $R$ and $\mathcal{R}$ are equivalent via diffeomorphism.

\begin{theorem}
\label{inv_cst_2}  Let $f$ be supported in $D$. 
Let $r(\vy)  = \sqrt{\alpha^2 + (1-|\vy|)^2}$. Then, 
\begin{equation}
Rf(\lambda,\Theta) = \frac{1}{p^{n-1}}\mathcal{R}\tilde{f}(\lambda,\Theta),
\end{equation}
where $\tilde{f}(\vy) = f\paren{ \frac{\vy}{p} }$.
\end{theorem}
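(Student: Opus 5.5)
The plan is to compare the two transforms as delta-function integrals, using the fact that the surface measure on a level set does not depend on which defining function is used, and then make the dilation $\vx = \vy/p$. The basic tool is the identity behind the middle line of \eqref{radon_def}: if $G$ is smooth near a hypersurface $\{G=0\}$ and $\nabla G\neq 0$ there, then $\int |\nabla G|\,\delta(G) f\,\dd\vx = \int_{\{G=0\}} f\,\dd S$. This follows from the coarea formula, or from Palamodov's theory \cite{palamodov2012uniform}. In particular, $|\nabla G|\delta(G)\,\dd\vx$ depends only on the zero set of $G$, not on the particular $G$.

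\textbf{Step 1 (algebra).} First I check that, for $\vy = t\Theta$ with $t\neq 0$ and $\vx\in D$, the sphere equation $|\vx-t\Theta|^2=\alpha^2+(1-t)^2$ is equivalent to $\lambda = \Psi_1(\vx,\Theta)$, where $\lambda = \sqrt{1+\alpha^2}/(2t)$. Expanding the sphere equation gives
\[|\vx|^2 - 2t\,\vx\cdot\Theta = 1+\alpha^2 - 2t,\]
which is linear in $t$:
\[2t(1-\vx\cdot\Theta) = 1+\alpha^2-|\vx|^2.\]
Dividing by $(1+\alpha^2)$ and rewriting with $p = 1/\sqrt{1+\alpha^2}$ should give
\[\frac{\sqrt{1+\alpha^2}}{2t} = \frac{p - p\vx\cdot\Theta}{1-|p\vx|^2}.\]
I must also note that the denominators do not vanish. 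Since $p<1$ and $|\vx|<1$, we have $|p\vx|<1$. The numerator $1+\alpha^2-|\vx|^2$ is also positive, so $t\neq 0$ and $\lambda$ is well defined. In addition, I need $\nabla_\vx\Psi_1\neq 0$ on the sphere, so that $G_1(\vx)=\lambda-\Psi_1(\vx,\Theta)$ is a legitimate defining function for $S(\vy)\cap D$. This should follow because $G_1$ is a nonvanishing smooth multiple of $\Psi$ near the sphere inside $D$, and $\nabla_\vx\Psi\neq 0$ there since $r>0$.

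\textbf{Step 2 (change of defining function and dilation).} By the invariance noted above, I can write
\[Rf(\lambda,\Theta)=\int_{\rn}|\nabla_\vx \Psi_1(\vx,\Theta)|\,\delta\paren{\lambda-\Psi_1(\vx,\Theta)}f(\vx)\,\dd\vx.\]
Now substitute $\vx = \vu/p$, so that $\dd\vx = p^{-n}\dd\vu$. By construction $\Psi_1(\vu/p,\Theta)=\Psi_2(\vu,\Theta)$. The chain rule then gives $\nabla_\vx\Psi_1 = p\,\nabla_\vu\Psi_2$. Combining these yields
\[Rf(\lambda,\Theta) = p^{1-n}\int |\nabla_\vu\Psi_2|\,\delta\paren{\lambda-\Psi_2(\vu,\Theta)}\,f(\vu/p)\,\dd\vu = \frac{1}{p^{n-1}}\mathcal{R}\tilde f(\lambda,\Theta).\]
The support of $\tilde f$ lies in $pD\subset D$, which matches the setting of \cite{palamodov2012uniform}.

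\textbf{Main obstacle.} The main risk is in justifying the replacement of the defining function $\Psi$ by $\lambda-\Psi_1$. Two things need care: the nonvanishing of the gradient, and a possible sign or orientation issue in $\delta$. The latter should be harmless, because $\delta$ is even and only $|\nabla G|$ enters. I also need to confirm that the delta identity is applied at fixed $(\lambda,\Theta)$, so that no Jacobian from the reparametrization $\vy\leftrightarrow(\lambda,\Theta)$ appears. This holds because $Rf$ is being evaluated pointwise, not integrated in $\vy$.
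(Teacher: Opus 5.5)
Your proposal is correct and follows the paper's proof. You write $Rf(\lambda,\Theta)=\int|\nabla_\vx\Psi_1|\,\delta(\lambda-\Psi_1)\,f\,\dd\vx$, substitute $\vx=\vy/p$ so that $|\nabla_\vx\Psi_1|=p|\nabla_\vy\Psi_2|$ and $\dd\vx=p^{-n}\dd\vy$, and read off the factor $p^{1-n}$. Your check that $\lambda-\Psi_1$ is a nonvanishing smooth multiple of $\Psi$, which leaves the weighted delta measure unchanged, supplies a justification the paper takes for granted.
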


\begin{proof}
Using the notation of \cite[Section 6]{palamodov2012uniform}, the defining function for the spheres of integration is
\begin{equation}
\Phi(\vx; \lambda,\Theta) = \lambda - \frac{p - (p\vx)\cdot \Theta}{1-|p\vx|^2}.
\end{equation}
Let $\Psi_1(\vx,\Theta) = \frac{p - (p\vx)\cdot \Theta}{1-|p\vx|^2}$ and $\Psi_2(\vy,\Theta) = \frac{p - \vy\cdot \Theta}{1-|\vy|^2}$. Then, we have
\begin{equation}
\begin{split}
Rf(\lambda,\Theta) &= \int_{\mathbb{R}^n}|\nabla_{\vx}\Psi_1|\delta\paren{ \lambda - \Psi_1(\vx,\Theta) }f(\vx)\mathrm{d}\vx\\
&= p\int_{\mathbb{R}^n}\left|\frac{\Theta}{1-|p\vx|^2} + \frac{2(p - (p\vx)\cdot\Theta)}{(1-|p\vx|^2)^2} \cdot (p\vx) \right|\delta\paren{ \lambda - \frac{p - (p\vx)\cdot \Theta}{1-|p\vx|^2} }f(\vx)\mathrm{d}\vx\\
&= \frac{1}{p^{n-1}}\int_{\mathbb{R}^n}\left|\frac{\Theta}{1-|\vy|^2} + \frac{2(p - \vy\cdot\Theta)}{(1-|\vy|^2)^2} \cdot \vy \right|\delta\paren{ \lambda - \frac{p - \vy\cdot \Theta}{1-|\vy|^2} }f\paren{ \frac{\vy}{p} }\mathrm{d}\vy\\
&= \frac{1}{p^{n-1}}\int_{\mathbb{R}^n}|\nabla_{\vy}\Psi_2|\delta\paren{ \lambda - \Psi_2(\vy,\Theta) }\tilde{f}(\vy)\mathrm{d}\vy\\
& = \frac{1}{p^{n-1}}\mathcal{R}\tilde{f}(\lambda,\Theta),
\end{split}
\end{equation}
where $\tilde{f}(\vy) = f\paren{ \frac{\vy}{p} }$ and we made the
substitution $\vy = p\vx$ in step 3. Note that $\tilde{f}$ is
supported in $D$ since $f$ is.
\end{proof}

\begin{corollary}
\label{corr_inv_pal}
Let $r(\vy)  = \sqrt{\alpha^2 + (1-|\vy|)^2}$ and let $f\in
L^2_c(D)$. Then 
\begin{equation}
f(\vx) = \mathcal{R}^{-1}\paren{ p^{n-1} Rf } (p\vx).
\end{equation}
\end{corollary}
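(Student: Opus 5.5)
Corollary \ref{corr_inv_pal} follows by combining Theorem \ref{inv_cst_2} with Palamodov's left inverse $\mathcal{R}^{-1}$ from \cite[Section 6]{palamodov2012uniform}. Since $\alpha>0$, we have $p = 1/\sqrt{1+\alpha^2}\in(0,1)$, which is the range of $p$ where Palamodov's inversion formula is valid. The support condition also holds: because $f\in L^2_c(D)$, the function $\tilde f(\vy)=f(\vy/p)$ is supported in $p\,\supp(f)\subset pD\subset D$, so $\tilde f$ is compactly supported in the open unit ball.

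The argument has three steps.

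\begin{enumerate}
\item Apply Theorem \ref{inv_cst_2} to get $p^{n-1}Rf(\lambda,\Theta)=\mathcal{R}\tilde f(\lambda,\Theta)$ for all parameters $(\lambda,\Theta)$. This identifies the data $p^{n-1}Rf$, written in the $(\lambda,\Theta)$ coordinates with $\vy=t\Theta$ and $\lambda=\sqrt{1+\alpha^2}/(2t)$, with Palamodov's equidistant sphere data of $\tilde f$.
\item Apply $\mathcal{R}^{-1}$ to both sides. Because $\mathcal{R}^{-1}\mathcal{R}g=g$ for $g$ supported in $D$, this gives $\tilde f=\mathcal{R}^{-1}(p^{n-1}Rf)$ as functions on $D$.
\item Evaluate at $\vy=p\vx$. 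Since $\tilde f(p\vx)=f(\vx)$, we obtain $f(\vx)=\mathcal{R}^{-1}(p^{n-1}Rf)(p\vx)$. This makes sense for every $\vx\in D$, because $p\vx\in D$.
\end{enumerate}

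The main obstacle is checking that Palamodov's left inverse really applies to $L^2_c(D)$ functions. His formula is stated for sufficiently regular (say smooth, or $L^2$) $g$ supported in $D$, so I would confirm that the identity $\mathcal{R}^{-1}\mathcal{R}g=g$ holds in the $L^2$ or distributional sense used in \cite{palamodov2012uniform}. If it is proved only for smooth $g$, I would extend it by density. I would approximate $f$ by smooth functions supported in a fixed compact subset of $D$, and pass to the limit using the continuity of $\mathcal{R}$ and of the filtered backprojection $\mathcal{R}^{-1}$ in the relevant Sobolev or distribution topologies. The identity then holds almost everywhere.

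A secondary point is that the change of variables $\vy = p\vx$ in Theorem \ref{inv_cst_2} was done inside the delta-function integral. For $L^2$ functions, I would read that identity in the weak sense, by pairing with test functions in $(\lambda,\Theta)$, so that step 1 is justified.
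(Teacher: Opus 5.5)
Your proposal is correct and matches the paper's proof, which likewise just notes that $0<p=1/\sqrt{1+\alpha^2}<1$ and that $\tilde f$ is supported in $D$. It then applies Theorem \ref{inv_cst_2} followed by Palamodov's left inverse $\mathcal{R}^{-1}$ and reads off $f(\vx)=\tilde f(p\vx)$. Your additional remarks go beyond what the paper supplies: extending $\mathcal{R}^{-1}\mathcal{R}g=g$ to $L^2_c(D)$ by density, and interpreting the delta-function identity weakly. They do not change the argument, although the density step would rest on continuity of $\mathcal{R}^{-1}$, which you would have to take from \cite{palamodov2012uniform}.
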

\begin{proof}
Since $0< p = 1/\sqrt{1+\alpha^2} < 1$ and $\text{supp}(f) \subset D$, we can apply Theorem \ref{inv_cst_2} and $\mathcal{R}^{-1}$ to obtain the result.
\end{proof}

\begin{corollary}\label{cor:Sobolev estimate}
 Let $D$ be the open unit disk in $\rn$. For $\vy\in D$, let \[r(\vy) =
\sqrt{\alpha^2 + (1-|\vy|)^2}.\] Let $B,\ B_1$, and $B_2$ be compact
subsets of $D$ such that $B\subset \intt(B_1)$ and $B_1\subset
\intt(B_2)$. Let $\phi_1$ and $\phi_2$ be in $\Dc(D)$ with $\phi_1=1$
on $B,\ \supp(\phi_1)\subset B_1$ and $\phi_2=1$ on $B_1,\
\supp(\phi_2)\subset B_2$. Let $\Nc' = \phi_2\Nc\phi_1$. For each
$s\in \rr$, there is a constant $C >0$ such that for each $f\in
H^s(B)$,
\begin{equation}
\left\|f\right\|_{H^s(B)} \leq
C\left\|\mathcal{N'}f\right\|_{H^{s+1}(B_2)}.
\end{equation} 
\end{corollary}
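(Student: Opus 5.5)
The plan follows the proof of Corollary \ref{corr_stab_1} almost verbatim, with $\Nc = R^*R$; by Theorem \ref{PDO_cst_2} no cutoff is needed here, so $R^\dagger = R$. Since $B_2$ is compact in $D$, fix $b\in(0,1)$ with $B_2\subset D_b$.

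\textbf{Step 1 (parametrix).} By Theorem \ref{PDO_cst_2}, $\Nc:\Ec'(D)\to\Dc'(D)$ is an elliptic \psido of order $-1$. So there is a properly supported parametrix $\Pc$ of order $1$ with $\Pc\Nc = I + K$ and $K$ smoothing.

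\textbf{Step 2 (Fredholm-type estimate).} Set $\Pc'=\phi_2\Pc$, and let
\[
K'=\phi_2K\phi_1+\phi_2\Pc(\phi_2^2-1)\Nc\phi_1 .
\]
For $f\in H^s(B)$ this gives $\Pc'\Nc' f = f + K' f$. The second term of $K'$ is smoothing on distributions supported in $B$, by pseudolocality, because $\phi_2^2-1$ vanishes on $B_1\supset \supp\phi_1$. The continuity of properly supported \psido{}s on Sobolev spaces then gives
\[
\Norm{f}_{H^s(B)}\le C\paren{\Norm{\Nc' f}_{H^{s+1}(B_2)}+\Norm{K'f}_{H^s(B_2)}},
\]
with $K':H^s(B)\to H^s(B_2)$ compact. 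Also, $\Nc'$ is continuous and hence closed.

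\textbf{Step 3 (injectivity).} This is the main point. Suppose $f\in H^s(B)$ and $\Nc' f=0$. Then $\Nc f=0$ on $B_1$, which is a neighborhood of $\supp f$. Ellipticity of $\Nc$ then gives $f\in C^\infty_c(B)\subset L^2_c(D)$.

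We next need $Rf=0$. Since $\Nc f$ may not vanish outside $B_1$, I would not pair in $H^s$. Instead I would use the smoothness of $f$ and pair in $L^2$:
\[
0=\ip{\Nc f,f}_{L^2}=\Norm{Rf}^2_{L^2(Y_b,\mu)},
\]
where $\mu$ is the positive measure for which $R^*$ is the adjoint. The pairing only needs $\Nc f$ on $\supp f\subset B_1$. It follows that $Rf=0$ on $Y_b$, and hence $Rf=0$ identically, since no other sphere meets $\supp f$.

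Corollary \ref{corr_inv_pal} then gives
\[
f(\vx)=\mathcal{R}^{-1}\paren{p^{n-1}Rf}(p\vx)=0 .
\]
So $\Nc'$ is injective on $H^s(B)$.

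\textbf{Step 4 (conclusion).} Lemma \ref{stef} removes the compact term and yields the stated estimate.

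The obstacle I expect is Step 3. One has to check that $R^*$ is genuinely the $L^2$-adjoint of $R$ with respect to a positive weight on $Y_b$. This should follow from the Guillemin–Sternberg / Quinto double-fibration framework, since the measures are smooth and positive. One must also verify that the left inverse $\mathcal{R}^{-1}$ from \cite{palamodov2012uniform} applies to the smooth compactly supported $\tilde f$.
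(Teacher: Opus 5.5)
Your proposal is correct and is essentially the paper's proof. Both use the parametrix identity $\Pc'\Nc'f=f+K'f$ with $K'$ compact, then injectivity of $\Nc'$ (ellipticity plus $\Nc f=0$ near $\supp f$, giving $Rf=0$, then Corollary \ref{corr_inv_pal}), and finally Lemma \ref{stef}. Your ordering is cleaner than the $H^s$ pairing the paper borrows from Corollary \ref{corr_stab_1}: you get smoothness first and then use the $L^2$ pairing $\ip{\Nc f,f}=\Norm{Rf}^2$, which holds because $R^*$ is the formal adjoint, as the paper takes it.

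One small algebraic fix: since $\Pc'\Nc'=\phi_2\Pc\phi_2\Nc\phi_1$, the remainder should be $K'=\phi_2K\phi_1+\phi_2\Pc(\phi_2-1)\Nc\phi_1$, as in the paper's proof of this corollary, not with $\phi_2^2-1$. This changes nothing else in your argument.
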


\begin{proof}
By Theorem \ref{PDO_cst_2}, there exists an order $1$ parametrix,
$\Pc$, of $\mathcal{N}$.  Without loss of generality, one may assume
$\Pc$ is properly supported.
Then, for
$f\in \Ec'(D)$ one can write \bel{parametrix1-4.2}\Pc\Nc f = I f+ K
f\ee for some smoothing operator $K$.

Now let  $f$ be supported in $B$.   By
pseudolocality of \psido, one can write \eqref{parametrix1-4.2} as
\begin{equation}
\label{parametrix2-4.2}
\begin{split}
\Pc'\Nc' f &=
f + K'f,\ \ \text{where}\\
\Pc' = \paren{\phi_2\Pc }, \ \
\Nc'&= \paren{\phi_2\Nc\phi_1},\ \ K' =
\phi_2K\phi_1+\phi_2\Pc(\phi_2-1)\Nc\phi_1,
\end{split}
\end{equation}
and  $K'$ is smoothing for distributions supported on $B$ since
$(\phi_2-1)$ is zero on $B$. 

Since $\Pc'$ and $\Nc'$ are properly supported \psido, for each $s\in
\rr$, \newline $\Pc':H^s(B_2)\to H^{s-1}(B_2)$ and $\Nc':H^s(B_2)\to
H^{s+1}(B_2)$, and $K':H^s(B)\to H^{s}(B_2)$ are continuous and $K'$
is a compact operator since it is smoothing on $B$.

Now, we have
\begin{equation}
\begin{split}
\|f\|_{H^s(B)}&\leq \|\mathcal{P}'\mathcal{N}'f\|_{H^{s}(B_2)} +
\|K'f\|_{H^s(B_2)} \\
&\leq C\paren{\|\mathcal{N}'f\|_{H^{s+1}(B_2)} + \|K'f\|_{H^s(B_2)}}. 
\end{split}
\end{equation}

The rest of the proof, using Lemma \ref{stef}, follows the same as the
proof of Corollary \ref{corr_stab_1} except the {smooth cutoff} $h$ is not
required.\end{proof}

%
%
%

 As a direct consequence of the above Corollary, we can
infer that the reconstruction of $f$ on from $Rf$ is (strongly) stable {on $B$.}

We now consider an example geometry where $r(\vy)$ is constant.

%% file: example4.3.tex
\subsection{The constant \texorpdfstring{$r$}{r} case}\label{sec:URT}
In this section, we apply our microlocal theory to the case when
$r(\vy) = r>0$ is constant. This example has been considered previously
in the literature, e.g., in \cite{agranovsky2011support,
john2004plane, Q1993mor}, and may have applications in URT, where
spherical waves are used to reconstruct the image. 


To prove our first set of theorems, we let \bel{def D0
Y0}D_0=\sparen{\vx\in \rn\st \norm{\vx}<r},\ \ Y_0=
\sparen{\vy\in\rn:\norm{\vy}<2r},\ee so $Y_0$ is the set of all centers of
spheres of radius $r$ that meet $D_0$ as in \eqref{def D0 Y0}.

Under this setup, we now show that $R$ satisfies the Bolker condition.
\begin{theorem}
\label{thm_ult} Let $D_0$ and $Y_0$ be as given in \eqref{def D0 Y0} and let the
radius function $r(\vy) = r$ be constant. Then $R:\Ec'(D_0)\to
\Ec'(Y_0)$ is an elliptic FIO which satisfies the Bolker condition.
\end{theorem}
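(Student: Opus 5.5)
The proof will follow the template of Theorems \ref{thm_cst_1} and \ref{thm_cst_2}, and it is simpler here because $\nabla_\vy r\equiv 0$.

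\emph{FIO and immersion.} Since $r$ is a positive constant, $|\nabla_\vy r| = 0<1$. Proposition \ref{fio_prop} then gives that $R$ is an elliptic FIO of order $-(n-1)/2$ with phase $\sigma(|\vx-\vy|^2-r^2)$. Theorem \ref{imm_thm} gives that $\Pi_L$ is an immersion.

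\emph{Mapping property.} To see that $R:\Ec'(D_0)\to\Ec'(Y_0)$, take $f$ with compact support $K\subset D_0$. Then $\max_{\vx\in K}|\vx|<r$. A sphere $S(\vy)$ meets $K$ only if $|\vy|\le |\vx|+r$ for some $\vx\in K$. Hence $\supp(Rf)$ lies in the closed ball of radius $\max_K|\vx|+r<2r$, which is a compact subset of $Y_0$.

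\emph{Injectivity of $\Pi_L$.} Here $\vz=\vy-r\nabla_\vy r=\vy$, so by Remark \ref{rem:artifact} the artifact point of $\vx\in S(\vy)$ is the antipodal point $\hvx=2\vy-\vx$. By Theorem \ref{inj_thm}, or equivalently Proposition \ref{prop_geo}, it suffices to show that $\vx$ and $2\vy-\vx$ cannot both lie in $D_0$. If they did, $\vy=(\vx+\hvx)/2$ would be their midpoint, and convexity of the ball gives $|\vy|<r$. But $|\vx-\hvx|=2r$ is a diameter of $S(\vy)$. The chord between two points of the open ball of radius $r$ has length strictly less than $2r$. This contradiction shows that $\Ac(D_0)\cap D_0=\emptyset$, so the Bolker condition holds on $\Cc_{D_0}$.

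\emph{Main obstacle.} The only delicate point is making sure the strict inequality in the diameter argument uses the openness of $D_0$. I do not expect real difficulty; the rest is direct invocation of earlier results.
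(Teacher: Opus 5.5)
Your proposal is correct and follows essentially the same route as the paper: Proposition \ref{fio_prop} and Theorem \ref{imm_thm} give the elliptic FIO and the immersion, and the antipodal artifact point $2\vy-\vx$ lies outside $D_0$, which gives injectivity of $\Pi_L$ via Proposition \ref{prop_geo} (equivalently Theorem \ref{inj_thm}). Your triangle-inequality chord argument and your support argument for $R:\Ec'(D_0)\to\Ec'(Y_0)$ are valid details that the paper only asserts or leaves implicit; the midpoint/convexity step is unnecessary but harmless.
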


\begin{proof}
We have $\nabla_\vy r = 0 <1$, and thus $R$ is an elliptic FIO with
immersive $\Pi_L$ by Proposition \ref{fio_prop} and Theorem
\ref{imm_thm}.   

As noted in Remark \ref{rem:artifact}, for this transform, if $\vy\in
\rn$ and $\vx\in S(\vy)$, then the artifact point for $\vy$ is the
antipodal point $\vx -2(\vx-\vy)$. If $\vx\in D_0$, and $\vy\in Y_0$,
the antipodal
point is outside of $D_0$. Therefore $D_0 \cap \Ac(D_0) =
\emptyset$, and $\Pi_L$ is injective above $D_0$ by Proposition
\ref{prop_geo}, and the result follows.
\end{proof}

In this case, the backprojection operator is
\begin{equation}
R^*g(\vx) = \int_{S^{n-1}} g(\vx + r\xi)\mathrm{d}\xi,
\end{equation}  which is the integral of $g$ over $S(\vx)$.

\begin{theorem}\label{R*R elliptic psido}
Let $r(\vy) = r$ and let $f\in \Ec'(D_0).$
Then, $R^*R:\Ec'(D_0)\to \Dc'(D_0)$ is an elliptic \psido.
\end{theorem}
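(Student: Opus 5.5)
The plan is to deduce the statement from Theorem \ref{thm:elliptic psido} with $\Omega = D_0$, after checking its hypotheses and noting that no cutoff is needed to compose $R^*$ and $R$.

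First, the radius function $r(\vy)=r$ is smooth and positive, and $\nablay r\equiv 0$. Hence the norm inequality \eqref{norm1} holds on $\rn$. The strong norm inequality \eqref{strong norm} holds on $Y_{D_0}$ for any $C\in(0,1)$, say $C=1/2$.

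Next, Theorem \ref{thm:elliptic psido} requires the hypotheses of Theorem \ref{inj_thm}. That is, for each $(\vy,\eta)\in\Pi_L(\Cc)$ at most one of $\vx,\hvx$ lies in $D_0$. By Remark \ref{rem:artifact}, $\hvx=2\vy-\vx$ is the antipodal point on $S(\vy)$. If both $\vx$ and $2\vy-\vx$ were in $D_0$, then $\vx-\vy$ and $\vy-\vx$ would be antipodal points of the sphere $\vy+rS^{n-1}$. Their midpoint is the center $\vy$, and the segment joining them is a diameter of length $2r$. But both endpoints lie in the open ball $D_0$ of radius $r$, so they are at distance $<2r$ from each other, a contradiction. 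This is the content of the argument already used in Theorem \ref{thm_ult}, so I would simply cite that theorem and Proposition \ref{prop_geo} to conclude that $\Cc_{D_0}$ satisfies Bolker. Theorem \ref{thm:elliptic psido} then gives that $R^*R:\Ec'(D_0)\to\Dc'(D_0)$ is an elliptic \psido of order $1-n$.

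The only point needing care is composability. Theorem \ref{thm:elliptic psido} itself shows $R:\Ec'(D_0)\to\Ec'(\rn)$, and here this is explicit: if $\supp f\subset D_0$ is compact, then $\supp Rf\subset\supp f+\bd(B(0,r))$, which is compact and contained in $Y_0$. Surjectivity of $\Pi_R$ is also explicit. For $(\vx,\xi)$ with $\vx\in D_0$, the two centers $\vy=\vx\pm r\xi/\norm{\xi}$ give spheres through $\vx$ conormal to $\xi$, matching Theorem \ref{thm:PiR-bound}. So $\Cc_{D_0}^t\circ\Cc_{D_0}=\Delta_{D_0}$. The symbol of $R^*R$ at $(\vx,\xi)$ is then a sum of two positive terms coming from the positive weights $2r$ of $R$ and the positive measure $\dd\xi$ of $R^*$, so $R^*R$ is elliptic.

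I expect no genuine obstacle. The only subtle point is making sure the restriction to $D_0$, rather than all of $\rn$, is what rules out antipodal artifacts, and this is handled by the diameter argument above.
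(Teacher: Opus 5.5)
Your proposal is correct and follows the paper's proof. The paper's argument is a one-liner: $\nablay r\equiv 0$ gives the strong norm inequality \eqref{strong norm}, and Theorem \ref{thm_ult} (the antipodal artifact point cannot lie in $D_0$, so the hypothesis of Theorem \ref{inj_thm} holds) is then combined with Theorem \ref{thm:elliptic psido}. Your diameter argument, the explicit bound $\supp Rf\subset \supp f + r\snm\subset Y_0$, and the explicit preimages $\vx\pm r\xi/\norm{\xi}$ just fill in details the paper leaves implicit. One small slip: the antipodal points on $\vy+r\snm$ are $\vx$ and $2\vy-\vx$, not $\vx-\vy$ and $\vy-\vx$.
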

\begin{proof}
Since $\nabla_\vy r = 0$, the strong norm inequality \eqref{strong
norm} holds, and the result follows from Theorem \ref{thm_ult} and
Theorem \ref{thm:elliptic psido} because the uniqueness assumption of
Theorem \ref{inj_thm} holds for $D_0$ and $Y_0$.
\end{proof}


\subsubsection{An inversion method}
In Theorems \ref{thm_ult} and \ref{R*R elliptic psido} the centers
$\vy\in {Y_0}$ can be inside the reconstruction space ($D_0$). We
now discuss injectivity of $R$ when $\vy$ is outside the scanning region. Specifically, we consider the geometry of figure \ref{fig3}. {Later, in section \ref{stable_conds_urt}, we consider the stability of inverting $R$ when $\vy$ is outside of $D_0$.}

This transform in, e.g., $\rthree$ can be a model for URT when data
are taken at one {fixed time and with constant sound speed}, so
on spheres of fixed radius with centers outside the object, which we
assume is {supported on $D_0$.} In URT, $\vy$ represents the
ultrasound emitter/receiver location and $r(\vy) = r$ represents the
scanning depth at one specific time. 

Let $d\in (0,r)$. The target function, $f$, is supported on the
annulus \bel{def D' Y'}D' = \{\vx : d<|\vx| < r\},\ \ Y'=\sparen{\vy:
d+r<|\vy|<2r} \ee as is illustrated in figure \ref{fig3}.

We now provide an injectivity result for $R$ in the geometry of figure
\ref{fig3} using a spherical harmonic decomposition (this would be a polar Fourier series in
{$\rr^2$}) and Volterra integral equations. The proof follows similar
ideas to \cite{Q1983-rotation}.

Let $\vy\in Y'$, and choose the unique $s\in (d,r)$ and $\omega\in
S^{n-1}$ such that $\vy = (s+r)\omega$.
The spherical coordinates of each point  $\vz\in S(\vy)\cap D'$ are
$(\rho,\xi)$  where 
\bel{spherical coords}\rho =
\norm{\vz}\in [s,r],\ \ \xi = \vz/\norm{\vz}\in \{\eta\in S^{n-1}\st
\eta \cdot \omega \geq \frac{s+r}{2r}\}.\ee Therefore, $\xi$ is in a
spherical cap and the angle between $\xi$ and $\omega$ is \[\varphi=
\arccos(\xi\cdot\omega) \in\left[0,\cos^{-1}\paren{ \frac{s+r}{2r}
}\right]\] Here we use that $f$ is supported in $D'$ and the
notation in figure \ref{fig3}.

By the cosine rule, we have
\begin{equation}
r^2 = (r+s)^2 +\rho^2 - 2\rho(r+s)\cos\varphi,
\end{equation}
which implies $\rho^2 - 2(r+s)(\cos\varphi)\rho + (s^2+2rs) = 0$ and
$\cos\varphi = \frac{\rho^2 + s^2 + 2sr}{2\rho(s+r)}$.
 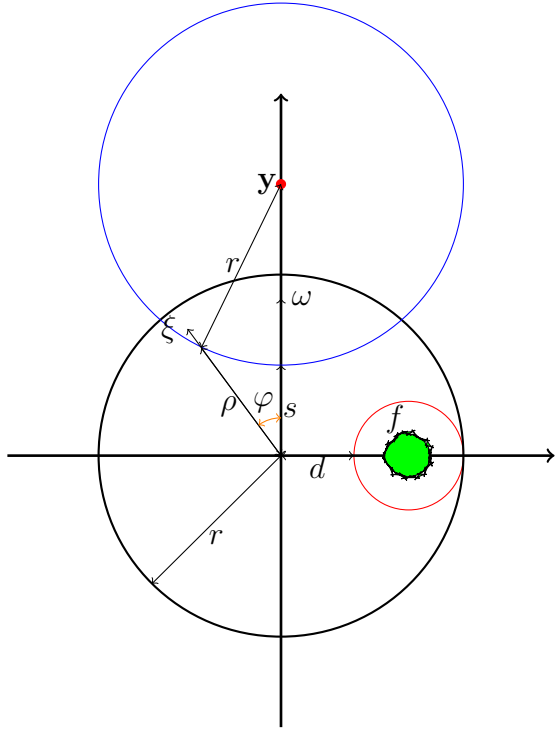
\begin{figure}[!h]
 \centering
 \begin{tikzpicture}[scale=0.6]
 \draw [red] (2.8,0) circle (1.2);
 \node at (2.5,0.8) {$f$};
 \draw [->,line width=1pt] (-6,0)--(6,0);
 \draw [->,line width=1pt] (0,-6)--(0,8);
 \draw [->] (0,0)--(0,3.47)node[right] {$\omega$};
 \draw [thick] (0,0) circle (4);
 \draw [<->] (0,0)--(-2.83,-2.83);
 \node at (-1.415,-1.8) {$r$};
 \draw [blue] (0,6) circle (4);
 \draw [red,fill=red] (0,6) circle (0.1);
 \node at (-0.3,6) {$\vy$};
 \draw[fill=green,rounded corners=1mm] (2.8,0) \irregularcircle{0.5cm}{0.5mm};
 \draw [<->] (0,0)--(0,2);

 \node at (0.2,1) {$s$};
 \draw [<->] (0,0)--(-1.75,2.38);
 \draw [->] (0,0)--(-2.05,2.8)node[left] {$\xi$};
 \node at (-1.15,1.1) {$\rho$};
 \draw [<->] (-1.75,2.38)--(0,6);
 \node at (-1.05,4.2) {$r$};
 \coordinate (D) at (-2.05,2.8);
 \coordinate (w) at (0,0);
 \coordinate (a) at (0,3.47);
 \draw [<->] (0,0)--(1.6,0);
 \node at (0.8,-0.25) {$d$};
 \draw pic[draw=orange, <->,"$\varphi$", angle eccentricity=1.5] {angle = a--w--D};
 \end{tikzpicture}
 \caption{Spherical ultrasound scanner.}
 \label{fig3}
 \end{figure}
For a unique  \bel{def:tau}\tau \in \omega^\perp :=\{\eta \in S^{n-1} : \eta \cdot \omega =
0\},\ \ 
\xi = \cos(\vp)\omega+\sin(\vp)\tau.\ee

Let $s\in{(d,r)}$, let $\omega\in S^{n-1}$, then, $(s,\omega)$ provide coordinates for  \[\vy =
(r+s)\omega\in Y'.\] For each $\vy\in Y'$, we parameterize $S(\vy)\cap D'$ in spherical
coordinates by
\begin{equation}\begin{gathered}
(\varphi,\tau)\mapsto (\rho,\xi) = \paren{ (r+s)\cos\varphi -
\sqrt{ (r+s)^2\cos^2\varphi - (s^2 +2rs) }, \cos(\varphi)\omega +
\sin(\varphi)\tau },\\
 \text{where}\ \ \vp\in \left[0,\cos^{-1}\paren{ \frac{s+r}{2r}
}\right],\ \tau\in \omega^\perp.\end{gathered}
\end{equation}
 See figure \ref{fig3}.  For each fixed $\vy$, we will use this
 parameterization in our proof.


 The standard measure on $S^{n-1}$ satisfies $\mathrm{d}\xi =
(\sin\varphi)^{n-2}\mathrm{d}\varphi \mathrm{d}\tau$. Let
\begin{equation}
\label{rho}
\rho = \rho(t,s) = (r+s)t - \sqrt{ (r+s)^2t^2 - (s^2 +2rs) },
\end{equation}
where $t =\cos \varphi = \xi \cdot \omega$. Then, the surface measure on $S$ is
\begin{equation}
\begin{split}
\mathrm{d}S &= (\rho \sin\varphi)^{n-2} \sqrt{ \rho(\cos\varphi,s)^2 + \sin^2\varphi\rho_t(\cos\varphi,s)^2 } \mathrm{d}\varphi\mathrm{d}\tau\\
&= \rho(\xi \cdot \omega,s)^{n-2} \sqrt{ \rho(\xi \cdot \omega,s)^2 + (1- (\xi \cdot \omega)^2)\rho_t(\xi \cdot \omega,s)^2 } \mathrm{d}\xi.
\end{split}
\end{equation}
Then,  for $f \in L^2_c\paren{ \{|\vx| <  r\} }$, $Rf$ has the alternate expression
\begin{equation}
\label{alt_R}
Rf(s,\omega) = \int_{\xi \in S^{n-1}\cap \{\xi \cdot \omega \geq \frac{s+r}{2r}\}} f(\rho(\xi \cdot \omega,s),\xi) \mathrm{d}S.
\end{equation}


\begin{theorem}
\label{inv_ult}
Let $r(\vy) = r$. Then, $Rf(\vy)$  for $\vy \in {Y'} $ determines
$f\in L^2_c(D')$ uniquely.
\end{theorem}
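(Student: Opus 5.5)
We expand $f$ in spherical harmonics and reduce $Rf=0$ to a family of one-dimensional Volterra equations, one for each degree; this is the approach signalled before the statement, following \cite{Q1983-rotation}. Write $f(\rho\xi)=\sum_{l,m} f_{lm}(\rho)Y_{lm}(\xi)$ with $f_{lm}\in L^2$ supported in $[d,r]$. In the coordinates $\vy=(r+s)\omega$, the expression \eqref{alt_R} shows that $Rf(s,\omega)$ is a rotation-invariant operator in $\omega$ for each fixed $s$. Indeed, the integrand depends on $\xi$ only through $\xi\cdot\omega$, apart from $f$, because the weight $\rho(\xi\cdot\omega,s)^{n-2}\sqrt{\rho^2+(1-(\xi\cdot\omega)^2)\rho_t^2}$ is zonal.

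By the Funk--Hecke theorem, the degree-$l$ component of $Rf(s,\cdot)$ is then
\[
(Rf)_{lm}(s)=c_n\int_{(s+r)/(2r)}^{1} f_{lm}\paren{\rho(t,s)}\,W(t,s)\,C_l(t)\,(1-t^2)^{(n-3)/2}\,\dd t .
\]
Here $C_l$ is the Gegenbauer (Legendre) polynomial normalized by $C_l(1)=1$, and $W$ is the zonal weight above. Thus $Rf=0$ on $Y'$ gives $(Rf)_{lm}(s)=0$ for all $s\in(d,r)$, all $l$ and all $m$.

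Next we change variables from $t$ to $\rho$. For fixed $s$, the map $t\mapsto\rho(t,s)$ is a diffeomorphism from $\paren{(s+r)/(2r),1}$ onto $(s,r)$: at $t=1$ we get $\rho=s$, and at $t=(s+r)/(2r)$ we get $\rho=r$. Its inverse is the cosine rule $t=\frac{\rho^2+s^2+2sr}{2\rho(s+r)}$. The equation becomes
\[
0=\int_s^r f_{lm}(\rho)\,K_l(s,\rho)\,\dd\rho,\qquad s\in(d,r),
\]
with kernel $K_l(s,\rho)=W\,C_l(t)\,(1-t^2)^{(n-3)/2}\,|\partial t/\partial\rho|$ evaluated at $t=t(\rho,s)$. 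This is a Volterra equation of the first kind anchored at $\rho=r$, where $f_{lm}$ vanishes near $\rho=r$ after shrinking the support slightly.

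The main obstacle is the behaviour of $K_l$ on the diagonal $\rho=s$, i.e. $t=1$. There $C_l(1)=1$, but $(1-t^2)^{(n-3)/2}$ and the Jacobian $\partial t/\partial\rho$ may vanish or blow up. In fact $1-t$ vanishes to first order in $\rho-s$, so the kernel behaves like $(\rho-s)^{(n-3)/2}$ times a smooth function $\kappa_l(s,\rho)$ that is nonzero on the diagonal. To handle this I would first compute $\kappa_l(s,s)$ explicitly and check that it is nonzero and bounded for $s\in[d',r]$ with $d'>d$. Then, if $n=3$, I would differentiate in $s$ to get a second-kind equation
\[
f_{lm}(s)\,\kappa_l(s,s)=\int_s^r f_{lm}(\rho)\,\partial_s K_l\,\dd\rho .
\]
For general $n$, I would apply an Abel-type fractional integral of order $(n-1)/2$, as is standard for weakly singular kernels, to reach a second-kind Volterra equation with a bounded, or at least integrable, kernel.

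Care is needed because the polynomial factor $C_l(t)$ introduces no zeros near $t=1$ but grows with $l$. This is harmless, since each $l$ is treated separately. By the standard uniqueness for second-kind Volterra equations (Gronwall or a Neumann series), $f_{lm}=0$ on $(d,r)$. Hence $f=0$.
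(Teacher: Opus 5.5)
Your proposal is correct and is essentially the paper's proof. Both go through the spherical harmonic expansion, Funk--Hecke, and the substitution $t\mapsto\rho$, which gives the generalized Abel equation $\int_s^r(\rho-s)^{(n-3)/2}K(\rho,s)f_{lm}(\rho)\,\dd\rho=0$ with $K$ smooth and $K(s,s)>0$, followed by Volterra uniqueness; you sketch that last step by differentiation or fractional integration, whereas the paper computes $K$ explicitly and simply cites \cite[Corollary 3.4]{web}. One caveat, which you share with the paper's \eqref{alt_R}: your claim that $t\mapsto\rho(t,s)$ maps $\paren{\frac{s+r}{2r},1}$ onto $(s,r)$ fails when $s<(\sqrt2-1)r$, because rays from the origin are then tangent to $S(\vy)$ at radius $\sqrt{s^2+2rs}<r$ and $\rho\paren{\frac{s+r}{2r},s}=\frac{s(s+2r)}{r}<r$; however, the coarea formula for $\vx\mapsto\norm{\vx}$ on $S(\vy)$ gives the same $\rho$-kernel, $\frac{r\rho^{n-2}}{r+s}(1-t^2)^{(n-3)/2}C_l(t)$ up to a constant, for every $s\in(d,r)$, so the Volterra equation and the conclusion are unaffected.
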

\begin{proof}

Let us decompose $f$ into a spherical harmonic expansion \[f(\rho,\xi)
= \sum_{lm}f_{lm}(\rho)Y_{lm}(\xi),\] where the $Y_{lm}$ form an
orthonormal basis of $L^2(S^{n-1})$ and each $Y_{lm}$ is a spherical
harmonic of degree $l$. Then, when $n\geq 3$, using equation
\eqref{alt_R}, we have
\begin{equation}
\label{n3}
\begin{split}
R\paren{f_{lm}  Y_{lm}}(s,\omega) &= \int_{\xi \in S^{n-1}\cap \{\xi \cdot \omega \geq \frac{s+r}{2r}\}} f_{lm}\paren{ \rho(\xi \cdot \omega,s) }Y_{lm}(\xi) \mathrm{d}S\\
&= c_n Y_{lm}(\omega) \int_{\frac{s+r}{2r}}^1 g(t,s) C_l^{\lambda}(t)f_{lm}(\rho(t,s))(1-t^2)^{\lambda - 1/2} \mathrm{d}t,
\end{split}
\end{equation}
where 
$$g(t,s) = \rho(t,s)^{n-2} \sqrt{ \rho(t,s)^2 + (1- t^2)\rho_t(t,s)^2 },$$
$c_n = \omega_{n-1}/C_l^{\lambda}(1)$, $\omega_n$ is the area of $S^{n-1}$ and the $C_l^{\lambda}$ are Gegenbauer polynomials degree $l$, order $\lambda = (n-2)/2$. The second step above follows from the Funk-Hecke Theorem \cite[pg 247]{ery}. When $n=2$, this becomes
\begin{equation}
\begin{split}
R\paren{f_{l} e^{-i l \xi }}(s,\omega) &= 2 e^{-i l \omega } \int_{\frac{s+r}{2r}}^1 g(t,s) T_l(t)f_{l}(\rho(t,s))(1-t^2)^{-1/2} \mathrm{d}t,
\end{split}
\end{equation}
where in this case $\xi, \omega \in [0,2\pi]$ parametrize $S^1$, the $f_l$ are the Fourier components of $f$, and $T_l$ is a Chebyshev polynomial degree $l$.

Defining $Rf_{lm}(s) = \int_{S^{n-1}} Rf(s,\omega) Y_{lm}(\omega) \mathrm{d}\omega$, we have
\begin{equation}
\label{volt_1}
Rf_{lm}(s) = c_n \int_{s}^r h(t(\rho,s),s) C_l^{\lambda}(t(\rho,s))f_{lm}(\rho)(1-t(\rho,s)^2)^{\lambda - 1/2} \mathrm{d}\rho,
\end{equation}
where
$$t = t(\rho,s) = \frac{\rho^2 + s^2 + 2sr}{2\rho(s+r)},$$
and
$$h(t,s) = \frac{g(t,s)}{\rho_t(t,s)} =  \rho(t,s)^{n-2} \sqrt{ \frac{\rho(t,s)^2}{\rho_t(t,s)^2} + (1- t^2)}.$$
We focus on the $n\geq 3$ case here as the $n = 2$ case is analogous. 

We have
$$\rho_t(t,s)^2 = (r+s)^2\paren{ 1 - \frac{(r+s) t}{\sqrt{t^2(r+s)^2 - (s^2+2rs)}} }^2 = \frac{ \rho^2(r+s)^2 }{ t^2(r+s)^2-(s^2+2rs) },$$
and thus,
\begin{equation}
K_1(\rho,s) = h(t(\rho,s),s) =  \rho^{n-2} \sqrt{ \frac{t(\rho,s)^2(r+s)^2-(s^2+2rs)}{(r+s)^2} + (1- t(\rho,s)^2)}.
\end{equation}
Now, we have
\begin{equation}
\begin{split}
1-t(\rho,s)^2 &= \frac{ (\rho^2 - s^2)\paren{ (2r+s)^2 - \rho^2 } }{ 4\rho^2(s+r)^2 }\\
&= (\rho - s) K_2(\rho,s),
\end{split}
\end{equation}
where $K_2 \in C^{\infty}(T)$ and $K_2 > 0$ on $T = \{d \leq s\leq r, s\leq \rho \leq r\}$. Therefore, $K_2$ must also be bounded away from zero on $T$. So,
\begin{equation}
\begin{split}
K_1(\rho,s) &=  \rho^{n-2} \sqrt{ t(\rho,s)^2 -\frac{(s^2+2rs)}{(r+s)^2} + (\rho-s)K_2(\rho,s) }\\
&= \frac{ \rho^{n-2} }{ 2\rho(r+s) } \sqrt{(\rho^2+s^2+2sr)^2 - 4\rho^2(s^2+2sr)+(\rho^2-s^2)((2r+s)^2-\rho^2) }\\
&= \frac{ r \rho^{n-2}}{ (r+s) },
\end{split}
\end{equation}
after reducing the polynomial under the square root in the second step. Thus, $K_1 \in C^{\infty}(T)$.

Let us define
$$K(\rho,s) = c_n K_1(\rho,s) \paren{K_2(\rho,s)}^{(n-3)/2} C_l^{\lambda}(t(\rho,s)).$$ 
For example, when $n=3$, $K$ has the simple expression
$$K(\rho,s) = 2\pi \cdot \frac{ r \rho}{ (r+s) } \cdot P_l\paren{ \frac{\rho^2 + s^2 + 2sr}{2\rho(s+r)} },$$
where $P_l$ is a Legendre polynomial degree $l$. For general $n$, $K
\in C^{\infty}(T)$ since \newline$t(\rho,s) \in C^{\infty}(T)$, and
$K_2 \in C^{\infty}(T)$ is bounded away from zero on $T$. Now,
\eqref{volt_1} becomes
\begin{equation}
\label{volt_2}
Rf_{lm}(s) = \int_{s}^r(\rho - s)^{(n-3)/2}K(\rho,s) f_{lm}(\rho)\mathrm{d}\rho,
\end{equation}
a generalized Abel equation with smooth kernel. Further, 
\begin{equation}
K_1(s,s) = \frac{rs^{n-2}}{(r+s)},
\end{equation}
which is strictly greater than zero on $[d,r]$. Therefore,
\begin{equation}
K(s,s) = \omega_{n-1} K_1(s,s) \paren{K_2(s,s)}^{(n-3)/2} > 0
\end{equation}
on $[d,r]$. We can now solve \eqref{volt_2} uniquely for $f_{lm}$, for $\rho \in [d,r]$ using \cite[Corollary 3.4]{web}. If $Rf_{lm} = 0$, then $f_{lm} = 0$ on $[d,r]$, and thus $\|f_{lm}\|_{L^2([d,r])} = 0$. It follows that, if $Rf = 0$, $f = 0$ in $L^2$ since all its harmonic components are zero.
\end{proof}

\subsubsection{Stability with limited data}
\label{stable_conds_urt}

We now introduce a set of centers that is sufficient for weak
stability, as we will show. Let \bel{def Y''}Y'' = \sparen{\vy\in
\rn\st r<\norm{\vy}<2r} {\supset Y'.}\ee Let $D'$ be given in \eqref{def D' Y'}.

In the last section, we proved that $R$ is invertible for functions
$f\in L^2_c(D')$ from data with $\vy\in Y'$ (see Theorem \ref{inv_ult}
and \eqref{def D' Y'}). Therefore, $R$ is invertible on $L^2_c(D')$
for $\vy$ in the larger set $ Y''$.



 For $\vx\in D'$, let \bel{def:H_r} H_r(\vx) = \sparen{\vy\in
S(\vx)\st \vy - \vx) \cdot (\vx/\norm{\vx}) \geq 0}.\ee Then
$H_r(\vx)$ is an closed hemisphere of points $\vy$ such that $\vx\in
S(\vy)$. Furthermore, a straightforward calculation shows the closest
point in $H_r(\vx)$ to the origin is  greater than distance $\sqrt{d^2+r^2}>r$
units from the origin and the farthest point is less than $2r$ units
from the origin, so \bel{H_r}H_r(\vx)\subset Y'', \ \ \text{and}\ \
H_r(\vx)\cap \cl(D')=\emptyset.\ee

\begin{theorem}
\label{lim_ult} Let $f$ be a distribution supported in $D'$. Let
$Rf(\vy)$ be known for $\vy\in Y''$. Then, the reconstruction of $f$
from $Rf$ is weakly stable everywhere in $D'$.  That is,
$R:\Ec'(D')\to \Dc'(Y'')$ is weakly stable.\end{theorem}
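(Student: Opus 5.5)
We verify the two conditions of Definition \ref{stable_def} at every $\vx\in D'$, with data set $Y''$ and $\Omega = D'$. Since $r(\vy)=r$ is constant, $\nabla_\vy r\equiv 0$, so \eqref{norm1} holds and $R$ is an elliptic FIO by Proposition \ref{fio_prop}. By Corollary \ref{cor:weak stability}, weak stability on $D'$ is equivalent to two facts: surjectivity of $\Pi_R$ onto $\dot{T^*}(D')$ using only centers in $Y''$, and the Bolker condition above $D'$. We check each in turn.

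\emph{Condition \eqref{1}.} Fix $\vx\in D'$ and $\omega\in S^{n-1}$. The spheres through $\vx$ with $\vx-\vy$ parallel to $\omega$ have centers $\vy=\vx\pm r\omega$. Both lie on $S(\vx)$, and they are antipodal on it. Set $\nu = \vx/\norm{\vx}$. Then $(\vx+r\omega-\vx)\cdot\nu = r\,\omega\cdot\nu$ and $(\vx-r\omega-\vx)\cdot\nu=-r\,\omega\cdot\nu$, so at least one of these two has nonnegative inner product with $\nu$. That center lies in the closed hemisphere $H_r(\vx)$ of \eqref{def:H_r}, and hence in $Y''$ by \eqref{H_r}. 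I would also check the claim behind \eqref{H_r} directly. For $\vy=\vx+r\theta$ with $\theta\cdot\nu\ge 0$,
\[
\norm{\vy}^2=\norm{\vx}^2+2r\norm{\vx}(\theta\cdot\nu)+r^2\ge d^2+r^2>r^2,
\]
and
\[
\norm{\vy}\le \norm{\vx}+r<2r .
\]
So every direction $\omega$ is realized by some center in $Y''$, which is condition \eqref{1}. Equivalently, $\Pi_R:\Cc_{D'}\to\dot{T^*}(D')$ is surjective.

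\emph{Condition \eqref{2}.} By Remark \ref{rem:artifact}, with constant $r$ the artifact point of $\vx$ on $S(\vy)$ is the antipode $\hvx = 2\vy-\vx$. The goal is to show that, whenever $\vy\in Y''$ and $\vx\in D'\cap S(\vy)$, the point $2\vy-\vx$ lies outside $D'$, so that $\Ac(D')\cap D'=\emptyset$. Suppose instead that both $\vx$ and $\hvx=2\vy-\vx$ lie in $D'\subset\{\norm{\cdot}<r\}$. Their midpoint is $\vy$, so by convexity of the ball $\norm{\vy}<r$. This contradicts $\vy\in Y''$, where $\norm{\vy}>r$. Hence at most one of $\vx,\hvx$ lies in $D'$. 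Theorem \ref{inj_thm} then gives injectivity of $\Pi_L$ on $\Cc_{D'}$, and Proposition \ref{prop_geo} gives the Bolker condition and $\vx\notin\Ac(D')$.

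With both conditions holding for every $\vx\in D'$, Definition \ref{stable_def} shows that $R:\Ec'(D')\to\Dc'(Y'')$ is weakly stable. The main subtlety is bookkeeping rather than mathematics. The sets $\Cc$, $\Ac$ and $\Pi_R$ must be interpreted with centers restricted to $Y''$: the surjectivity in \eqref{1} has to use only centers in $Y''$, and the Bolker check must consider only spheres with centers in $Y''$. Once that restriction is kept consistent, the two geometric facts above (the hemisphere inequality and the midpoint argument) carry the proof.
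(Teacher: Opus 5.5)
Your proof is correct and follows the paper's route. Condition \eqref{1} comes from the closed hemisphere $H_r(\vx)\subset Y''$, and condition \eqref{2} from the antipodal description of artifact points. The only difference is in how \eqref{2} is justified. The paper simply cites Theorem \ref{thm_ult}, which covers all centers: two antipodal points of a radius-$r$ sphere are $2r$ apart, so they cannot both lie in the open ball $D_0\supset D'$. Your midpoint argument instead uses $\norm{\vy}>r$, so it depends on restricting centers to $Y''$, and that restriction is legitimate for $R:\Ec'(D')\to\Dc'(Y'')$.
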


\begin{proof}  Let $\vx\in D'$. Let $f\in \Ec'(D')$. Because $H_r(\vx)\subset Y''$, by
\eqref{H_r}, $Rf(\vy)$ is known for all points in $H_r(\vx)$.

Let $\xi\in S^{n-1}$, then the line $\vx + \rr \xi$ intersects the
hemisphere $H_r(\vx)\subset Y''$ at one or two points. Then, \eqref{1}
of Definition \ref{stable_def} is satisfied and $\Pi_R:\Cc\to
\Omega\times \drn$ is surjective.

Condition \eqref{2} is proven in Theorem \ref{thm_ult}.
\end{proof}

%
%


\begin{remark}
Theorem \ref{lim_ult} shows that we can achieve a stable solution for
$R:\Ec'(D')\to \Dc'(Y'')$.  If the domain $D'$ is shrunk, then one
could need only a subset of $Y''$ for weak stability, and here is an
example.


Let \bel{def D}D= \{\vx\in \rn\st |\vx - ((r+d)/2,0,\ldots,0)| <
(r-d)/2\}. \ee Then $D\subset {D'}$. {An example cross-section of $D$ in $\rtwo$ is shown in figure \ref{fig3}, where the boundary of $D$ is highlighted as a red circle.} 

We do not need centers over all of $Y''$ to have weak stability on
$D$. We can largely use the $\vy$ on one side of $D$. This is because
many of the $S(\vy)$ with $\vy \in Y''$ do not intersect $D$, and this
effect is more pronounced as $d\to r$, where $d$ is effectively
controlling the size of the object supported in $D$. This has potential practical relevance, as one would not need to have emitters encircling the entire object for stable reconstruction if the object is small enough, e.g., in handheld URT this may be useful. 

Let $S_D$ be the set of $\vy \in Y''$ such that $S(\vy)$ also intersects $D$. An example $D$ and $S_D$ is shown in $\rtwo$ in figure \ref{fig4}, where $S_D$ is the yellow region and the boundary of $D$ is the white circle.
The arguments in Theorem \ref{lim_ult} show that data over $S_D$ is
enough for weak stability at each $\vx\in D$ since $D\subset D'$
{($h=0$ on $D'$ and Bolker holds since it holds for $R$).} Note that
$S_D$ is open because $D$ is open
\end{remark}

\begin{figure}[!h]
\centering \includegraphics[width=0.9\linewidth, height=5cm,
keepaspectratio]{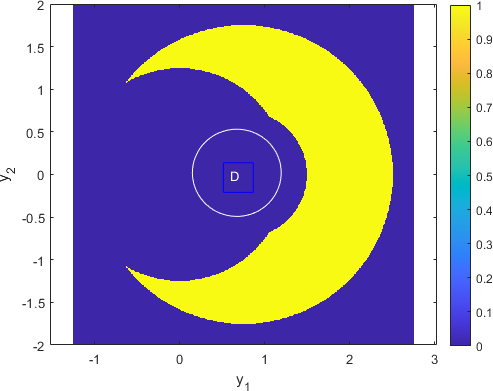} \caption{{In $\rtwo$, the
white circle is the boundary of $D$, and the set in yellow is $S_D$--the set of circle
centers $\vy \in Y'$ for which $S(\vy)\cap D \neq \emptyset$. Here $r=1.25$
and $d = 0.25$.}}\label{fig4}
\end{figure}

 We now aim to prove global (strong) stability on $D'$ given by
\eqref{def D' Y'} and $Y''$ given by \eqref{def Y''}.

\newcommand{\YDp}{Y_{D'}}

Let $\YDp = \bigcup_{\vx\in D'} H_r(\vx)$, then $\YDp$ is an annulus
and because
$D'$ is open, $\YDp$ is open.

 Let $h$ be a smooth nonnegative cutoff function which is supported on
$\sparen{\vy\in \rn\st r<\vy}$ and equal to $1$ on $\YDp$. This is
possible because $\YDp$ is bounded away from $\cl(D')$ as justified
just above \eqref{H_r}. Let $R^{\dagger} = hR$. Then, for $u\in \Ec'(D')$, $\supp(R^{\dagger}u)\subset
Y''$. The operator \[\mathcal{N} = R^*R^{\dagger}\] is a modified normal
operator that just uses data, $Ru$, on $Y''$. 


\begin{theorem}
\label{ell_thm_4}
Let $\mathcal{N} = R^*R^{\dagger}$. Then, $\mathcal{N} : H^s_c(D')\to
H^{s+1}_{\text{\rm loc}}(D')$  is an elliptic \psido order $-1$.
\end{theorem}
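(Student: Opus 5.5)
The plan is to follow the template of Theorem \ref{ell_thm_1}: first show that $\Nc$ is a \psido, then compute its principal symbol to show it is elliptic.

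\emph{Step 1: $\Nc$ is an FIO composition with diagonal canonical relation.} Multiplying by the smooth function $h(\vy)$ changes only the amplitude of $R$, not its phase. So $R^\dagger = hR$ is an FIO of order $-(n-1)/2$ with the same canonical relation $\Cc$ as $R$ (Proposition \ref{fio_prop}). Likewise $R^*$, the integral over $S(\vx)$, is an FIO with canonical relation $\Cc^t$. For $u\in\Ec'(D')$, both $R u$ and $R^\dagger u$ have compact support, since $\supp(Ru)\subset\{\norm{\vy}<2r\}$. Hence $R^*$ and $R^\dagger$ compose, and $\Nc:\Ec'(D')\to\Dc'(D')$.

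By Theorem \ref{thm_ult}, the Bolker condition holds above $D_0\supset D'$: the only artifact point of $\vx$ on $S(\vy)$ is the antipode, which lies outside $D_0$. Corollary \ref{cor:bolker} then gives $\Cc_{D'}^t\circ\Cc_{D'}\subset\Delta_{D'}$. Therefore $\Nc$ is a \psido on $D'$, and its order is $2\cdot(-(n-1)/2)=1-n$. Note that the statement says $-1$, which matches this only when $n=2$; I would flag this and prove the order $1-n$, which gives the mapping $H^s_c\to H^{s+n-1}_{\text{loc}}$.

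\emph{Step 2: ellipticity.} Fix $(\vx,\xi)\in \dot{T^*}(D')$. By Theorem \ref{thm:PiR-bound}, which applies since $\nabla r=0$, there are exactly two preimages $\lambda_\pm$ in $\Cc$. Their base points are $\vy_\pm=\vx\pm r\xi/\norm{\xi}$, the two points of $S(\vx)$ on the line $\vx+\rr\xi$. As in the symbol calculation of Theorem \ref{ell_thm_1}, the principal symbol of $\Nc$ at $(\vx,\xi)$ is a sum of two terms, one at each $\lambda_\pm$. Each term is a positive multiple (from the positive measures of $R$ and $R^*$) of $h(\vy_\pm)$, so the symbol is bounded below by a positive constant times $h(\vy_+)+h(\vy_-)$, homogeneous of degree $1-n$. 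It is positive provided at least one of $h(\vy_\pm)$ is $1$.

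The key geometric point is that the hemisphere $H_r(\vx)$ of \eqref{def:H_r} is closed and contains at least one of the two antipodal points $\vy_\pm$ of $S(\vx)$: whichever has $(\vy-\vx)\cdot\vx\ge 0$. Since $h=1$ on $\YDp\supset H_r(\vx)$, one term is positive. Uniformity of the lower bound on compact subsets of $D'$ follows from continuity and from homogeneity in $\xi$.

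\emph{Step 3: mapping property.} An elliptic properly supported \psido of order $m$ maps $H^s_c\to H^{s-m}_{\text{loc}}$. Composing with cutoffs handles the non-properly-supported part, which is smoothing.

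The main obstacle is Step 2's rigorous symbol bookkeeping: checking that the two contributions add with no cancellation. This is the same argument as \cite[pp.\ 337--338]{quinto}, which I would cite. The contributions are positive because all amplitudes are nonnegative and the two preimages are distinct points of $\Cc$. A secondary point is confirming that $h$ vanishes near $\cl(D')$, so that no sphere center inside $D'$ contributes, and that $h$ is smooth. Both follow from \eqref{H_r}.
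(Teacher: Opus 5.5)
Your proposal is correct and takes essentially the paper's route. Multiplying by $h$ leaves the phase of $R$ unchanged, the Bolker condition holds because the antipodal artifact point lies outside $D_0$, and ellipticity follows from the two preimages at $\vy_{1,2}=\vx\mp r\xi/\norm{\xi}$, where the closed hemisphere $H_r(\vx)$ forces $h=1$ at one of them, so the sum of nonnegative contributions is positive. Your flag on the order is also right: $R$ has order $-(n-1)/2$ by Proposition \ref{fio_prop}, so $\Nc$ has order $1-n$ and maps $H^s_c(D')\to H^{s+n-1}_{\text{\rm loc}}(D')$, and the paper's ``$-1$'' (which takes $R^\dagger$ and $R^*$ to be of order $-1/2$) holds only for $n=2$.
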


{Note that we are not saying that $\Nc$ is continuous. The proof
of Theorem \ref{corr_stab_4} will include a more restrictive
continuity result (see \eqref{Sobolev cont}).}

\begin{proof}
Similarly to the proof of Theorem \ref{ell_thm_1}, it is clear that
$hR : H^s_c(D')\to H^{s+1/2}_{c}(Y'')$ is an FIO with the
same phase and canonical relation as $R$ and that $R^{\dagger}$
satisfies the Bolker condition. It follows that the canonical relation
of $\mathcal{N}$ is a subset of the diagonal by Theorem \ref{lim_ult},
and thus $\mathcal{N}$ is a \psido. The order of $\mathcal{N}$ is $-1$
since the order of both $R^{\dagger}$ and $R^*$ is $-1/2$. 

By Theorem
\ref{lim_ult}, the right projection of $R^{\dagger}$ is surjective.  This
follows from the fact that for every $(\vx,\xi)\in D'\times \drn$,
there is a $\vy\in Y''$ such that $(\vx,\xi)$ is normal to $S(\vy)$. 

To prove that $\Nc$ is elliptic, we will go through the calculation of
its symbol. Let $(\vx,\xi)\in D'\times \drn$. Then the two points
$\vy_1 = \vx-r\xi/\norm{\xi}$ and $\vy_2 = \vx+r\xi/\norm{\xi}$ are
both in $Y_0$ and $(\vx,\xi)$ is normal to $S(\vy_j)$ for $j=1,2$.
Since $H_r(\vx)$ is a closed hemisphere, at least one of $\vy_j$ is in
$H_r(\vx)$. Without loss of generality, assume $\vy_1\in H_r(\vx)$.
Therefore, $h(\vy_1)=1$ and $h(\vy_2)\geq 0$. Let $\eta_j\in \drn$ be
chosen so the two preimages of $(\vx,\xi)$ in $\Cc$ are
\[\lambda_j=(\vy_j,\eta_j, \vx,\xi),\ \  j=1,2.\]
 The symbol of  $R^*R^{\dagger}$ is the sum of two
terms calculated using  these two preimages.

For each $\lambda_j$, one multiplies the weights for $R$ and $R^*$
evaluated at $(\vy_j, \vx)$ by $h(\vy_j)$. Since the weights on $R$
and $R^*$ are positive and $h(\vy_1)=1$, this product for $\lambda_1$ is positive.
Then, one multiplies by a positive Jacobian factor given in
\cite[equation (15) and proof of Theorem 2.1]{quinto} or Lemma 5.1 of
\cite{GKQR:ma}. This shows that the contribution to the symbol of
$\Nc$ at $\lambda_1$ is positive. The calculation for the contribution
at $\lambda_2$ is similar, and since $h(\vy_2)\geq 0$, this
contribution is nonnegative. Finally, one adds these two terms up.
Since the sum is positive, the symbol of $\Nc$ is positive, and $\Nc$
is elliptic. 
\end{proof}

We now prove a strong Sobolev inverse continuity result, showing that
inversion from $R$ is stable in a Sobolev sense.

\begin{corollary}
\label{corr_stab_4} 
Let $B,\ B_1$, and $B_2$ be compact
subsets of $D'$ such that $B\subset \intt(B_1)$ and $B_1\subset
\intt(B_2)$. Let $\phi_1$ and $\phi_2$ be in $\Dc(D')$ with $\phi_1=1$
on $B,\ \supp(\phi_1)\subset B_1$ and $\phi_2=1$ on $B_1,\
\supp(\phi_2)\subset B_2$. Let $\Nc' = \phi_2\Nc\phi_1$. For each
$s\in \rr$, there is a constant $C >0$ such that for each $f\in
H^s(B)$,
\begin{equation}
\left\|f\right\|_{H^s(B)} \leq
C\left\|\mathcal{N'}f\right\|_{H^{s+1}(B_2)}.
\end{equation} 
\end{corollary}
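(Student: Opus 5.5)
The plan follows the template of Corollaries \ref{corr_stab_1} and \ref{cor:Sobolev estimate}: a parametrix of the elliptic operator $\Nc$ gives a Sobolev estimate modulo a compact error, and Lemma \ref{stef} removes that error once $\Nc'$ is shown to be injective on $H^s(B)$.

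\emph{Step 1 (parametrix).} By Theorem \ref{ell_thm_4}, $\Nc=R^*R^\dagger$ is an elliptic \psido of order $-1$ on $D'$. Hence there is a properly supported parametrix $\Pc$ of order $1$ with $\Pc\Nc f=f+Kf$ for $f\in\Ec'(D')$, where $K$ is smoothing. For $f$ supported in $B$ we have $\phi_1f=f$. Pseudolocality then gives
\[
\Pc'\Nc'f=f+K'f,\qquad \Pc'=\phi_2\Pc,\qquad K'=\phi_2K\phi_1+\phi_2\Pc(\phi_2-1)\Nc\phi_1 .
\]
The second term of $K'$ is smoothing on distributions supported in $B$, because $\phi_2-1$ vanishes on $B_1\supset$ a neighborhood of $B$.

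\emph{Step 2 (continuity and the a priori estimate).} We need $\Nc':H^s(B)\to H^{s+1}(B_2)$ continuous; this is the restricted continuity result (\eqref{Sobolev cont}) promised after Theorem \ref{ell_thm_4}. It holds because $\Nc'$ is a compactly supported (hence properly supported) \psido of order $-1$, obtained by cutting off on both sides. Likewise $\Pc':H^{s+1}(B_2)\to H^s(B_2)$ is continuous. Finally, $K':H^s(B)\to H^s(B_2)$ is compact, being smoothing with compactly supported kernel, so Rellich applies. The triangle inequality gives
\[
\|f\|_{H^s(B)}\le C\paren{\|\Nc'f\|_{H^{s+1}(B_2)}+\|K'f\|_{H^s(B_2)}}.
\]
Since $\Nc'$ is continuous, it is closed.

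\emph{Step 3 (injectivity; main obstacle).} Suppose $f\in H^s(B)$ and $\Nc'f=0$. Then $\Nc f=0$ on $B_1$, which contains $\supp f$. Pairing with $f$ gives
\[
0=\ip{\Nc f,f}=\ip{hRf,Rf}=\|\sqrt h\,Rf\|^2_{L^2}.
\]
This step needs justification for general $s$: first use ellipticity of $\Nc$, since $\Nc f=0$ near $\supp f$ forces $f$ to be smooth there, hence $f\in\Dc(D')$, so the pairing is legitimate in $L^2$. Consequently $Rf=0$ wherever $h>0$, in particular on $\YDp\supset Y'$. Since $f\in L^2_c(D')$, Theorem \ref{inv_ult} yields $f=0$.

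The delicate points are the regularity bootstrap, that $\Nc f=0$ on a neighborhood of $\supp f$ forces smoothness of $f$, and checking that $Y'\subset\{h=1\}$ so that Theorem \ref{inv_ult} applies. I would verify the latter from the definition of $\YDp$ as the union of the hemispheres $H_r(\vx)$. If $Y'\subset\{h=1\}$ fails, I would instead use that $Rf$ vanishes on the open set $\{h>0\}$, together with the support of $h$, and apply the injectivity argument of Theorem \ref{inv_ult} on the corresponding range of $s$. With injectivity established, Lemma \ref{stef} applied to $A=\Nc'$ and the compact operator $K'$ removes the compact term and gives the stated estimate.
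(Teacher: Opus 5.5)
Your proposal is correct and is essentially the paper's proof: the same parametrix identity $\Pc'\Nc'f=f+K'f$ with compact remainder, Lemma \ref{stef}, and injectivity of $\Nc'$ via the argument of Corollary \ref{corr_stab_1} ($\ip{\Nc f,f}$ proportional to $\|\sqrt{h}\,Rf\|^2$, elliptic regularity, then Theorem \ref{inv_ult}); proving smoothness first and then pairing in $L^2$ is slightly cleaner than the paper's $H^s$ pairing. The inclusion you flag does hold, since $Y_{D'}=\{\vy : \sqrt{d^2+r^2}<|\vy|<2r\}$ and $d+r>\sqrt{d^2+r^2}$, so $Y'\subset Y_{D'}\subset\{h=1\}$ and your fallback is unnecessary.
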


\begin{proof}
By Theorem \ref{ell_thm_4}, there exists an order $1$ parametrix,
$\Pc$, of $\mathcal{N}$.  Without loss of generality, one may assume
$\Pc$ is properly supported.
Then, for
$u\in \Ec'(D')$ one can write \bel{parametrix1-4.3}\Pc\Nc u = I u+ K
u\ee for some smoothing operator $K$.

Now let  $u$ be supported in $B$.   By
pseudolocality of \psido, one can write \eqref{parametrix1-4.3} as
\begin{equation}
\label{parametrix2-4.3}
\begin{split}
\Pc'\Nc' u &=
u + K'u,\ \ \text{where}\\
\Pc' = \paren{\phi_2\Pc }, \ \
\Nc'&= \paren{\phi_2\Nc\phi_1},\ \ K' =
\phi_2K\phi_1+\phi_2\Pc(\phi_2-1)\Nc\phi_1,
\end{split}
\end{equation}
and  $K'$ is smoothing for distributions supported on $B$ since
$(\phi_2-1)$ is zero on $B$. 

Since $\Pc'$ and $\Nc'$ are properly supported \psido, for each $s\in
\rr$, \bel{Sobolev cont} \Pc':H^s(B)\to H^{s-1}(B_2),\ \
\Nc':H^s(B)\to H^{s+1}(B_2), \ \ K':H^s(B)\to H^{s}(B_2)\ee are
continuous and $K'$ is a compact operator since it is smoothing on
$B$.

Now, we have
\begin{equation}
\begin{split}
\|u\|_{H^s(B)}&\leq \|\mathcal{P}'\mathcal{N}'u\|_{H^{s}(B_2)} +
\|K'u\|_{H^s(B_2)} \\
&\leq C\paren{\|\mathcal{N}'u\|_{H^{s+1}(B_2)} + \|K'u\|_{H^s(B_2)}}. 
\end{split}
\end{equation}
The rest of the proof follows the same arguments as the
proof of Corollary \ref{corr_stab_1}.\end{proof}


\noindent In the next section, we will present simulated image
reconstructions including ones using the limited data and the scanning
region shown in figure \ref{fig4}.

%% file: images_new.tex
\section{Image reconstructions}
\label{images}

In this section, we present image reconstructions
based on the scanning geometries discussed in section \ref{examples}.
We use algebraic methods for reconstruction. Let $A$ be the
discretized form of $R$, let $\vx$ be the vectorized image 
of the object to be reconstructed, and let $b$ be the data. Then, to
recover $\vx$, we aim to solve
\begin{equation}
\label{solve}
\argmin_{\vx \in\mathcal{X}} \|A\vx - b\|^2 + \lambda G(\vx),
\end{equation}
where $G$ represents a regularization penalty (e.g., total variation)
and $\mathcal{X}$ is the space of solutions (e.g., non-negative
functions). We mainly consider two methods for solving \eqref{solve},
namely Landweber \cite{landweber1951iteration} (implemented using the
code of \cite{AIRtools}), and a gradient decent method which uses
Total Variation (TV) regularization and non-negativity constraints,
specifically the code provided in \cite{ehrhardt2014joint}. In this
case, $\mathcal{X}$ is the space of vectors $\vx$ with
non-negative components. Then, $G(\vx) = \sqrt{\|\nabla\vx\|_2^2 +
\beta^2}$ is a modified TV norm, where $\beta$ is an additional
smoothing parameter that is introduced so that the gradient of $G$ is
defined at zero.

We generate noiseless data via $b = A\vx$, where $\vx$ is the ground truth. To avoid inverse crime, we use a finer pixel grid when generating $b$ than in the reconstructed image. Specifically, in this section, the image reconstructions have resolution $100\times 100$, but $b$ is generated using $105\times 105$ images. We add noise via
\begin{equation}
b_\epsilon = b + \gamma \times \frac{\|b\|_2}{\sqrt{k}}\eta,
\end{equation}
where $k$ is the length of $b$, $\eta \sim \mathcal{N}(0,1)$ is a vector of draws from standard normal distribution, and $\gamma$ controls the noise level. The images are reconstructed from $b_\epsilon$, and we measure least squares error using $\delta = \|\vx_\epsilon -\vx\|_2/\|\vx\|_2$, where $\vx$ is the ground truth, and $\vx_\epsilon$ is the reconstruction.

\subsection{Linear translation CST images}\label{sec:linear
translation} In this section, we present reconstructions in the linear
translation CST geometry analyzed in section \ref{sec:linearCST}. We
consider the half annulus phantom shown in figure \ref{F1}. We also
show the data sinogram and the sinogram multiplied by a smooth cutoff
$h$ as explained in section \ref{sec:linearCST}. We set $\alpha = 1$.
Note, the phantom is supported on $\{x_2 >0\}$ in line with the
setup
of section \ref{sec:linearCST}. 
\begin{figure}
\centering
\begin{subfigure}{0.24\textwidth}
\includegraphics[width=0.9\linewidth, height=3.2cm, keepaspectratio]{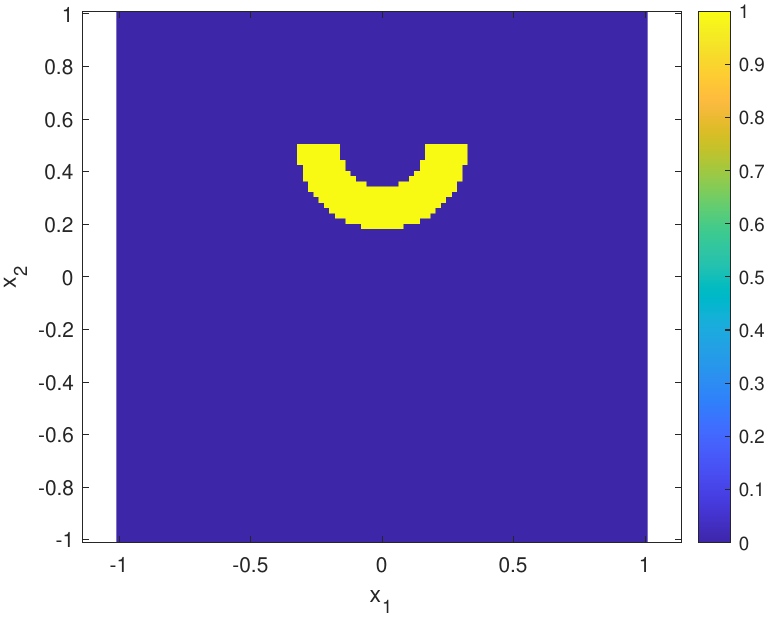}
\subcaption*{ground truth}
\end{subfigure}
\begin{subfigure}{0.24\textwidth}
\includegraphics[width=0.9\linewidth, height=3.2cm, keepaspectratio]{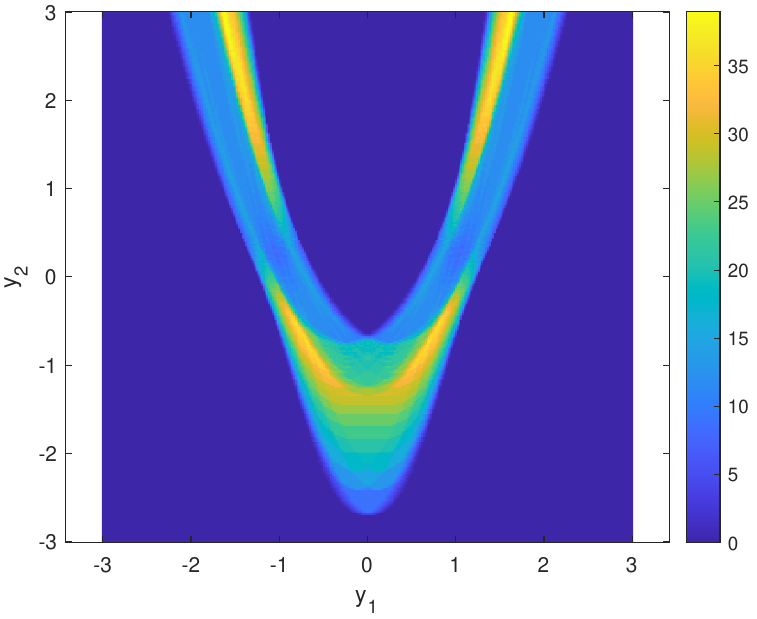}
\subcaption*{sinogram}
\end{subfigure}
\begin{subfigure}{0.24\textwidth}
\includegraphics[width=0.9\linewidth, height=3.2cm, keepaspectratio]{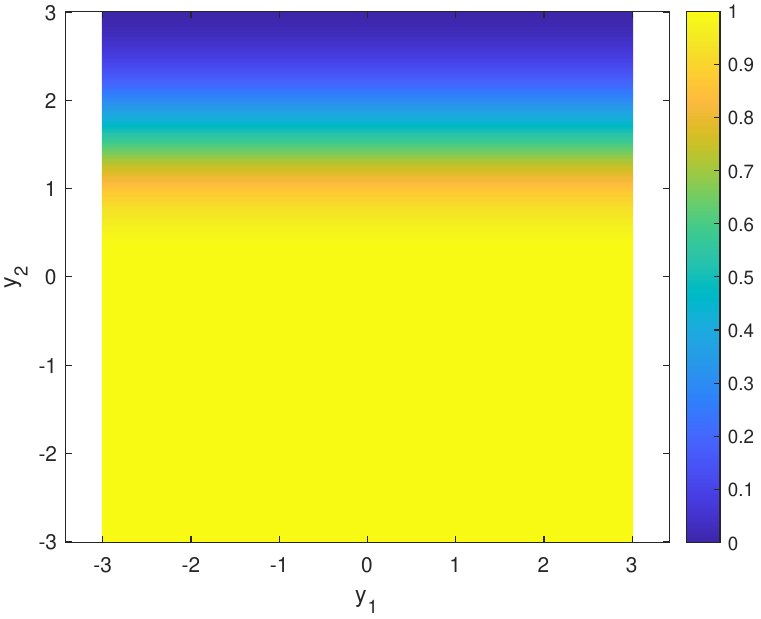}
\subcaption*{$h$}
\end{subfigure}
\begin{subfigure}{0.24\textwidth}
\includegraphics[width=0.9\linewidth, height=3.2cm, keepaspectratio]{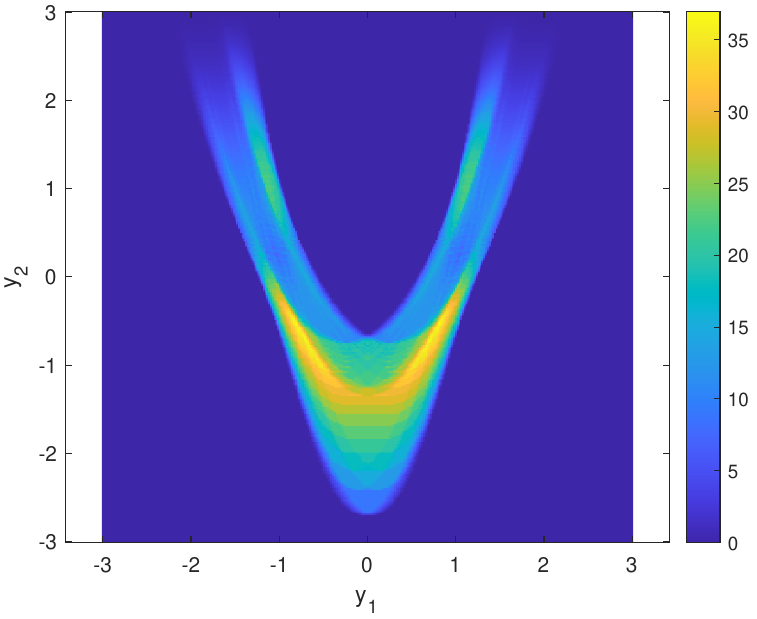}
\subcaption*{filtered sinogram}
\end{subfigure}
\caption{Ground truth half annulus phantom and sinogram $A\vx$
for the linear translation CST of section \ref{sec:linear translation}
(on the left). The smooth cutoff $h$ described in section \ref{sec:linearCST}
and the filtered sinogram $hA\vx$ (on the right). } \label{F1}
\end{figure}


We will compare reconstructions from the standard and smoothed sinograms. Here, in addition to Landweber and TV reconstructions, we also present Filtered Back-Projection (FBP) images to show the effects of filtering in the recovery of the image edges. We implement FBP via $\vx_\epsilon =  A^T\partial_{y_2}b$, where $\partial_{y_2}$ indicates the partial derivative of $b$ in the $y_2$ direction. This filter is a pseudodifferential operator and thus fits our theory by the H\"{o}rmander-Sato Lemma (Theorem \ref{thm:HS}), in the sense that this will not cause artifacts in the image. When reconstructing using filtered data $hb$, we implement FBP using $\vx_\epsilon =  \tilde{A}^T\partial_{y_2}(hb)$, where $\tilde{A} = hA$.
\begin{figure}
\centering
\begin{subfigure}{0.24\textwidth}
\includegraphics[width=0.9\linewidth, height=3.2cm, keepaspectratio]{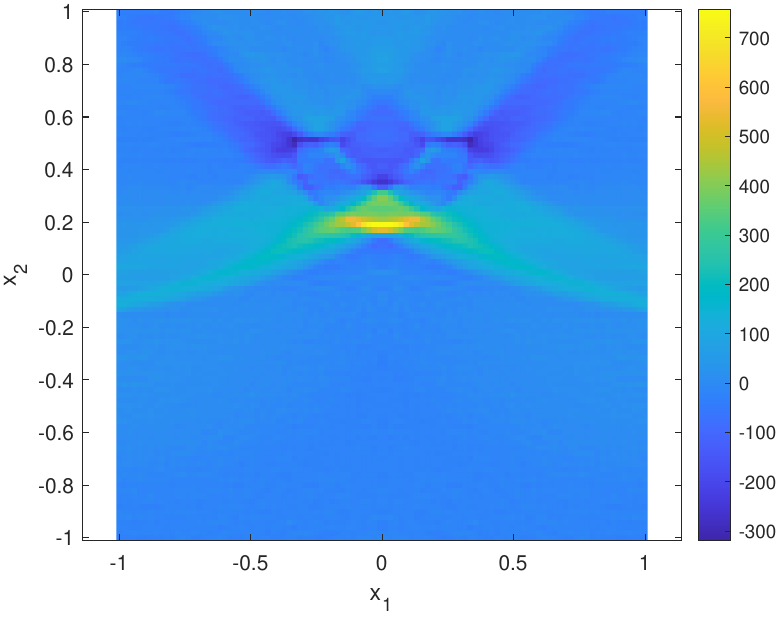}
\end{subfigure}
\begin{subfigure}{0.24\textwidth}
\includegraphics[width=0.9\linewidth, height=3.2cm, keepaspectratio]{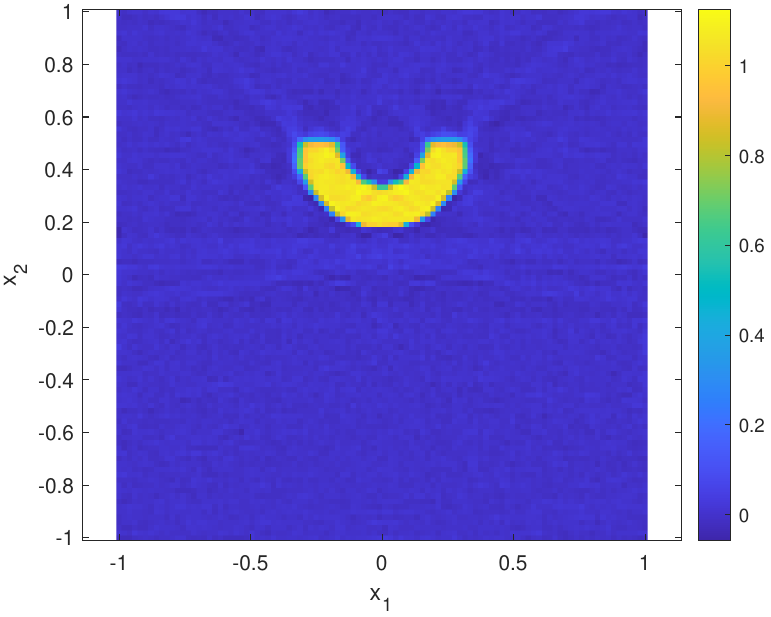}
\end{subfigure}
\begin{subfigure}{0.24\textwidth}
\includegraphics[width=0.9\linewidth, height=3.2cm, keepaspectratio]{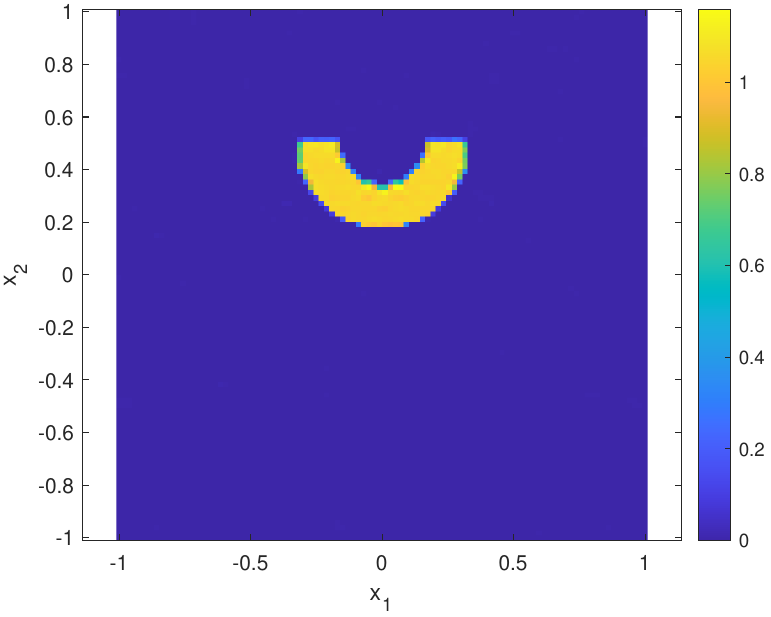}
\end{subfigure}
\\
\begin{subfigure}{0.24\textwidth}
\includegraphics[width=0.9\linewidth, height=3.2cm, keepaspectratio]{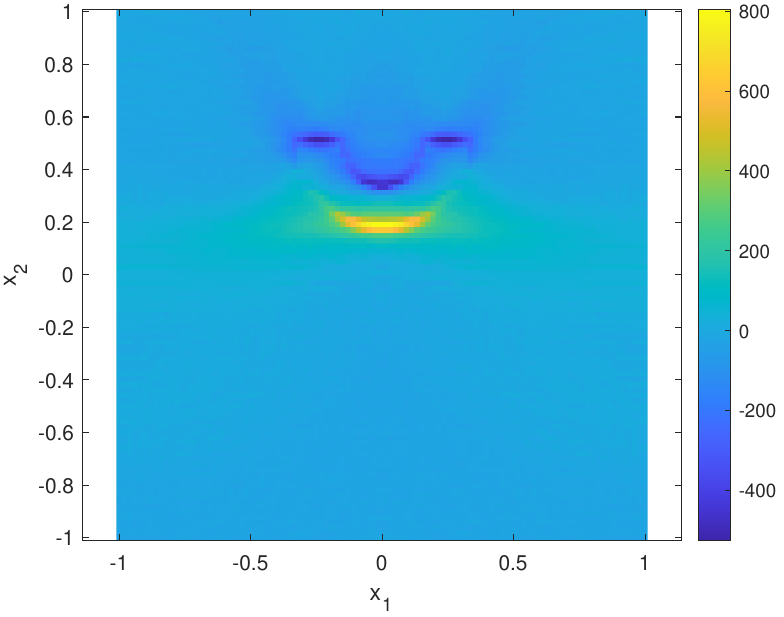}
\subcaption*{FBP}
\end{subfigure}
\begin{subfigure}{0.24\textwidth}
\includegraphics[width=0.9\linewidth, height=3.2cm, keepaspectratio]{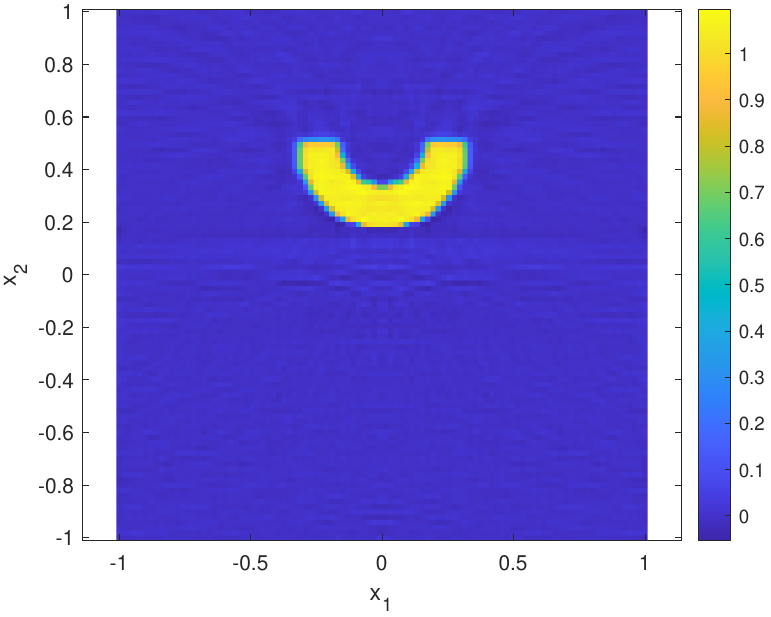}
\subcaption*{Landweber}
\end{subfigure}
\begin{subfigure}{0.24\textwidth}
\includegraphics[width=0.9\linewidth, height=3.2cm, keepaspectratio]{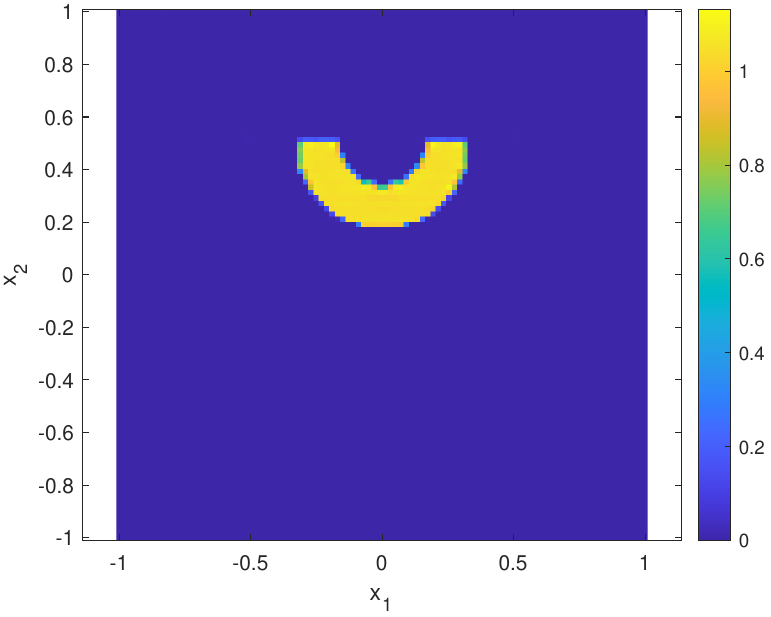}
\subcaption*{TV}
\end{subfigure}
\caption{Image phantom reconstructions using the unfiltered (top row)
and filtered (bottom row) sinograms for the linear translation CST geometry of
section \ref{sec:linear translation}. The noise level used for the FBP images in zero and set at $\gamma = 0.05$ for the Landweber and TV reconstructions.}
\label{F2}
\end{figure}
In the FBP reconstruction using the standard sinogram, there are streaking artifacts. These mainly occur along circles which correspond to $\vy$ near the sinogram edge, i.e., near $\{y_2 = 3\}$ in figure \ref{F1}. This is due to the sharp cropping of the sinogram along $\{y_2 = 3\}$. The artifacts are significantly reduced in the filtered sinogram reconstruction and the original edge map is more clearly represented. We see a similar effect in the Landweber reconstructions, although the least squares errors are comparable ($\delta = 0.21$ unfiltered, $\delta = 0.20$ filtered). The artifacts and background noise are suppressed further in the TV reconstructions ($\delta = 0.15$ in the unfiltered and filtered images), although these require more machinery to implement (e.g., gradient methods and non-negativity) when compared to Landweber.

\subsection{Rotational CST reconstructions}\label{sec:rotation}
Here, we present reconstructions based on the scanning geometry of section \ref{sec:circularCST}. We recover a half annulus phantom as previously. The phantom, its sinogram, and image reconstruction using Landweber and TV are shown in figure \ref{F3}. The annulus is slightly off center to avoid and rotation invariance.
\begin{figure}
\centering
\begin{subfigure}{0.24\textwidth}
\includegraphics[width=0.9\linewidth, height=3.2cm, keepaspectratio]{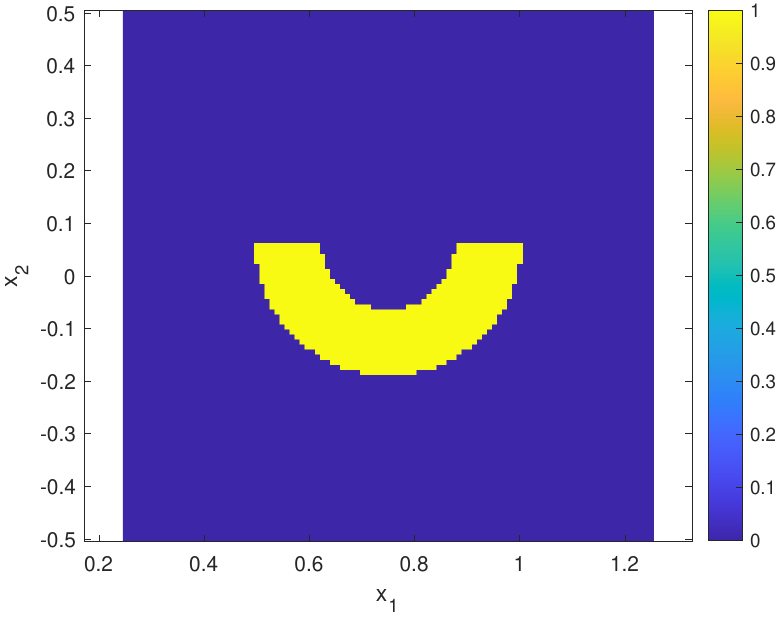}
\subcaption*{ground truth}
\end{subfigure}
\begin{subfigure}{0.24\textwidth}
\includegraphics[width=0.9\linewidth, height=3.2cm, keepaspectratio]{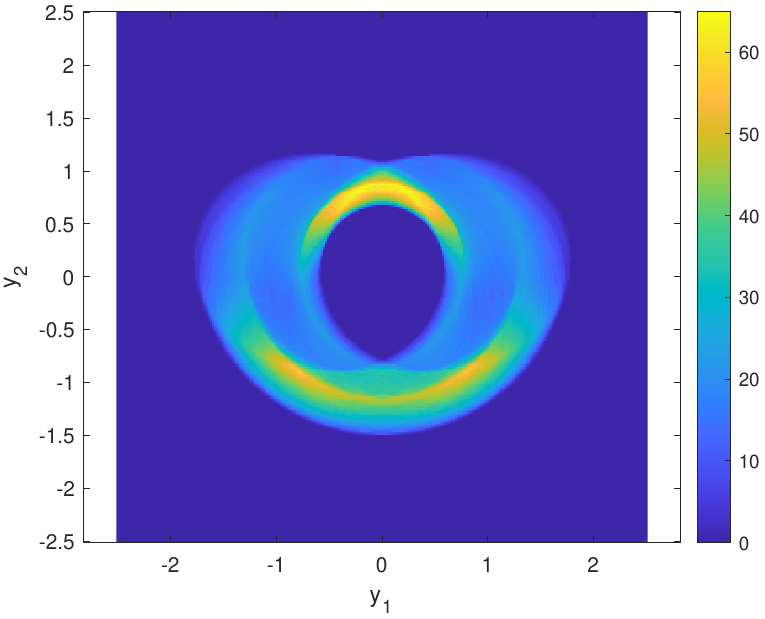}
\subcaption*{sinogram}
\end{subfigure}
\begin{subfigure}{0.24\textwidth}
\includegraphics[width=0.9\linewidth, height=3.2cm, keepaspectratio]{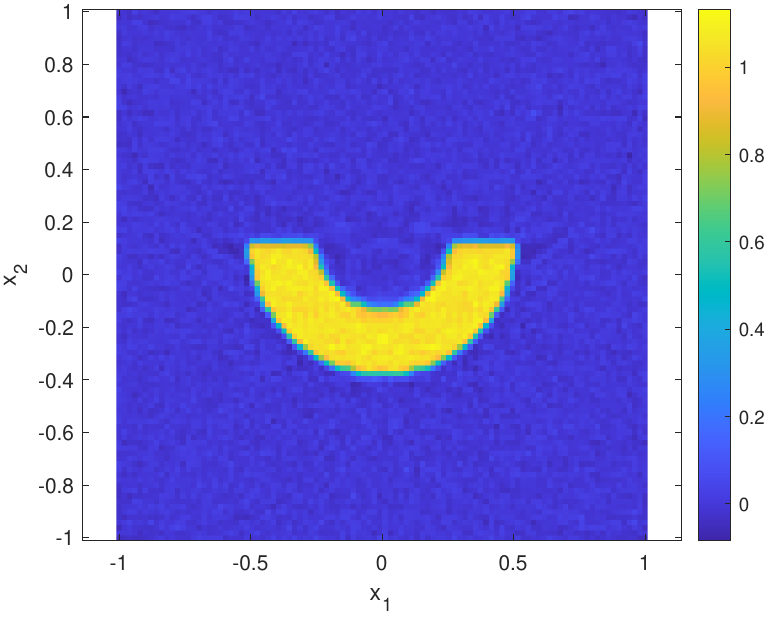}
\subcaption*{Landweber}
\end{subfigure}
\begin{subfigure}{0.24\textwidth}
\includegraphics[width=0.9\linewidth, height=3.2cm, keepaspectratio]{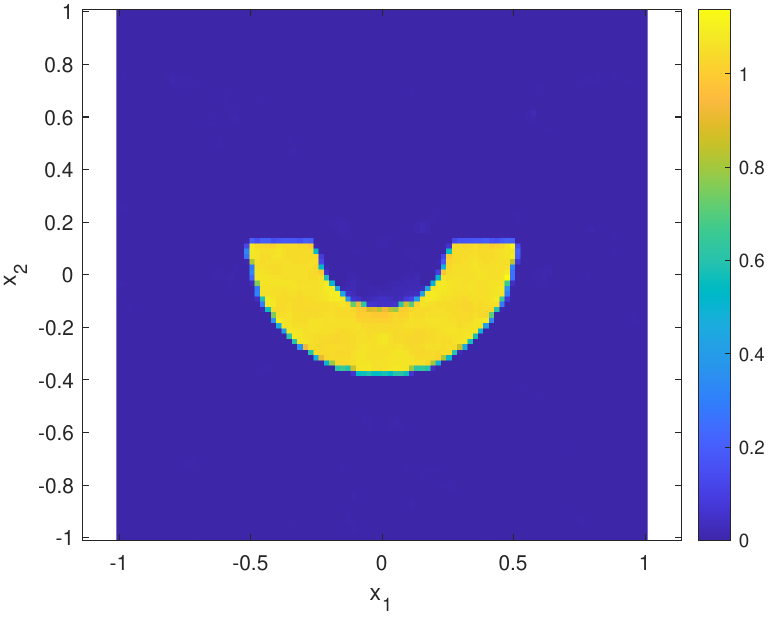}
\subcaption*{TV ($\delta = 0.16$)}
\end{subfigure}
\caption{{Ground truth annulus phantom and  sinogram for the rotational CST geometry of section
\ref{sec:rotation} (on the left). Reconstructions using the Landweber and TV
methods are shown in the right two figures. The noise level added to
the data was $\gamma = 0.05$.} }\label{F3}
\end{figure}

In contrast to the previous section, we do not see streaking artifacts in the Landweber reconstruction. This is because, in this geometry, the sinogram is compactly supported and thus cropping or smoothing is not necessary. There is background noise in the Landweber reconstruction ($\delta = 0.19$), which is to be expected given the significant level of noise added to the data ($\gamma = 0.05$). As in the previous example, the TV reconstruction with non-negativity constraints is most optimal ($\delta = 0.16$), and mitigates much of the background noise in the image.

\begin{remark}
Both CST geometries studied here (linear and rotational) offer a
stable solution by corollaries \ref{corr_stab_1} and \ref{corr_inv_pal}, although the linear translation system requires additional sinogram smoothing to achieve global estimates on Sobolev scale, and thus perhaps the rotation geometry we propose could be considered most optimal for CST imaging from a theoretical perspective. In addition to stability, practical application should also be factored in, e.g., ease of design, expected noise level ($\gamma$), and scan time etc. Such practical considerations are outside the scope of this paper, however, and will be the subject of future work.
\end{remark}

\subsection{The constant \texorpdfstring{$r$}{r} case}\label{sec:const
r} In this
example, we present image reconstructions based on the geometry of
section \ref{sec:URT} in the case when $n=2$. In particular, we
reconstruct an example $f$ {using sphere centers in $Y''$.} We
consider a half annulus phantom for reconstruction as in the previous
examples. See figure \ref{F4}.
\begin{figure}
\centering
\begin{subfigure}{0.24\textwidth}
\includegraphics[width=0.9\linewidth, height=3.2cm, keepaspectratio]{GT_cst_3}
\subcaption*{ground truth}
\end{subfigure}
\begin{subfigure}{0.24\textwidth}
\includegraphics[width=0.9\linewidth, height=3.2cm, keepaspectratio]{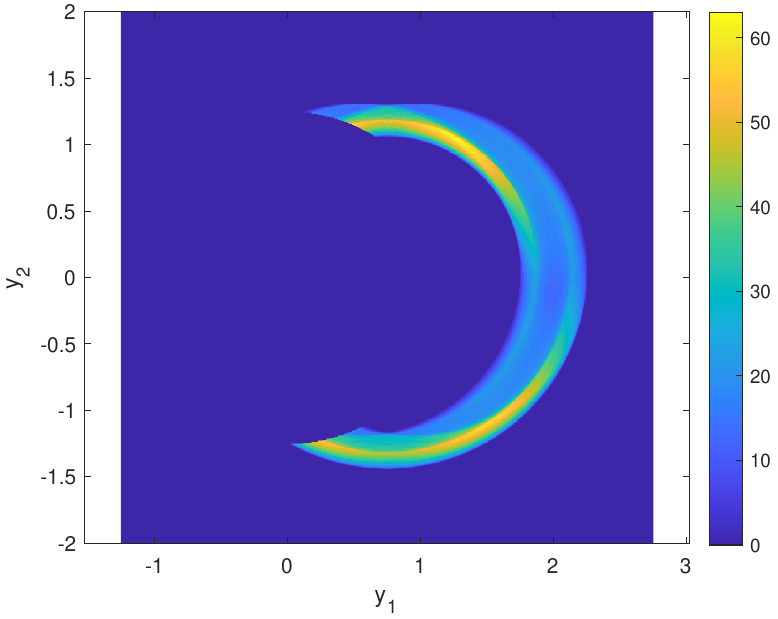}
\subcaption*{sinogram}
\end{subfigure}
\begin{subfigure}{0.24\textwidth}
\includegraphics[width=0.9\linewidth, height=3.2cm, keepaspectratio]{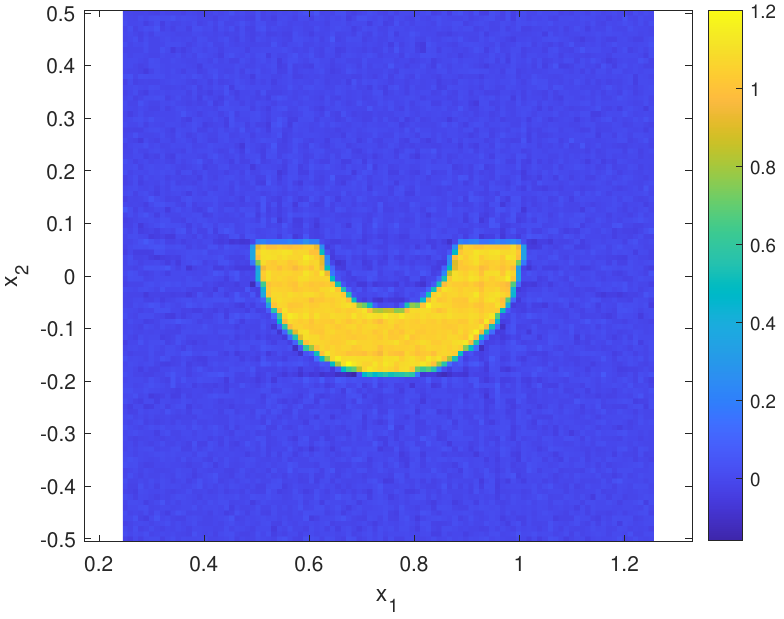}
\subcaption*{Landweber ($\delta = 0.19$)}
\end{subfigure}
\begin{subfigure}{0.24\textwidth}
\includegraphics[width=0.9\linewidth, height=3.2cm, keepaspectratio]{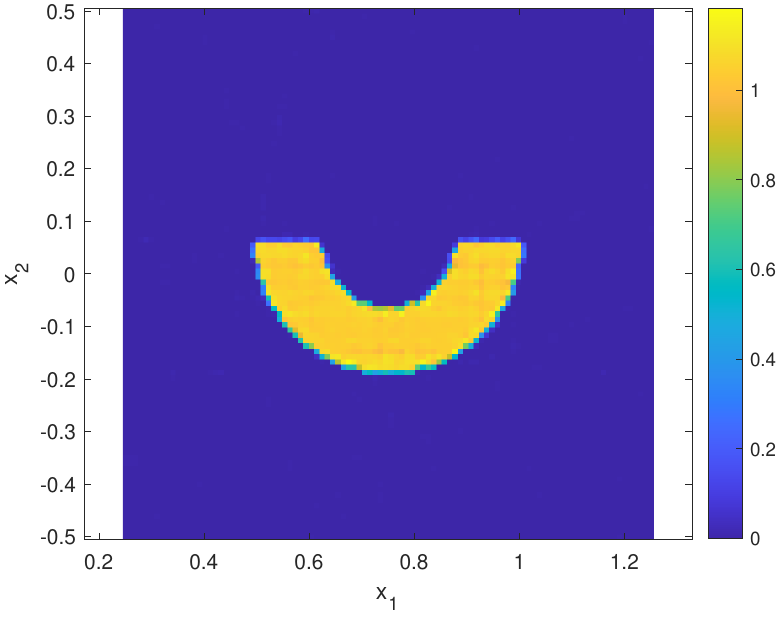}
\subcaption*{TV ($\delta = 0.17$)}
\end{subfigure}
\caption{{Ground truth half annulus phantom and sinogram for the
URT geometry of section \ref{sec:const r} (on the left). Data are
taken for $\vy\in Y''$, i.e., the $Rf$ values are set to zero in
$D_0$. Reconstructions are presented in the right two figures using the Landweber method and TV. Note that the reconstruction region is
$[0.25,1.25]\times[-0.5,0.5]$, $r = 1.25$ and $d = 0.25$. The noise
level added to the data was $\gamma = 0.05$. The least squares error
values ($\delta$) corresponding to the reconstructions are given in }}
\label{F4}
\end{figure}


The Landweber reconstruction is artifact free (i.e., no significant streaking) and the noise in the data appears only slightly amplified in the reconstruction, which is as expected as the inverse problem is only mildly ill-posed (Corollary \ref{corr_stab_4}). 
\begin{figure}
\centering
\begin{subfigure}{0.24\textwidth}
\includegraphics[width=0.9\linewidth, height=3.2cm, keepaspectratio]{sino_urt_2}
\end{subfigure}
\begin{subfigure}{0.24\textwidth}
\includegraphics[width=0.9\linewidth, height=3.2cm, keepaspectratio]{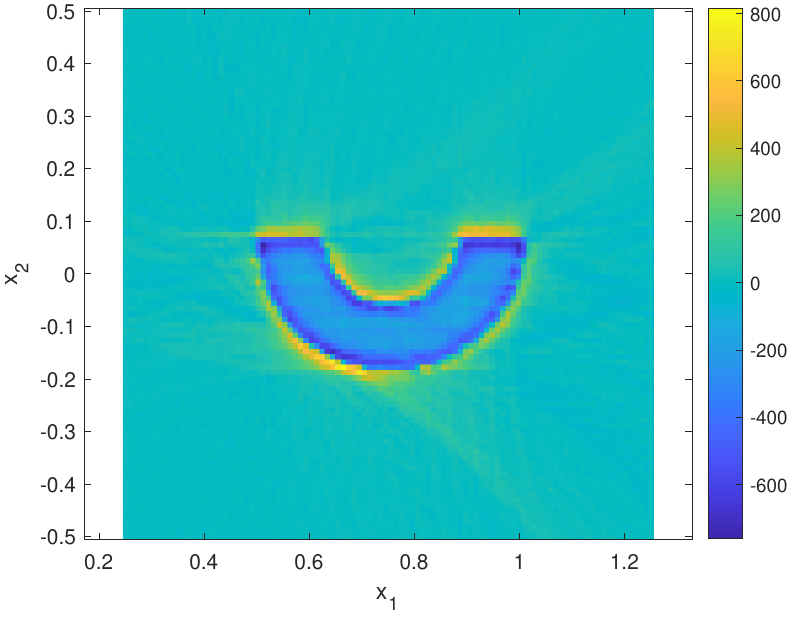}
\end{subfigure}
\begin{subfigure}{0.24\textwidth}
\includegraphics[width=0.9\linewidth, height=3.2cm, keepaspectratio]{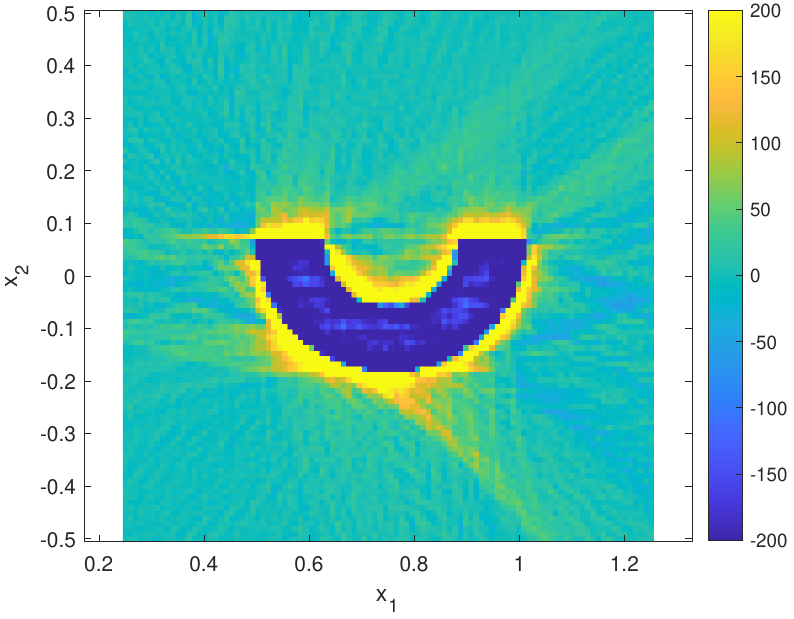} \label{F5c}
\end{subfigure}
\\
\begin{subfigure}{0.24\textwidth}
\includegraphics[width=0.9\linewidth, height=3.2cm, keepaspectratio]{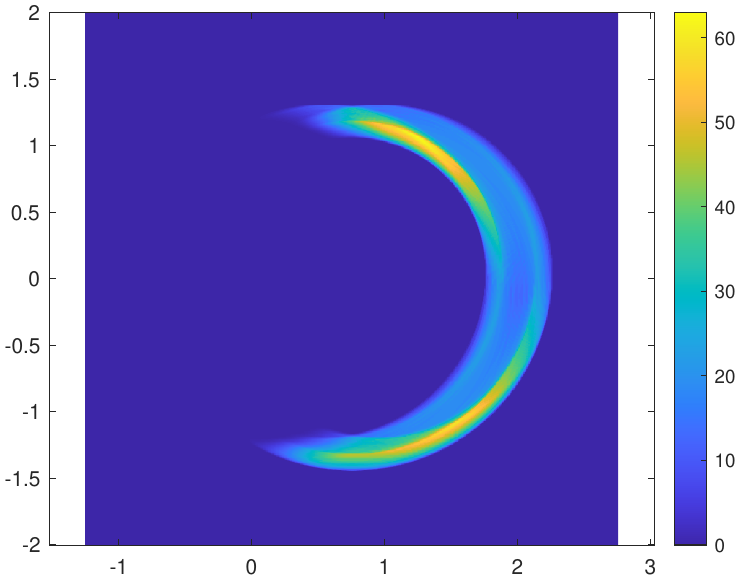}
\subcaption*{sinogram}
\end{subfigure}
\begin{subfigure}{0.24\textwidth}
\includegraphics[width=0.9\linewidth, height=3.2cm, keepaspectratio]{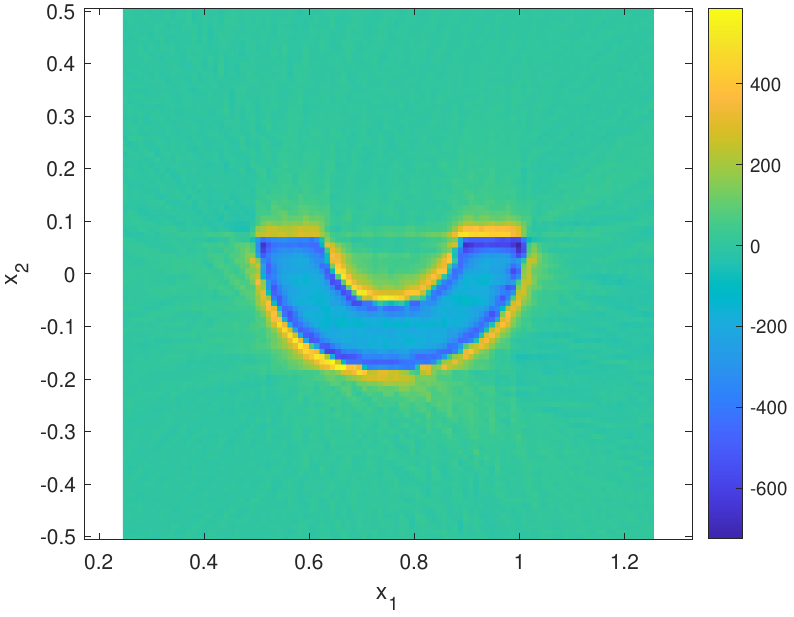}
\subcaption*{FBP}
\end{subfigure}
\begin{subfigure}{0.24\textwidth}
\includegraphics[width=0.9\linewidth, height=3.2cm, keepaspectratio]{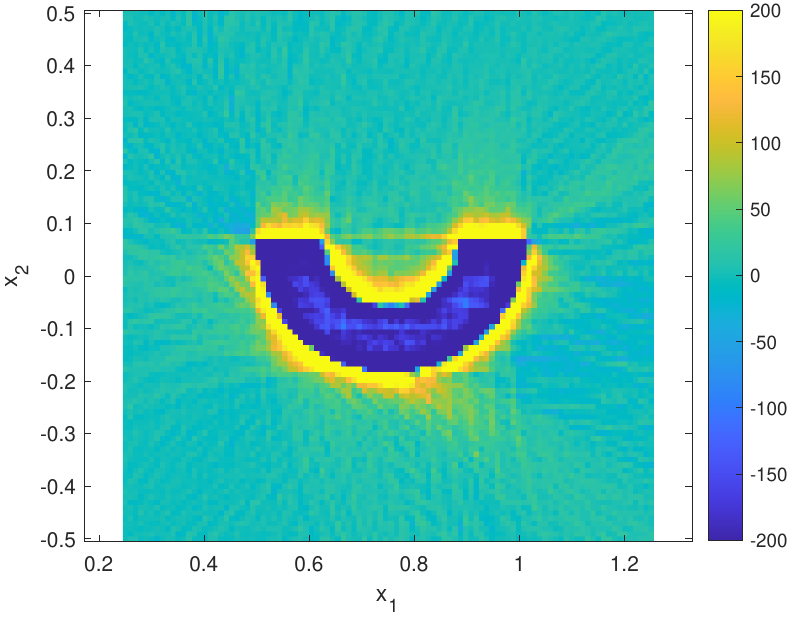} \label{F5f}
\subcaption*{FBP (limited colorbar)}
\end{subfigure}
\caption{FBP reconstructions of the half annulus phantom for the URT
geometry of section \ref{sec:const r}.  The top row has  a sharply
cropped sinogram,  and the bottom row has a  smoothly cropped sinogram using the cutoff $h$
as in Theorem \ref{ell_thm_4}. In the reconstructions in the
right-hand column we limit the colorbar so as to highlight the
artifacts better. The reconstructions with the original colorbars
(showing the full range of values) are in the middle column.}
\label{F5}
\end{figure}
We also do not detect the presence of edge of sinogram artifacts due
to the cropping of the sinogram space, and thus the Landweber and TV
methods appear to be sufficiently suppressing these artifacts in this
example. For comparison, we also provide FBP reconstructions in
figure \ref{F5}. Here, we reconstruct using {$R^*\Delta$}, where $\Delta$ is the Laplacian. We see in the reconstruction using the limited colorbar in the top-right of figure \ref{F5} that there are significant streaking artifacts due to the sharp cropping of the sinogram. These artifacts are suppressed in the bottom-right of figure \ref{F5} after we apply a smooth cutoff as in Theorem \ref{ell_thm_4}.


As in the previous two examples which were related to CST, the TV reconstruction here is most optimal and combats well the noise seen in the Landweber image. The parameters $r= 1.25$ and $d=0.25$ used here are the same as in figure \ref{fig4}. Thus, we only need the $\vy$ in the set {($S_D$)} shown in figure \ref{fig4} for stable reconstruction. This is of note as we only require circle centers $\vy$ largely on one side of the scanning target for stable reconstruction. 

%% file: conclusion.tex
\section{Conclusion} In this paper, we presented novel microlocal and
stability analyses of a spherical Radon transform, $R$, which
integrates $f$ over spheres with radii $r(\vy)$ which vary smoothly
with the sphere center, $\vy$. In particular, in Theorem \ref{inj_thm}
we provided conditions on $\nabla_\vy r$, and other geometric
requirements so that $R$ satisfies the Bolker condition. Then, under
the slightly stronger conditions in Theorem \ref{thm:elliptic psido},
we show that $R^*R$ is an elliptic \psido. This theory was extended in
Definition \ref{stable_def} to provide conditions for stable
reconstruction. In section \ref{examples}, we gave examples in CST and
URT which fit our framework, and we used our theory to derive Sobolev
stability estimates which show that reconstruction is only mildly
ill-posed. Inversion methods were also provided. For example, we used
the uniform inversion methods of \cite{palamodov2012uniform} to derive
a closed-form inversion formula for a rotational CST geometry. Our
theory was later validated in section \ref{images} using simulated
image reconstructions, and we were able to demonstrate stable
reconstructions of image phantoms in a variety of imaging modalities.

The rotational CST modality introduced here shows promise in terms of
inversion stability, e.g., we proved global stability estimates in
Sobolev space (Corollary \ref{cor:Sobolev estimate}). Here, the model
is linear, which in the context of CST means the ray attenuation is
neglected \cite{rigaud20183d}. In further work, we aim to consider a
non-linear variant of $R$ in CST which accounts for ray attenuation.
Specifically, we will use the microlocal theory developed here to
analyze how the singularities of $f$ relate to those in the data in
the non-linear formulation.

%% file: main-sphere.bbl
\begin{thebibliography}{10}

\bibitem{agranovsky2011support}
M.~Agranovsky and P.~Kuchment.
\newblock The support theorem for the single radius spherical mean transform.
\newblock {\em Mem. Differential Equations Math. Phys.}, 52:1--16, 2011.

\bibitem{agranovsky1996injectivity}
M.~L. Agranovsky and E.~T. Quinto.
\newblock Injectivity sets for the {R}adon transform over circles and complete systems of radial functions.
\newblock {\em Journal of Functional Analysis}, 139(2):383--414, 1996.

\bibitem{ABKQ2013}
G.~Ambartsoumian, J.~Boman, V.~P. Krishnan, and E.~T. Quinto.
\newblock Microlocal analysis of an ultrasound transform with circular source and receiver trajectories.
\newblock In {\em Geometric analysis and integral geometry}, volume 598 of {\em Contemp. Math.}, pages 45--58. Amer. Math. Soc., Providence, RI, 2013.

\bibitem{AFKNQ:common-midpoint}
G.~Ambartsoumian, R.~Felea, V.~P. Krishnan, C.~Nolan, and E.~T. Quinto.
\newblock A class of singular {F}ourier integral operators in synthetic aperture radar imaging.
\newblock {\em J. Funct. Anal.}, 264(1):246--269, 2013.

\bibitem{ambartsoumian2010inversion}
G.~Ambartsoumian, R.~Gouia-Zarrad, and M.~A. Lewis.
\newblock Inversion of the circular {R}adon transform on an annulus.
\newblock {\em Inverse Problems}, 26(10):105015, 2010.

\bibitem{andersson1988determination}
L.-E. Andersson.
\newblock On the determination of a function from spherical averages.
\newblock {\em SIAM Journal on Mathematical Analysis}, 19(1):214--232, 1988.

\bibitem{Caday:SAR}
P.~Caday.
\newblock Cancellation of singularities for synthetic aperture radar.
\newblock {\em Inverse Problems}, 31(1):015002, 22, 2015.

\bibitem{cebeiro2021three}
J.~Cebeiro, C.~Tarpau, M.~A. Morvidone, D.~Rubio, and M.~K. Nguyen.
\newblock On a three-dimensional compton scattering tomography system with fixed source.
\newblock {\em Inverse Problems}, 37(5):054001, 2021.

\bibitem{cormack1980radon}
A.~Cormack and E.~T. Quinto.
\newblock A {R}adon transform on spheres through the origin in $\mathbb{R}^n$ and applications to the {D}arboux equation.
\newblock {\em Transactions of the American Mathematical Society}, 260(2):575--581, 1980.

\bibitem{Co1963}
A.~M. Cormack.
\newblock {Representation of a function by its line integrals with some radiological applications}.
\newblock {\em J. Appl. Physics}, 34(9):2722--2727, 1963.

\bibitem{duistermaat1996fourier}
J.~J. Duistermaat and L.~Hormander.
\newblock {\em {F}ourier integral operators}, volume~2.
\newblock Springer, 1996.

\bibitem{ehrhardt2014joint}
M.~J. Ehrhardt, K.~Thielemans, L.~Pizarro, D.~Atkinson, S.~Ourselin, B.~F. Hutton, and S.~R. Arridge.
\newblock Joint reconstruction of {PET}-{MRI} by exploiting structural similarity.
\newblock {\em Inverse Problems}, 31(1):015001, 2014.

\bibitem{ery}
A.~Erdelyi, W.~Magnus, R.~Oberhettinger, and T.~F.
\newblock {\em Higher Transcendental Functions}, volume~II.
\newblock McGraw-Hill, 1953.

\bibitem{felea2013microlocal}
R.~Felea, R.~Gaburro, and C.~J. Nolan.
\newblock Microlocal analysis of sar imaging of a dynamic reflectivity function.
\newblock {\em SIAM Journal on Mathematical Analysis}, 45(5):2767--2789, 2013.

\bibitem{GKQR:ma}
C.~Grathwohl, P.~Kunstmann, E.~T. Quinto, and A.~Rieder.
\newblock {Microlocal analysis of imaging operators for effective common offset seismic reconstruction}.
\newblock {\em Inverse Problems}, 34(11):114001 (24 pages), 2018.

\bibitem{Gu1975}
V.~Guillemin.
\newblock {Some remarks on integral geometry}.
\newblock Technical report, MIT, 1975.

\bibitem{GS1977}
V.~Guillemin and S.~Sternberg.
\newblock {\em {Geometric Asymptotics}}.
\newblock American Mathematical Society, Providence, RI, 1977.

\bibitem{haltmeier2017spherical}
M.~Haltmeier and S.~Moon.
\newblock The spherical {R}adon transform with centers on cylindrical surfaces.
\newblock {\em Journal of Mathematical Analysis and Applications}, 448(1):567--579, 2017.

\bibitem{AIRtools}
P.~C. Hansen and J.~S. J\o{rgensen}.
\newblock A{IR} {T}ools {II}: algebraic iterative reconstruction methods, improved implementation.
\newblock {\em Numer. Algorithms}, 79(1):107--137, 2018.

\bibitem{Ho1971}
L.~H{\"o}rmander.
\newblock {Fourier Integral Operators, I}.
\newblock {\em Acta Mathematica}, 127:79--183, 1971.

\bibitem{hormanderI}
L.~H{\"o}rmander.
\newblock {\em The analysis of linear partial differential operators. {I}}.
\newblock Classics in Mathematics. Springer-Verlag, Berlin, 2003.
\newblock Distribution theory and {F}ourier analysis, Reprint of the second (1990) edition [Springer, Berlin].

\bibitem{hormanderIII}
L.~H\"{o}rmander.
\newblock {\em The analysis of linear partial differential operators. {III}}.
\newblock Classics in Mathematics. Springer, Berlin, 2007.
\newblock Pseudo-differential operators, Reprint of the 1994 edition.

\bibitem{hormander}
L.~H\"{o}rmander.
\newblock {\em The analysis of linear partial differential operators. {IV}}.
\newblock Classics in Mathematics. Springer-Verlag, Berlin, 2009.
\newblock Fourier integral operators, Reprint of the 1994 edition.

\bibitem{john2004plane}
F.~John.
\newblock {\em Plane waves and spherical means applied to partial differential equations}.
\newblock Courier Corporation, 2004.

\bibitem{klein2003inverting}
J.~Klein.
\newblock Inverting the spherical {R}adon transform for physically meaningful functions.
\newblock {\em arXiv preprint math/0307348}, 2003.

\bibitem{KrQu2011}
V.~P. Krishnan and E.~T. Quinto.
\newblock Microlocal aspects of common offset synthetic aperture radar imaging.
\newblock {\em Inverse Probl. Imaging}, 5(3):659--674, 2011.

\bibitem{kunyansky2007explicit}
L.~A. Kunyansky.
\newblock Explicit inversion formulae for the spherical mean {R}adon transform.
\newblock {\em Inverse problems}, 23(1):373, 2007.

\bibitem{landweber1951iteration}
L.~Landweber.
\newblock An iteration formula for {F}redholm integral equations of the first kind.
\newblock {\em American journal of mathematics}, 73(3):615--624, 1951.

\bibitem{lee2012smooth}
J.~M. Lee.
\newblock {\em Introduction to Smooth Manifolds, Second Edition}.
\newblock Number 218 in Graduate Texts in Mathematics. Springer, 2013.

\bibitem{natterer}
F.~Natterer.
\newblock {\em {The mathematics of computerized tomography}}.
\newblock Classics in Mathematics. Society for Industrial and Applied Mathematics (SIAM), New York, 2001.

\bibitem{Nguyen-Pham}
L.~V. Nguyen and T.~A. Pham.
\newblock Microlocal analysis for spherical {R}adon transform: two nonstandard problems.
\newblock {\em Inverse Problems}, 35(7):074001, 15, 2019.

\bibitem{norton}
S.~J. Norton.
\newblock {Compton scattering tomography}.
\newblock {\em Journal of applied physics}, 76(4):2007--2015, 1994.

\bibitem{palamodov2012uniform}
V.~P. Palamodov.
\newblock A uniform reconstruction formula in integral geometry.
\newblock {\em Inverse Problems}, 28(6):065014, 2012.

\bibitem{quinto}
E.~T. Quinto.
\newblock {The dependence of the generalized {R}adon transform on defining measures}.
\newblock {\em Trans. Amer. Math. Soc.}, 257:331--346, 1980.

\bibitem{Q1983-rotation}
E.~T. Quinto.
\newblock {The invertibility of rotation invariant {R}adon transforms}.
\newblock {\em J. Math. Anal. Appl.}, 94:602--603, 1983.

\bibitem{Q1993mor}
E.~T. Quinto.
\newblock {Pompeiu transforms on geodesic spheres in real analytic manifolds}.
\newblock {\em Israel J. Math.}, 84:353--363, 1993.

\bibitem{Q2006:supp}
E.~T. Quinto.
\newblock {Support Theorems for the Spherical {R}adon Transform on Manifolds}.
\newblock {\em International Mathematics Research Notices}, 2006:1--17, 2006.
\newblock Article ID = 67205.

\bibitem{RigaudComptonSIIMS2017}
G.~Rigaud.
\newblock Compton scattering tomography: feature reconstruction and rotation-free modality.
\newblock {\em SIAM J. Imaging Sci.}, 10(4):2217--2249, 2017.

\bibitem{rigaud20183d}
G.~Rigaud and B.~N. Hahn.
\newblock 3{D} {C}ompton scattering imaging and contour reconstruction for a class of {R}adon transforms.
\newblock {\em Inverse Problems}, 34(7):075004, 2018.

\bibitem{rubin2002inversion}
B.~Rubin.
\newblock Inversion formulas for the spherical {R}adon transform and the generalized cosine transform.
\newblock {\em Advances in Applied Mathematics}, 29(3):471--497, 2002.

\bibitem{Rudin:FA}
W.~Rudin.
\newblock {\em Functional analysis}.
\newblock McGraw-Hill Book Co., New York, 1973.
\newblock McGraw-Hill Series in Higher Mathematics.

\bibitem{stefanov2004stability}
P.~Stefanov and G.~Uhlmann.
\newblock Stability estimates for the {X}-ray transform of tensor fields and boundary rigidity.
\newblock {\em Duke Mathematical Journal}, 123(2):445--467, 2004.

\bibitem{SU:SAR2013}
P.~Stefanov and G.~Uhlmann.
\newblock Is a curved flight path in {SAR} better than a straight one?
\newblock {\em SIAM J. Appl. Math.}, 73(4):1596--1612, 2013.

\bibitem{truong2019compton}
T.-T. Truong and M.~K. Nguyen.
\newblock Compton scatter tomography in annular domains.
\newblock {\em Inverse Problems}, 2019.

\bibitem{webber2024surface}
J.~Webber, S.~Holman, and E.~T. Quinto.
\newblock Surface of revolution {R}adon transforms with centers on generalized surfaces in rn.
\newblock {\em SIAM Journal on Mathematical Analysis}, 2024.

\bibitem{webber2019compton}
J.~Webber and E.~Miller.
\newblock {Compton scattering tomography in translational geometries}.
\newblock Technical report, Tufts University, 2019.

\bibitem{webber2024generalized}
J.~W. Webber.
\newblock Generalized {A}bel equations and applications to translation invariant {R}adon transforms.
\newblock {\em Journal of Inverse and Ill-posed Problems}, 32(4):835--857, 2024.

\bibitem{webberholman}
J.~W. Webber and S.~Holman.
\newblock Microlocal analysis of a spindle transform.
\newblock {\em Inverse Problems \& Imaging}, 13(2):231--261, 2019.

\bibitem{webber2023ellipsoidal}
J.~W. Webber, S.~Holman, and E.~T. Quinto.
\newblock Ellipsoidal and hyperbolic {R}adon transforms; microlocal properties and injectivity.
\newblock {\em Journal of Functional Analysis}, page 110056, 2023.

\bibitem{web}
J.~W. Webber and E.~L. Miller.
\newblock Bragg scattering tomography.
\newblock {\em Inverse Problems and Imaging}, 15(4):683--721, 2021.

\end{thebibliography}
